\renewcommand\normalsize{
   \@setfontsize\normalsize\@xpt\@xiipt
   \abovedisplayskip 2\p@ \@plus2\p@ \@minus4\p@           %10 2 5
   \abovedisplayshortskip \z@ \@plus3\p@                   %3
   \belowdisplayshortskip 2\p@ \@plus2\p@ \@minus4\p@      %6 3 3
   \belowdisplayskip \abovedisplayskip
   \let\@listi\@listI}
\theoremstyle{plain}
\newtheorem{theorem}{Theorem}[section]
\newtheorem{lemma}[theorem]{Lemma}
\newtheorem{corollary}[theorem]{Corollary}
\theoremstyle{definition}
\newtheorem{definition}[theorem]{Definition}
\theoremstyle{remark}
\newtheorem{remark}[theorem]{Remark}
\numberwithin{equation}{section}
\begin{document}
\title[Lie $n$-derivations of unital algebras with nontrivial idempotents]{Characterizations of Lie $n$-derivations of unital algebras with nontrivial idempotents}
\author[Y. Ding]{Yana Ding}
\address[Yana Ding]{Department of Mathematics, East China University of Science and Technology, Shanghai, P. R. China}
\email{dingyana@mail.ecust.edu.cn}
\author[J. Li]{Jiankui Li $^\ast$$^\dag$}
\address[Jiankui Li]{Department of Mathematics, East China University of Science and Technology, Shanghai, P. R. China}
\email{jiankuili@yahoo.com}
\thanks{$^\ast$Corresponding author. E-mail address: jiankuili@yahoo.com}
\thanks{$^\dag$This research was partially supported by National Natural Science Foundation of China (Grant No. 11371136).}
\begin{abstract}
Let $\mathcal{A}$ be a unital algebra with a nontrivial idempotent $e$, and $f=1-e$. Suppose that $\mathcal{A}$ satisfies that $exe\cdot e\mathcal{A}f=\{0\}=f\mathcal{A}e\cdot exe$ implies $exe=0$ and $e\mathcal{A}f\cdot fxf=\{0\}=fxf\cdot f\mathcal{A}e$ implies $fxf=0$ for each $x$ in $\mathcal{A}$. We obtain the (necessary and) sufficient conditions for a Lie $n$-derivation $\varphi$ on $\mathcal{A}$ to be of the form $\varphi=d+\delta+\gamma$, where $d$ is a derivation on $\mathcal{A}$, $\delta$ is a singular Jordan derivation on $\mathcal{A}$ and $\gamma$ is a linear mapping from $\mathcal{A}$ into the centre $\mathcal{Z}(\mathcal{A})$ vanishing on all $(n-1)-$th commutators of $\mathcal{A}$. In particular, we also discuss the (necessary and) sufficient conditions for a Lie $n$-derivation $\varphi$ on $\mathcal{A}$ to be standard, i.e., $\varphi=d+\gamma$.\\
\textbf{Keywords:}  Lie derivation, Lie $n$-derivation, Lie triple derivation, singular Jordan derivation, standard.  \\
\textbf{MSC(2010):}  16W25, 47B47.
\end{abstract}
\maketitle

\section{\bf Introduction}

\indent Let $\mathcal{A}$ be a unital algebra over a unital commutative ring $\mathcal{R}$. The algebra $\mathcal{A}$ is call to be \emph{$n$-torsion free} if $nx=0$ implies $x=0$ for some positive integer $n$ and each $x$ in $\mathcal{A}$, and is call to be \emph{torsion-free} if $nx=0$ implies $x=0$ for each positive integer $n$ and each $x$ in $\mathcal{A}$. A linear mapping $\delta$ on $\mathcal{A}$ is called a \emph{derivation} if $\delta(xy)=\delta(x)y+x\delta(y)$ for each $x,y$ in $\mathcal{A}$, is called a \emph{Jordan derivation} if $\delta(x\circ y)=\delta(x)\circ y+x\circ\delta(y)$ for each $x,y$ in $\mathcal{A}$, is called a \emph{Lie derivation} if $\delta([x,y])=[\delta(x),y]+[x,\delta(y)]$ for each $x,y$ in $\mathcal{A}$, and is called a \emph{Lie triple derivation} if $\delta([[x,y],z])=[[\delta(x),y],z]+[[x,\delta(y)],z]+[[x,y],\delta(z)]$ for each $x,y,z$ in $\mathcal{A}$, where $x\circ y=xy+yx$ and $[x,y]=xy-yx$ for each $x,y$ in $\mathcal{A}$. A derivation $\delta$ is called an \emph{inner derivation} if there exists some $a$ in $\mathcal{A}$ such that $\delta(x)=ax-xa$ for each $x$ in $\mathcal{A}$. Now we define a sequence of polynomials as follows:
\begin{align*}%\label{p_n}
  p_1(x_1)&=x_1, \\
  p_n(x_1,x_2,...,x_n)&=[p_{n-1}(x_1,x_2,...,x_{n-1}),x_n]
\end{align*}
\noindent for each $x_1,x_2,...,x_n\in\mathcal{A}$ and each positive integer $n\geq 2$. Thus, $p_2(x_1,x_2)=[x_1,x_2]$ and $p_3(x_1,x_2,x_3)=[[x_1,x_2],x_3]$. For $n\geq 2$, $p_n(x_1,x_2,...,x_n)=[...[[x_1,x_2],x_3],...,x_n]$ is also called an \emph{$(n-1)-$}th \emph{commutator} of $x_1,x_2,...,x_n\in\mathcal{A}$.
%We denote $p_n(\mathcal{A},\mathcal{A},...,\mathcal{A})$ as the subset of all $(n-1)-$th commutators of $\mathcal{A}$.
A linear mapping $\delta$ on $\mathcal{A}$ is called a \emph{Lie $n$-derivation} ($n\geq 2$) if
$$\delta(p_n(x_1,x_2,...,x_n))=\sum_{i=1}^{n}p_n(x_1,...,x_{i-1},\delta(x_i),x_{i+1},...,x_n)$$
\noindent for each $x_1,x_2,...,x_n\in\mathcal{A}$. Thus, $\delta$ is a Lie derivation when $n=2$, and is a Lie triple derivation when $n=3$. The notion of Lie $n$-derivations is firstly proposed by Abdullaev in \cite{Abdullaev}, where describe the form of Lie n-derivations of a certain von Neumann algebra (or of its skew-adjoint part). A Lie $n$-derivation $\delta$ on $\mathcal{A}$ is called to be \emph{standard} if $\delta=h+\tau$, where $h$ is a derivation on $\mathcal{A}$ and $\tau$ is a linear mapping from $\mathcal{A}$ into its centre $\mathcal{Z}(\mathcal{A})$ vanishing on all $(n-1)-$th commutators of $\mathcal{A}$.\\

\indent Let $e$ be a nontrivial idempotent in $\mathcal{A}$, and $f=1-e$. Then $\mathcal{A}$ can be represented in the so called Pierce decomposition form
\begin{equation}\label{1.5}
  \mathcal{A}=e\mathcal{A}e+e\mathcal{A}f+f\mathcal{A}e+f\mathcal{A}f
\end{equation}
where $e\mathcal{A}e$ is a subalgebra with unit $e$, $f\mathcal{A}f$ is a subalgebra with unit $f$, $e\mathcal{A}f$ is an $(e\mathcal{A}e,f\mathcal{A}f)$-bimodule, and $f\mathcal{A}e$ is an $(f\mathcal{A}f,e\mathcal{A}e)$-bimodule. In this paper, we study the conditions under which a Lie $n$-derivation on $\mathcal{A}$ is standard. Benkovi\v c and \v Sirovnik \cite{Benkovic-Sirovnik} consider Jordan derivations on unital algebras with nontrivial idempotents, and introduce the notion of singular Jordan derivations which comes out to be very important in study of mappings on such algebras. Benkovi\v c \cite{Benkovic} shows several sufficient (and necessary) conditions for a Lie triple derivation on $\mathcal{A}$ to be expressed as the sum of a derivation, a singular Jordan derivation and a linear mapping from $\mathcal{A}$ into the centre $\mathcal{Z}(\mathcal{A})$ vanishing on all second commutators of $\mathcal{A}$. Wang \cite{Wang} discusses the sufficient conditions for a Lie $n$-derivation on $\mathcal{A}$ to be expressed as the sum of a derivation, a singular Jordan derivation and a linear mapping from $\mathcal{A}$ into the centre $\mathcal{Z}(\mathcal{A})$ vanishing on all $(n-1)-$th commutators of $\mathcal{A}$. It is worth to mention that $\mathcal{A}$ is isomorphic to a generalized matrix algebra $\mathcal{G}=(A, M, N, B)$ (which is first introduced by Morita in \cite{Morita}), where $A$ and $B$ are two unital algebras, and $_AM_B$ and $_BN_A$ are two bimodules. Many papers discuss mappings on generalized matrix algebras such as \cite{Du-Wang-bider, Du-Wang-lieder, Li-Wei, Li-Wyk-Wei, Mokhtari-Vishki, Wang-Wang, Xiao-Wei}. With a quite common assumption that the bimodule $_AM_B$ is faithful which means that $aM=0$ implies $a=0$ for each $a\in A$ and that $Mb=0$ implies $b=0$ for each $b\in B$, the authors \cite{Mokhtari-Vishki, Wang-Wang} obtain sufficient conditions for Lie derivations and Lie $n$-derivations on generalized matrix algebras to be standard. In this paper, we consider a milder assumption which arises from \cite{Benkovic} that the Pierce decomposition (\ref{1.5}) satisfies
\begin{equation}\label{1.6}
\begin{array}{llll}
  exe\cdot e\mathcal{A}f=\{0\}=f\mathcal{A}e\cdot exe &\textrm{implies}&  exe=0 & \textrm{and}  \\
  e\mathcal{A}f\cdot fxf=\{0\}=fxf\cdot f\mathcal{A}e &\textrm{implies}&  fxf=0
\end{array}
\end{equation}
for each $x$ in $\mathcal{A}$. Important examples of unital algebras with nontrivial idempotents satisfying the property (\ref{1.6}) include triangular algebras, matrix algebras, algebras of all bounded linear operators of Banach space and prime algebras with nontrivial idempotents. \\

This paper is organized as follows. In Section \ref{S}, we consider $\mathcal{A}$ as a unital algebra with a nontrivial idempotent $e$ satisfying the property (\ref{1.6}). We discuss the (necessary and) sufficient conditions for a Lie $n$-derivation $\varphi$ on $\mathcal{A}$ to be of the form $\varphi=d+\delta+\gamma$, where $d$ is a derivation on $\mathcal{A}$, $\delta$ is a singular Jordan derivation on $\mathcal{A}$ and $\gamma$ is a linear mapping from $\mathcal{A}$ into the centre $\mathcal{Z}(\mathcal{A})$ vanishing on all $(n-1)-$th commutators of $\mathcal{A}$, which improve the corresponding main results in \cite{Benkovic, Wang}.
%In particular, we discuss in Corollaries \ref{S_5} and \ref{S_6} that the sufficient and necessary conditions for Lie $n$-derivations to be standard. In Corollaries \ref{S_14}, \ref{S_15} and \ref{S_16}, we give the sufficient conditions under which every Lie $n$-derivation can be standard.
In particular, we also discuss the (necessary and) sufficient conditions for Lie $n$-derivations to be standard.
% (Corollaries \ref{S_5} - \ref{S_6} and \ref{S_14} - \ref{S_16}).
%In Corollaries \ref{S_14} - \ref{S_16}, we give the sufficient conditions under which every Lie $n$-derivation can be standard.\\

In Section \ref{A}, as applications of the results in Section \ref{S}, we characterize Lie $n$-derivations on matrix algebras, triangular algebras, unital prime algebras with nontrivial idempotents and von Neumann algebras.\\

\section{\bf Main Results}\label{S}
In this section, we assume that $\mathcal{A}$ is a unital algebra with a nontrivial idempotent $e$. % satisfying the property (\ref{1.6}).
By the Pierce decomposition (\ref{1.5}), $\mathcal{A}$ can be represented as $\mathcal{A}=e\mathcal{A}e+e\mathcal{A}f+f\mathcal{A}e+f\mathcal{A}f$, where $f=1-e$. In \cite{Benkovic-Sirovnik}, Benkovi\v c and \v Sirovnik introduce the term \emph{singular Jordan derivations}, which turns out to play an important role in the study of mappings on unital algebras with nontrivial idempotents.

\begin{definition}
A Jordan derivation $\delta$ on $\mathcal{A}$ is a \emph{singular Jordan derivation} if
\begin{equation}\label{S.1}
\begin{array}{lllll}
  \delta(e\mathcal{A}e)=\{0\}, & \delta(f\mathcal{A}f)=\{0\}, & \delta(e\mathcal{A}f)\subseteq f\mathcal{A}e & \textrm{and} & \delta(f\mathcal{A}e)\subseteq e\mathcal{A}f.
\end{array}
\end{equation}
\end{definition}
\noindent It's obvious that singular Jordan derivations on $\mathcal{A}$ is zero when $\mathcal{A}$ is a triangular algebra, since $f\mathcal{A}e=\{0\}$.

\begin{lemma}\cite[Proposition 2.1 and Remark 2.2]{Benkovic}\label{S_2}
%If $\mathcal{A}$ is a unital algebra with a nontrivial idempotent $e$ satisfying the property (\ref{1.6}), then
If $\mathcal{A}$ satisfies the property (\ref{1.6}), then
\begin{description}
\item[(i)] $\mathcal{Z}(\mathcal{A})=\left\{\begin{array}{l|l}
                            a+b & \begin{array}{l}
                                    a\in e\mathcal{A}e,~b\in f\mathcal{A}f,\\
                                    am=mb,~ta=bt~for~each~m\in e\mathcal{A}f~and~t\in f\mathcal{A}e
                                    \end{array}
                            \end{array}\right\}.$
\item[(ii)] There exists a unique algebra isomorphism $\tau$ from $e\mathcal{Z}(\mathcal{A})e$ to $f\mathcal{Z}(\mathcal{A})f$, such that for each $a\in e\mathcal{Z}(\mathcal{A})e$ we have that $am=m\tau(a)$ and $ta=\tau(a)t$ for each $m\in e\mathcal{A}f$ and $t\in f\mathcal{A}e$. \\
(Thus, $a+\tau(a)\in\mathcal{Z}(\mathcal{A})$ for each $a\in e\mathcal{Z}(\mathcal{A})e$ and $\tau^{-1}(b)+b\in\mathcal{Z}(\mathcal{A})$ for each $b\in f\mathcal{Z}(\mathcal{A})f$.)
\item[(iii)] For $x\in\mathcal{A}$, if $[x,e\mathcal{A}f]=\{0\}$ and $[x,f\mathcal{A}e]=\{0\}$, then $exe+fxf\in\mathcal{Z}(\mathcal{A})$.
\end{description}
\end{lemma}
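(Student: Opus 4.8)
The plan is to establish all three parts by a direct analysis of the Pierce components, with property (\ref{1.6}) supplying the one nonroutine step. For the inclusion $\subseteq$ in (i), I would take $z\in\mathcal{Z}(\mathcal{A})$, write $z=eze+ezf+fze+fzf$, and compare Pierce components in the identity $ze=ez$. Since $ze=eze+fze$ and $ez=eze+ezf$, this forces $ezf=fze=0$, so $z=a+b$ with $a=eze\in e\mathcal{A}e$ and $b=fzf\in f\mathcal{A}f$. Testing the centrality of $z$ against an arbitrary $m\in e\mathcal{A}f$ gives $zm=am$ and $mz=mb$, hence $am=mb$; similarly, testing against $t\in f\mathcal{A}e$ gives $ta=bt$. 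This is exactly the description on the right-hand side.

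The reverse inclusion is where the real content lies, and I expect it to be the main obstacle. Suppose $z=a+b$ with $a\in e\mathcal{A}e$, $b\in f\mathcal{A}f$, $am=mb$ and $ta=bt$ for all $m\in e\mathcal{A}f$, $t\in f\mathcal{A}e$. It is immediate that $z$ commutes with $e$, with $e\mathcal{A}f$ (as $zm=am=mb=mz$), and with $f\mathcal{A}e$. The difficulty is commutation with $e\mathcal{A}e$ and $f\mathcal{A}f$, and this is precisely what property (\ref{1.6}) is designed to handle. Fixing $x_{11}\in e\mathcal{A}e$ and putting $c=[z,x_{11}]=[a,x_{11}]\in e\mathcal{A}e$, I would show $c\cdot e\mathcal{A}f=\{0\}$ and $f\mathcal{A}e\cdot c=\{0\}$, so that (\ref{1.6}) yields $c=0$. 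For the first, given $m\in e\mathcal{A}f$ one applies the relation $am'=m'b$ to both $m'=m$ and $m'=x_{11}m$, obtaining $ax_{11}m=(x_{11}m)b=x_{11}(mb)=x_{11}am$, i.e. $cm=0$; the second is symmetric, using $ta=bt$ with $t$ and with $tx_{11}$. The analogous argument with the second half of (\ref{1.6}) shows $z$ commutes with $f\mathcal{A}f$, so $z\in\mathcal{Z}(\mathcal{A})$.

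Part (ii) then follows formally. Given $a\in e\mathcal{Z}(\mathcal{A})e$, by (i) there is some $b\in f\mathcal{A}f$ with $a+b\in\mathcal{Z}(\mathcal{A})$, and I would set $\tau(a)=b$. Well-definedness and injectivity both reduce to the observation that if $b-b'$ (resp. $a-a'$) lies in the centre with zero complementary component, then by (i) it is annihilated by $e\mathcal{A}f$ and $f\mathcal{A}e$ in the manner required by (\ref{1.6}), forcing it to vanish. Surjectivity is clear, as is $\tau(e)=f$ coming from $1=e+f\in\mathcal{Z}(\mathcal{A})$. Linearity is immediate, and multiplicativity follows since for central $z_i=a_i+b_i$ the cross terms $a_1b_2$ and $b_1a_2$ vanish, giving $z_1z_2=a_1a_2+b_1b_2$ central, whence $\tau(a_1a_2)=b_1b_2=\tau(a_1)\tau(a_2)$. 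The identities $am=m\tau(a)$ and $ta=\tau(a)t$ are just the defining relations from (i) applied to $z=a+\tau(a)$, and uniqueness follows because these relations force $a+\tau(a)\in\mathcal{Z}(\mathcal{A})$, pinning down $\tau(a)$.

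Finally, for (iii), write $a=exe$ and $b=fxf$. Expanding $xm=mx$ for $m\in e\mathcal{A}f$ and comparing Pierce components, the $e\mathcal{A}f$-components give precisely $am=mb$ (the remaining components yield auxiliary relations on $fxe$ that are not needed); expanding $xt=tx$ for $t\in f\mathcal{A}e$ likewise yields $ta=bt$. By part (i), $exe+fxf=a+b\in\mathcal{Z}(\mathcal{A})$, as claimed.
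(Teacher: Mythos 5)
The paper itself offers no proof of this lemma; it is imported verbatim from Benkovi\v{c} (Proposition 2.1 and Remark 2.2 of that paper), so there is no in-paper argument to compare against. Your self-contained proof is correct and is the natural one: the Pierce-component computation gives the inclusion $\subseteq$ in (i); for $\supseteq$ the key step of showing $[a,x_{11}]\cdot e\mathcal{A}f=\{0\}=f\mathcal{A}e\cdot[a,x_{11}]$ (and its $f\mathcal{A}f$ analogue) and then invoking property (\ref{1.6}) is exactly what that hypothesis is for, and your verifications $ax_{11}m=(x_{11}m)b=x_{11}(am)$ and $t(ax_{11})=b(tx_{11})=(tx_{11})a$ are sound; parts (ii) and (iii) then follow formally from (i) as you describe, with well-definedness, injectivity and uniqueness of $\tau$ all reducing to one more application of (\ref{1.6}).
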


\begin{remark}\label{S_3}
%Let $\mathcal{A}$ be a unital algebra with a nontrivial idempotent $e$ and
Let $\varphi$ be a Lie $n$-derivation on $\mathcal{A}$. Similar to the proofs of \cite[Lemma 3.1]{Benkovic} and \cite[Theorem 2.1]{Wang}, we can assume that $\varphi$ satisfies $e\varphi(e)f=f\varphi(e)e=0$. Actually, let $x_0=e\varphi(e)f-f\varphi(e)e$ and $d$ be an inner derivation on $\mathcal{A}$ that $d(x)=[x,x_0]$ for each $x$ in $\mathcal{A}$. Clearly $\varphi'=\varphi-d$ is also a Lie $n$-derivation. Since
\begin{align*}
  \varphi'(e)=&\varphi(e)-[e,e\varphi(e)f-f\varphi(e)e]\\
             =&\varphi(e)-e\varphi(e)f-f\varphi(e)e\\
             =&e\varphi(e)e+f\varphi(e)f,
\end{align*}
\noindent we obtain $e\varphi'(e)f=f\varphi'(e)e=0$. Thus, it suffices to consider the Lie $n$-derivation $\varphi$ on $\mathcal{A}$ satisfying $e\varphi(e)f=f\varphi(e)e=0$.
\end{remark}

\begin{theorem}\label{S_4}
%Let $\mathcal{A}$ be a unital algebra with a nontrivial idempotent $e$ satisfying the property (\ref{1.6}) and
Let $\varphi$ be a Lie $n$-derivation on $\mathcal{A}$. Suppose that $\mathcal{A}$ is $2$- and $(n-1)$-torsion free, and that $\mathcal{A}$ satisfies the property (\ref{1.6}). Then $\varphi$ is of the form
\begin{equation}\label{S.2}
  \varphi=d+\delta+\gamma
\end{equation}
\noindent where $d$ is a derivation on $\mathcal{A}$, $\delta$ is a singular Jordan derivation on $\mathcal{A}$ and $\gamma$ is a linear mapping from $\mathcal{A}$ into the centre $\mathcal{Z}(\mathcal{A})$ vanishing on all $(n-1)-$th commutators of $\mathcal{A}$, if and only if
\begin{description}
\item[(i)] $f\varphi(e\mathcal{A}e)f\subseteq f\mathcal{Z}(\mathcal{A})f$ and $e\varphi(f\mathcal{A}f)e\subseteq e\mathcal{Z}(\mathcal{A})e$,
\item[(ii)] $e\varphi(tm)e+f\varphi(mt)f\in\mathcal{Z}(\mathcal{A})$ for each $m\in e\mathcal{A}f$ and $t\in f\mathcal{A}e$.
\end{description}
\end{theorem}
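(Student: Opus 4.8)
The statement is a biconditional, so I would prove the two implications separately, treating the necessity as a block-by-block verification and reserving the real work for the sufficiency.

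For necessity, suppose $\varphi=d+\delta+\gamma$ as described. A short Leibniz computation shows that \emph{any} derivation satisfies $f\,d(e\mathcal{A}e)\,f=\{0\}$ and $e\,d(f\mathcal{A}f)\,e=\{0\}$: writing $a=eae$, expand $d(a)=d(e)a+ed(a)e+ad(e)$ and use $ef=fe=0$ together with $af=fa=0$. Since $\delta$ kills $e\mathcal{A}e$ and $f\mathcal{A}f$ by (\ref{S.1}) and $\gamma$ is $\mathcal{Z}(\mathcal{A})$-valued, condition (i) is immediate. For (ii), the same computation gives $e\,d(tm)\,e=\{0\}$ and $f\,d(mt)\,f=\{0\}$ for $m\in e\mathcal{A}f$, $t\in f\mathcal{A}e$ (note $tm\in f\mathcal{A}f$, $mt\in e\mathcal{A}e$), and $\delta(tm)=\delta(mt)=0$, so $e\varphi(tm)e+f\varphi(mt)f=e\gamma(tm)e+f\gamma(mt)f$. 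To finish I would exploit the grading role of $e$: since $[m,e]=-m$, the element $p_n(m,e,\dots,e,t)=(-1)^{n-2}(mt-tm)$ is an $(n-1)$-th commutator, so $\gamma$ annihilates it and $\gamma(mt)=\gamma(tm)$. As both values are central and hence have no off-diagonal Pierce components, $e\gamma(tm)e+f\gamma(mt)f=\gamma(tm)\in\mathcal{Z}(\mathcal{A})$, which is (ii).

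For sufficiency I would first invoke Remark \ref{S_3} to assume $e\varphi(e)f=f\varphi(e)e=0$. The next step, independent of (i)--(ii), is to extract the rigid structure forced by the Lie $n$-derivation identity together with the grading: applying $\varphi$ to $p_n(m,e,\dots,e)=(-1)^{n-1}m$ and comparing Pierce components yields $e\varphi(m)e=f\varphi(m)f=0$ and $[m,\varphi(e)]=0$ for every $m\in e\mathcal{A}f$ (here $(n-1)$-torsion freeness removes the factor $n-1$), and symmetrically on $f\mathcal{A}e$; by Lemma \ref{S_2}(iii) this already forces $\varphi(e)\in\mathcal{Z}(\mathcal{A})$. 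Guided by this, I would define $\delta$ to be the ``swap'' of the surviving off-diagonal parts, namely $\delta(eae)=\delta(fbf)=0$, $\delta(m)=f\varphi(m)e$, $\delta(t)=e\varphi(t)f$, which manifestly satisfies (\ref{S.1}); let $d$ carry the remaining off-diagonal data and the corner actions $e\varphi(\cdot)e$, $f\varphi(\cdot)f$; and set $\gamma=\varphi-d-\delta$ to absorb the central residue, whose very existence is what conditions (i) (for the diagonal blocks) and (ii) (for the products $tm$, $mt$) are designed to guarantee.

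The heart of the argument, and the step I expect to be the main obstacle, is verifying that the maps so defined have their claimed algebraic types: that $d$ is a genuine derivation, $\delta$ a Jordan derivation, and $\gamma$ central-valued and vanishing on all $(n-1)$-th commutators. Each is checked by feeding suitable tuples---idempotents together with one or two Pierce-homogeneous elements---into the Lie $n$-derivation identity, expanding through the grading $[x,e]$, and reading off components, with the torsion hypotheses cancelling the coefficients $2$ and $n-1$ that appear. The genuinely delicate points are the diagonal blocks: to see that $\varphi$ restricts to a derivation of the corner algebra $e\mathcal{A}e$ (and $f\mathcal{A}f$) one cannot read it off directly but must show that $d(ab)-d(a)b-ad(b)$ annihilates $e\mathcal{A}f$ on the right and $f\mathcal{A}e$ on the left, whence it vanishes by the faithfulness hypothesis (\ref{1.6}); and cleanly separating the central part relies on Lemma \ref{S_2}(i)--(ii), using that a diagonal element is central exactly when its two corners correspond under the isomorphism $\tau$. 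Conditions (i) and (ii) enter precisely here, ensuring the would-be central residue really lands in $\mathcal{Z}(\mathcal{A})$ rather than merely in $e\mathcal{A}e+f\mathcal{A}f$.
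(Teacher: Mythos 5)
Your proposal is correct and follows essentially the same route as the paper: the necessity computation is the same in substance (the paper packages $d+\delta$ as a single Jordan derivation and uses $2$-torsion freeness, where you apply Leibniz to $d$ directly and use $\delta(e\mathcal{A}e)=\delta(f\mathcal{A}f)=\{0\}$, but the conclusion and the use of $\gamma(mt)=\gamma(tm)$ are identical), and the sufficiency direction uses exactly the paper's construction of $\delta$, $d$ and $\gamma$, with the same roles for property (\ref{1.6}), Lemma \ref{S_2} and the torsion hypotheses. The one point to make precise is that on the diagonal corners $d$ must be defined as $d(a)=e\varphi(a)e-\tau^{-1}(f\varphi(a)f)$ and $d(b)=f\varphi(b)f-\tau(e\varphi(b)e)$ rather than the bare corner actions, which is exactly the $\tau$-correction you allude to when you say the central residue is separated via Lemma \ref{S_2}(i)--(ii).
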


\begin{proof}
Suppose that $\varphi$ is of the form (\ref{S.2}) $\varphi=d+\delta+\gamma$. Let $\delta'=d+\delta$, then $\delta'$ is a Jordan derivation and
\begin{align}
  2\delta'(a)=&\delta'(e\circ a)=\delta'(e)a+a\delta'(e)+e\delta'(a)+\delta'(a)e  \label{S.3}, \\
  2\delta'(b)=&\delta'(f\circ b)=\delta'(f)b+b\delta'(f)+f\delta'(b)+\delta'(b)f  \label{S.4},
\end{align}
for each $a$ in $e\mathcal{A}e$ and $b$ in $f\mathcal{A}f$. Since $\mathcal{A}$ is $2$-torsion free, left and right multiplication of (\ref{S.3}) by $f$ implies that $f\delta'(a)f=0$ for each $a$ in $e\mathcal{A}e$, and left and right multiplication of (\ref{S.4}) by $e$ implies that $e\delta'(b)e=0$ for each $b$ in $f\mathcal{A}f$. Since $\gamma(\mathcal{A})\subseteq\mathcal{Z}(\mathcal{A})$, we have that
\begin{align*}
  f\varphi(a)f=&f\delta'(a)f+f\gamma(a)f=f\gamma(a)f\in f\mathcal{Z}(\mathcal{A})f,  \\
  e\varphi(b)e=&e\delta'(b)e+e\gamma(b)e=e\gamma(b)e\in e\mathcal{Z}(\mathcal{A})e,
\end{align*}
for each $a$ in $e\mathcal{A}e$ and $b$ in $f\mathcal{A}f$. Hence, (i) holds. For each $m$ in $e\mathcal{A}f$ and $t$ in $f\mathcal{A}e$, we have $$p_n(t,e,...,e,m)=p_{n-1}(t,e,...,e,m)=...=[t,m]=tm-mt.$$ Since $\gamma$ vanishes on all $(n-1)-$th commutators of $\mathcal{A}$, we have that $\gamma(tm-mt)=0$. We may as well assume that $\gamma(mt)=\gamma(tm)=a_0+b_0\in\mathcal{Z}(\mathcal{A})$ where $a_0\in e\mathcal{Z}(\mathcal{A})e$ and $b_0\in \mathcal{Z}(\mathcal{A})f$. Since $mt\in e\mathcal{A}e$, $tm\in f\mathcal{A}f$, we have that
\begin{align}
  \varphi(mt)=&d(mt)+\delta(mt)+\gamma(mt)=d(m)t+md(t)+a_0+b_0,  \label{S.5}\\
  \varphi(tm)=&d(tm)+\delta(tm)+\gamma(tm)=d(t)m+td(m)+a_0+b_0.  \label{S.6}
\end{align}
Left and right multiplication of (\ref{S.5}) by $f$ implies that $f\varphi(mt)f=b_0$, and left and right multiplication of (\ref{S.6}) by $e$ implies that $e\varphi(tm)e=a_0$. Thus, $e\varphi(tm)e+f\varphi(mt)f=a_0+b_0\in\mathcal{Z}(\mathcal{A})$, (ii) holds.\\

\indent Suppose that (i) and (ii) hold. According to Remark \ref{S_3}, it suffices to consider Lie $n$-derivation $\varphi$ on $\mathcal{A}$ satisfying $e\varphi(e)f=f\varphi(e)e=0$. Thus, $\varphi(e)=e\varphi(e)e+f\varphi(e)f$. We organize the following proof by a series of claims.\\

\noindent\ \textbf{Claim 1.}\indent For each $x\in\mathcal{A}$, we have $p_n(x,e,...,e)=(-1)^{n-1}exf+fxe$ and $p_n(x,f,...,f)=exf+(-1)^{n-1}fxe$.\\

It's obvious that $$p_n(x,e,...,e)=p_{n-1}([x,e],e,...,e)=p_{n-1}(-exf+fxe,e,...,e)=...=(-1)^{n-1}exf+fxe.$$ The case of $p_n(x,f,...,f)$ could be similarly proved.\\

\noindent$\begin{array}{lll}\textbf{Claim 2.}
            &\varphi(a)=e\varphi(a)e+f\varphi(a)f & \textrm{for each $a\in e\mathcal{A}e$,}\\
            &\varphi(b)=e\varphi(b)e+f\varphi(b)f & \textrm{for each $b\in f\mathcal{A}f$,}\\
            &\varphi(m)=e\varphi(m)f+f\varphi(m)e & \textrm{for each $m\in e\mathcal{A}f$,}\\
            &\varphi(t)=e\varphi(t)f+f\varphi(t)e & \textrm{for each $t\in f\mathcal{A}e$.}
            \end{array}$

For each $a$ in $e\mathcal{A}e$, since $[a,e]=0$ and $p_n(a,e,...,e)=0$, according to Claim 1, we have that
\begin{align*}
  0=&\varphi(p_n(a,e,...,e))\\
   =&p_n(\varphi(a),e,...,e)+p_n(a,\varphi(e),...,e)+\sum_{j=3}^{n}p_n(a,e,...,\varphi(e),...,e)\\
   =&(-1)^{n-1}e\varphi(a)f+f\varphi(a)e+(-1)^{n-2}e[a,\varphi(e)]f+f[a,\varphi(e)]e\\
   =&(-1)^{n-1}e\varphi(a)f+f\varphi(a)e.
\end{align*}
Left and right multiplying by $e$ and $f$ respectively in the above equations, we obtain that
\begin{equation}\label{S.7}
  e\varphi(a)f=f\varphi(a)e=0.
\end{equation}
Thus, $\varphi(a)\!=\!e\varphi(a)e+f\varphi(a)f$. For each $b$ in $f\mathcal{A}f$, since $[b,f]=0$ and $p_n(b,f,...,f)=0$, we can similarly prove that
\begin{equation}\label{S.8}
  e\varphi(b)f=f\varphi(b)e=0
\end{equation}
and $\varphi(b)=e\varphi(b)e+f\varphi(b)f$. For each $m$ in $e\mathcal{A}f$, according to Claim 1, we have that $p_n(m,e,...,e)=(-1)^{n-1}m$ and
\begin{align*}
  (-1)^{n-1}\varphi(m)=&\varphi(p_n(m,e,...,e))\\
   =&p_n(\varphi(m),e,...,e)+\sum_{j=2}^{n}p_n(m,e,...,\varphi(e),...,e)\\
   =&(-1)^{n-1}e\varphi(m)f+f\varphi(m)e+\sum_{j=2}^{n}(-1)^{j-2}p_{n-j+2}(m,\varphi(e),e,...,e)\\
   =&(-1)^{n-1}e\varphi(m)f+f\varphi(m)e+\sum_{j=2}^{n}(-1)^{j-2}(-1)^{n-j}[m,\varphi(e)]\\
   =&(-1)^{n-1}e\varphi(m)f+f\varphi(m)e+(-1)^{n-2}(n-1)[m,\varphi(e)].
\end{align*}
Left and right multiplying by $e$ and $f$ respectively in the above equations, under the assumption that $\mathcal{A}$ is $(n-1)$-torsion free, we obtain that
\begin{align}
  &e\varphi(m)e=f\varphi(m)f=0,\notag\\ %\tag{..}
  &f\varphi(m)e=(-1)^{n-1}f\varphi(m)e,\label{S.9-2}\\
  &[m,\varphi(e)]=0.\label{S.9-3}
\end{align}
Thus, $\varphi(m)=e\varphi(m)f+f\varphi(m)e$. For each $t$ in $f\mathcal{A}e$, by Claim 1, we have $p_n(t,e,...,e)=t$. We can similarly prove that $$\varphi(t)=(-1)^{n-1}e\varphi(t)f+f\varphi(t)e+(n-1)[t,\varphi(e)]$$ and
\begin{align}
  &e\varphi(t)e=f\varphi(t)f=0,\notag\\ %tag{...}
  &e\varphi(t)f=(-1)^{n-1}e\varphi(t)f,\label{S.10-2}\\
  &[t,\varphi(e)]=0.\label{S.10-3}
\end{align}
Thus, $\varphi(t)=e\varphi(t)f+f\varphi(t)e$.

According to Lemma \ref{S_2}, there exists a unique algebra isomorphism $\tau$ from $e\mathcal{Z}(\mathcal{A})e$ to $f\mathcal{Z}(\mathcal{A})f$ such that for each $a\in e\mathcal{Z}(\mathcal{A})e$ we have that $am=m\tau(a)$ and $ta=\tau(a)t$ for each $m\in e\mathcal{A}f$ and $t\in f\mathcal{A}e$. For each $a\in e\mathcal{A}e$, $m\in e\mathcal{A}f$, $t\in f\mathcal{A}e$ and $b\in f\mathcal{A}f$, we define a linear mapping $d$ on $\mathcal{A}$ as follows:
\begin{equation}\label{S.11}
  \begin{array}{lllll}
  d(a)\!=\!e\varphi(a)e\!-\!\tau^{-1}(f\varphi(a)f),\!&\!d(b)\!=\!f\varphi(b)f\!-\!\tau(e\varphi(b)e),\!&\!d(m)\!=\!e\varphi(m)f\!&\! \!\textrm{and}\! \!&\!d(t)\!=\!f\varphi(t)e,
  \end{array}
\end{equation}
and a linear mapping $\delta$ on $\mathcal{A}$ as follows:
\begin{equation}\label{S.12}
  \begin{array}{lllll}
  \delta(a)\!=\!0,&\delta(b)\!=\!0,&\delta(m)\!=\!f\varphi(m)e& \textrm{and} &\delta(t)\!=\!e\varphi(t)f.
  \end{array}
\end{equation}
\noindent  Denote $\gamma=\varphi-d-\delta$. Then $\gamma$ is a linear mapping satisfying
\begin{equation}\label{S.13}
  \begin{array}{lllll}
  \gamma(a)\!=\!\tau^{-1}(f\varphi(a)f)\!+\!f\varphi(a)f,\!&\!\gamma(b)\!=\!e\varphi(b)e\!+\!\tau(e\varphi(b)e),\!&\!\gamma(m)\!=\!0\!&\! \!\textrm{and}\! \!&\!\gamma(t)\!=\!0
  \end{array}
\end{equation}
\noindent for each $a\in e\mathcal{A}e$, $m\in e\mathcal{A}f$, $t\in f\mathcal{A}e$ and $b\in f\mathcal{A}f$.
By (i) and Lemma \ref{S_2}, $\gamma$ maps $\mathcal{A}$ into $\mathcal{Z}(\mathcal{A})$.\\

\noindent$\begin{array}{lll}\textbf{Claim 3}
            &\varphi(am)=[\varphi(a),m]+a\circ\varphi(m) & for~each~a\in e\mathcal{A}e, m\in e\mathcal{A}f,\\
            &\varphi(mb)=[m,\varphi(b)]+\varphi(m)\circ b & for~each~b\in f\mathcal{A}f, m\in e\mathcal{A}f,\\
            &\varphi(ta)=[t,\varphi(a)]+\varphi(t)\circ a & for~each~a\in e\mathcal{A}e, t\in f\mathcal{A}e,\\
            &\varphi(bt)=[\varphi(b),t]+b\circ\varphi(t) & for~each~b\in f\mathcal{A}f, t\in f\mathcal{A}e,\\
            &\varphi(mt)-\varphi(tm)=[\varphi(m),t]+[m,\varphi(t)] & for~each~m\in e\mathcal{A}f, t\in f\mathcal{A}e,\\
            &(-1)^{n-2}[\varphi(m_1),m_2]+[m_1,\varphi(m_2)]=0 & for~each~m_1,m_2\in e\mathcal{A}f,\\
            &(-1)^{n-2}[\varphi(t_1),t_2]+[t_1,\varphi(t_2)]=0 & for~each~t_1,t_2\in f\mathcal{A}e.
            \end{array}$

For each $a$ in $e\mathcal{A}e$ and $m$ in $e\mathcal{A}f$, since $[a,m]=am\in e\mathcal{A}f$, according to Claim 1, we have that $p_n(a,m,e,...,e)=(-1)^{n-2}am$ and
\begin{align*}
  (-1)^{n-2}\varphi(am)=&\varphi(p_n(a,m,e,...,e))\\
   =&p_n(\varphi(a),m,e,...,e)+p_n(a,\varphi(m),e,...,e)+\sum_{j=3}^{n}p_n(a,m,e,...,\varphi(e),...,e)\\
   =&(-1)^{n-2}e[\varphi(a),m]f+(-1)^{n-2}e[a,\varphi(m)]f+f[a,\varphi(m)]e\\
   &+\sum_{j=3}^{n}(-1)^{j-3}p_{n-j+3}(a,m,\varphi(e),e,...,e)\\
   =&(-1)^{n-2}[\varphi(a),m]+(-1)^{n-2}a\varphi(m)-\varphi(m)a+(-1)^{n-3}(n-2)[am,\varphi(e)].
\end{align*}
Left and right multiplying by $e$ and $f$ respectively in the above equations, we obtain that
\begin{align*}
  f\varphi(am)e&=(-1)^{n-1}\varphi(m)a,\\
  e\varphi(am)f&=[\varphi(a),m]+a\varphi(m)-(n-2)[am,\varphi(e)].
\end{align*}
Associating with (\ref{S.9-2}) and (\ref{S.9-3}), we have $[am,\varphi(e)]=0$ and $f\varphi(am)e=(-1)^{n-1}f\varphi(m)e\cdot eae=f\varphi(m)e\cdot eae=\varphi(m)a$. Thus, $$\varphi(am)=e\varphi(am)f+f\varphi(am)e=[\varphi(a),m]+a\circ\varphi(m)$$
for each $a$ in $e\mathcal{A}e$ and $m$ in $e\mathcal{A}f$.
Let's make similar discussions on $mb$, $ta$ and $bt$. For each $a$ in $e\mathcal{A}e$, $m$ in $e\mathcal{A}f$, $t$ in $f\mathcal{A}e$ and $b$ in $f\mathcal{A}f$, since $[m,b]=mb\in e\mathcal{A}f$, $[t,a]=ta\in f\mathcal{A}e$ and $[b,t]=bt\in f\mathcal{A}e$, we have that
\begin{equation*}
  \begin{array}{llll}
  p_n(m,b,e,...,e)=(-1)^{n-2}mb, & p_n(t,a,e,...,e)=ta & \textrm{and} & p_n(b,t,e,...,e)=bt.
  \end{array}
\end{equation*}
It follows that
\begin{align*}
  (-1)^{n-2}\varphi(mb)&=(-1)^{n-2}\varphi(m)b-b\varphi(m)+(-1)^{n-2}[m,\varphi(b)]+(-1)^{n-3}(n-2)[mb,\varphi(e)],\\
   \varphi(ta)&=-(-1)^{n-2}a\varphi(t)+\varphi(t)a+[t,\varphi(a)]+(n-2)[ta,\varphi(e)],\\
   \varphi(bt)&=[\varphi(b),t]-(-1)^{n-2}\varphi(t)b+b\varphi(t)+(n-2)[bt,\varphi(e)].
\end{align*}
Left and right multiplying by $e$ and $f$ respectively in the above equations, and associating with (\ref{S.9-2}), (\ref{S.9-3}), (\ref{S.10-2}) and (\ref{S.10-3}), we obtain that
  \begin{align*}
    &[mb,\varphi(e)]=[ta,\varphi(e)]=[bt,\varphi(e)]=0, \\
    &f\varphi(mb)e=b\varphi(m),\ e\varphi(ta)f=a\varphi(t)\ and\ e\varphi(bt)f=\varphi(t)b,\\
    &\varphi(mb)=[m,\varphi(b)]+\varphi(m)\circ b,\ \varphi(ta)=[t,\varphi(a)]+\varphi(t)\circ a\ and\ \varphi(bt)=[\varphi(b),t]+b\circ\varphi(t)
  \end{align*}
for each $a$ in $e\mathcal{A}e$, $m$ in $e\mathcal{A}f$, $t$ in $f\mathcal{A}e$ and $b$ in $f\mathcal{A}f$.

When $n=2$, for each $m,m_1,m_2$ in $e\mathcal{A}f$ and $t,t_1,t_2$ in $f\mathcal{A}e$, it is obvious that
\begin{align*}
&\varphi(mt)-\varphi(tm)=\varphi([m,t])=[\varphi(m),t]+[m,\varphi(t)],\\
&(-1)^{n-2}[\varphi(m_1),m_2]+[m_1,\varphi(m_2)]=[\varphi(m_1),m_2]+[m_1,\varphi(m_2)]=\varphi([m_1,m_2])=0,\\
&(-1)^{n-2}[\varphi(t_1),t_2]+[t_1,\varphi(t_2)]=[\varphi(t_1),t_2]+[t_1,\varphi(t_2)]=\varphi([t_1,t_2])=0.
\end{align*}

If $n\geq 3$, for each $m$ in $e\mathcal{A}f$ and $t$ in $f\mathcal{A}e$, since
$p_n(m,e,...,e,t)=[p_{n-1}(m,e,...,e),t]=(-1)^{n-2}[m,t]$, then we have that
\begin{align*}
  (-1)^{n-2}(\varphi(mt)-\varphi(tm))=&\varphi(p_n(m,e,...,e,t))\\
   =&p_n(\varphi(m),e,...,e,t)+\sum_{j=2}^{n-1}p_n(m,e,...,\varphi(e),...,e,t)\\
   &+p_n(m,e,...,e,\varphi(t))\\
   =&[(-1)^{n-2}e\varphi(m)f+f\varphi(m)e,t]+(-1)^{n-3}(n-2)[[m,\varphi(e)],t]\\
   &+(-1)^{n-2}[m,\varphi(t)]\\
   =&(-1)^{n-2}[\varphi(m),t]+(-1)^{n-2}[m,\varphi(t)].
\end{align*}
For each $m_1,m_2$ in $e\mathcal{A}f$, since $p_n(m_1,e,...,e,m_2)=[p_{n-1}(m_1,e,...,e),m_2]=0$, we have that
\begin{align*}
  0=&\varphi(p_n(m_1,e,...,e,m_2))\\
   =&p_n(\varphi(m_1),e,...,e,m_2)+\sum_{j=2}^{n-1}p_n(m_1,e,...,\varphi(e),...,e,m_2)+p_n(m_1,e,...,e,\varphi(m_2))\\
   =&[(-1)^{n-2}e\varphi(m_1)f+f\varphi(m_1)e,m_2]+(-1)^{n-3}(n-2)[[m_1,\varphi(e)],m_2]+(-1)^{n-2}[m_1,\varphi(m_2)]\\
   =&[\varphi(m_1),m_2]+(-1)^{n-2}[m_1,\varphi(m_2)].
\end{align*}
For each $t_1,t_2$ in $f\mathcal{A}e$, since $p_n(t_1,e,...,e,t_2)=[p_{n-1}(t_1,e,...,e),t_2]=0$, we have that
\begin{align*}
  0=&\varphi(p_n(t_1,e,...,e,t_2))\\
   =&p_n(\varphi(t_1),e,...,e,t_2)+\sum_{j=2}^{n-1}p_n(t_1,e,...,\varphi(e),...,e,t_2)+p_n(t_1,e,...,e,\varphi(t_2))\\
   =&[(-1)^{n-2}e\varphi(t_1)f+f\varphi(t_1)e,t_2]+(n-2)[[t_1,\varphi(e)],t_2]+[t_1,\varphi(t_2)]\\
   =&(-1)^{n-2}[\varphi(t_1),t_2]+[t_1,\varphi(t_2)].
\end{align*}

Thus, for each $n\geq 2$, we conclude that
\begin{align*}
&\varphi(mt)-\varphi(tm)=[\varphi(m),t]+[m,\varphi(t)],\\
&(-1)^{n-2}[\varphi(m_1),m_2]+[m_1,\varphi(m_2)]=0,\\
&(-1)^{n-2}[\varphi(t_1),t_2]+[t_1,\varphi(t_2)]=0
\end{align*}
for each $m,m_1,m_2$ in $e\mathcal{A}f$ and $t,t_1,t_2$ in $f\mathcal{A}e$.\\

\noindent$\begin{array}{ll}\textbf{Claim 4}
            & d~\textrm{is a derivation}.
            \end{array}$

According to the definition (\ref{S.11}) of $d$, we have that
\begin{equation}\label{S.14}
\begin{array}{lllll}
 d(a)=ed(a)e, &d(m)=ed(m)f, &d(t)=fd(t)e& \textrm{and} &d(b)=fd(b)f
\end{array}
\end{equation}
for each $a$ in $e\mathcal{A}e$, $m$ in $e\mathcal{A}f$, $t$ in $f\mathcal{A}e$ and $b$ in $f\mathcal{A}f$.
For each $a$ in $e\mathcal{A}e$ and $m$ in $e\mathcal{A}f$, by Claim 3, we have $\varphi(am)=[\varphi(a),m]+a\circ\varphi(m)$. Thus, $$e\varphi(am)f=[\varphi(a),m]+a\varphi(m)=e\varphi(a)e\cdot emf-emf\cdot f\varphi(a)f+eae\cdot e\varphi(m)f.$$ By (i) and the definition of $\tau$, we know that $emf\cdot f\varphi(a)f=\tau^{-1}(f\varphi(a)f)\cdot emf$. Thus,
\begin{equation}\label{S.15}
     d(am)=e\varphi(am)f=(e\varphi(a)e-\tau^{-1}(f\varphi(a)f))\cdot emf+eae\cdot e\varphi(m)f=d(a)m+ad(m)
\end{equation}
for each $a$ in $e\mathcal{A}e$ and $m$ in $e\mathcal{A}f$. Make similar discussions on $mb$, $ta$ and $bt$, and we obtain that
\begin{equation}\label{S.16}
\begin{array}{llll}
d(mb)=md(b)+d(m)b, &d(ta)=td(a)+d(t)a & \textrm{and} &d(bt)=d(b)t+bd(t)
\end{array}
\end{equation}
for each $a$ in $e\mathcal{A}e$, $m$ in $e\mathcal{A}f$, $t$ in $f\mathcal{A}e$ and $b$ in $f\mathcal{A}f$.
For each $m$ in $e\mathcal{A}f$ and $t$ in $f\mathcal{A}e$, by Claim 3, we have $\varphi(mt)-\varphi(tm)=[\varphi(m),t]+[m,\varphi(t)]$. Thus,
\begin{align*}
  e\varphi(mt)e-e\varphi(tm)e=&\varphi(m)t+m\varphi(t)=e\varphi(m)f\cdot fte+emf\cdot f\varphi(t)e, \\
  -f\varphi(mt)f+f\varphi(tm)f=&t\varphi(m)+\varphi(t)m=fte\cdot e\varphi(m)f+f\varphi(t)e\cdot emf.
\end{align*}
Since $mt\in e\mathcal{A}e$ and $tm\in f\mathcal{A}f$, we obtain that
\begin{align*}
    d(m)t+md(t)=&e\varphi(m)f\cdot fte+emf\cdot f\varphi(t)e=e\varphi(mt)e-e\varphi(tm)e\\
        =&d(mt)+\tau^{-1}(f\varphi(mt)f)-e\varphi(tm)e,\\
    d(t)m+td(m)=&f\varphi(t)e\cdot emf+fte\cdot e\varphi(m)f=-f\varphi(mt)f-f\varphi(tm)f\\
        =&d(tm)+\tau(e\varphi(tm)e)-f\varphi(mt)f,
\end{align*}
By (ii) and Lemma \ref{S_2}, $\tau(e\varphi(tm)e)=f\varphi(mt)f$ and $e\varphi(tm)e=\tau^{-1}(f\varphi(mt)f)$. Thus,
\begin{equation}\label{S.17}
\begin{array}{lll}
d(m)t+md(t)=d(mt)& \textrm{and} &d(t)m+td(m)=d(tm)
\end{array}
\end{equation}
for each $m$ in $e\mathcal{A}f$ and $t$ in $f\mathcal{A}e$. By (\ref{S.14}), (\ref{S.15}), (\ref{S.16}), (\ref{S.17}) and \cite[Lemma 2.3]{Benkovic-Sirovnik}, we obtain that $d$ is a derivation.\\

\noindent$\begin{array}{ll}\textbf{Claim 5.}
            & \delta~\textrm{is a singular Jordan derivation}.
            \end{array}$

According to the definition (\ref{S.12}) of $\delta$, we only need to prove that $\delta$ is a Jordan derivation.
For each $a$ in $e\mathcal{A}e$, $m$ in $e\mathcal{A}f$, $t$ in $f\mathcal{A}e$ and $b$ in $f\mathcal{A}f$, by Claim 3, we know that
$$\begin{array}{lllll}
f\varphi(am)e=\varphi(m)a, &f\varphi(mb)e=b\varphi(m), &e\varphi(ta)f=a\varphi(t)& \textrm{and} &e\varphi(bt)f=\varphi(t)b.
\end{array}$$
In view of (\ref{S.12}), we obtain that
\begin{equation}\label{S.18}
\begin{array}{lllll}
\delta(am)=\delta(m)a, &\delta(mb)=b\delta(m), &\delta(ta)=a\delta(t)& \textrm{and} &\delta(bt)=\delta(t)b
\end{array}
\end{equation}
for each $a$ in $e\mathcal{A}e$, $m$ in $e\mathcal{A}f$, $t$ in $f\mathcal{A}e$ and $b$ in $f\mathcal{A}f$.

For each  $m$ in $e\mathcal{A}f$, if $n$ is even, then $2f\varphi(m)e=0$ by (\ref{S.9-2}). Since $\mathcal{A}$ is $2$-torsion free, $f\varphi(m)e=0$, i.e. $\delta(m)=0$. If $n$ is odd, then by Claim 3, we have $2[m,\varphi(m)]=0$. Since $\mathcal{A}$ is $2$-torsion free, we have that $[m,\varphi(m)]=0$. Left and right multiplication by $e$ and $f$ respectively implies that $m\varphi(m)=\varphi(m)m=0$, i.e. $m\delta(m)=\delta(m)m=0$. Thus,
\begin{equation}\label{S.19}
  m\delta(m)=\delta(m)m=0
\end{equation}
for each $n\geq 2$.

For each  $t$ in $f\mathcal{A}e$, if $n$ is even, then $2e\varphi(t)f=0$ by (\ref{S.10-2}). Since $\mathcal{A}$ is $2$-torsion free, $e\varphi(t)f=0$, i.e. $\delta(t)=0$. If $n$ is odd, then by Claim 3, we have $2[t,\varphi(t)]=0$. Since $\mathcal{A}$ is $2$-torsion free, we have that $[t,\varphi(t)]=0$. Left and right multiplication by $e$ and $f$ respectively implies that $t\varphi(t)=\varphi(t)t=0$, i.e. $t\delta(t)=\delta(t)t=0$. Thus,
\begin{equation}\label{S.20}
  t\delta(t)=\delta(t)t=0
\end{equation}
for each $n\geq 2$.

Let $x=a+m+t+b$ be an arbitrary element in $\mathcal{A}$ where $a,m,t,b$ are elements in $e\mathcal{A}e, e\mathcal{A}f,f\mathcal{A}e,f\mathcal{A}f$, respectively. By (\ref{S.12}), (\ref{S.18}), (\ref{S.19}) and (\ref{S.20}), we obtain that
\begin{align*}
  \delta(x^2)=&\delta((a+m+t+b)^2)\\
  %=\delta(a^2+am+mt+mb+ta+tm+bt+b^2) \\
  =&\delta(m)a+b\delta(m)+a\delta(t)+\delta(t)b,\\
  x\delta(x)+\delta(x)x=&(a+m+t+b)\delta(a+m+t+b)+\delta(a+m+t+b)(a+m+t+b)\\
  %=&m\delta(m)+b\delta(m)+a\delta(t)+t\delta(t)+\delta(m)a+\delta(m)m+\delta(t)t+\delta(t)b\\
  =&b\delta(m)+a\delta(t)+\delta(m)a+\delta(t)b.
\end{align*}
So $\delta(x^2)=x\delta(x)+\delta(x)x$ for each $x$ in $\mathcal{A}$. Thus, $\delta$ is a singular Jordan derivation.\\

\noindent$\begin{array}{ll}\textbf{Claim 6.}
            %& \gamma~\textrm{vanishes on}~p_n(\mathcal{A},\mathcal{A},...,\mathcal{A}).
            & \gamma~\textrm{vanishes on all}~(n-1)-\textrm{th commutators of}~\mathcal{A}.
            \end{array}$

For each $x_1,x_2,...,x_n$ in $\mathcal{A}$, we have that
\begin{align*}
 \gamma(p_n(x_1,x_2,...,x_n))=&\varphi(p_n(x_1,x_2,...,x_n))-d(p_n(x_1,x_2,...,x_n))-\delta(p_n(x_1,x_2,...,x_n))\\
 =&\sum_{i=1}^{n}p_n(x_1,...,\varphi(x_i),...,x_n)-d(p_n(x_1,x_2,...,x_n))
 -\delta(p_n(x_1,x_2,...,x_n))\\
 =&\sum_{i=1}^{n}p_n(x_1,...,d(x_i)+\delta(x_i)+\gamma(x_i),...,x_n)-d(p_n(x_1,x_2,...,x_n))\\
 &-\delta(p_n(x_1,x_2,...,x_n)).
 %=&\sum_{i=1}^{n}p_n(x_1,...,d(x_i),...,x_n)-d(p_n(x_1,x_2,...,x_n))\\
 %&+\sum_{i=1}^{n}p_n(x_1,...,\delta(x_i),...,x_n)-\delta(p_n(x_1,x_2,...,x_n))\\
 %&+\sum_{i=1}^{n}p_n(x_1,...,\gamma(x_i),...,x_n).
\end{align*}
Since $d$ is a derivation and $\gamma(\mathcal{A})\subseteq\mathcal{Z}(\mathcal{A})$, it follows that
\begin{equation}\label{S.21}
 \gamma(p_n(x_1,x_2,...,x_n))=\sum_{i=1}^{n}p_n(x_1,...,x_{i-1},\delta(x_i),x_{i+1},...,x_n)-\delta(p_n(x_1,x_2,...,x_n)).
\end{equation}
\indent If $n$ is even, then in view of (\ref{S.9-2}), (\ref{S.10-2}), (\ref{S.12}) and that $\mathcal{A}$ is $2$-torsion free, we obtain that
$\delta(m)=f\varphi(m)e=0$ and $\delta(t)=e\varphi(t)f=0$
for each $m$ in $e\mathcal{A}f$ and $t$ in $f\mathcal{A}e$. Thus, $\delta(\mathcal{A})=\{0\}$. By (\ref{S.21}), we have $\gamma(p_n(x_1,x_2,...,x_n))=0$ for each $x_1,x_2,...,x_n$ in $\mathcal{A}$.\\
\indent If $n$ is odd, then by Claim 3 and (\ref{S.12}), we have that
\begin{align*}
    &\delta(am)=f\varphi(am)e=f\varphi(m)e\cdot eae=\delta(m)a, \\
    &\delta(mb)=f\varphi(mb)e=fbf\cdot f\varphi(m)e=b\delta(m), \\
    &\delta(ta)=e\varphi(ta)f=eae\cdot e\varphi(t)f=a\delta(t), \\
    &\delta(bt)=e\varphi(bt)f=e\varphi(t)f\cdot fbf=\delta(t)b, \\
    &\delta(m_1)m_2+\delta(m_2)m_1=f\varphi(m_1)e\cdot em_2f+f\varphi(m_2)e\cdot em_1f=0,\\
    &m_2\delta(m_1)+m_1\delta(m_2)=em_2f\cdot f\varphi(m_1)e+em_1f\cdot f\varphi(m_2)e=0,\\
    &\delta(t_1)t_2+\delta(t_2)t_1=e\varphi(t_1)f\cdot ft_2e+e\varphi(t_2)f\cdot ft_1e=0,\\
    &t_2\delta(t_1)+t_1\delta(t_2)=ft_2e\cdot e\varphi(t_1)f+ft_1e\cdot e\varphi(t_2)f=0
\end{align*}
\noindent for each $a$ in $e\mathcal{A}e$, $m,m_1,m_2$ in $e\mathcal{A}f$, $t,t_1,t_2$ in $f\mathcal{A}e$ and $b$ in $f\mathcal{A}f$. It follows that
\begin{align*}
    &\delta(a_1a_2m)=\delta(m)a_1a_2=\delta(a_1m)a_2=\delta(a_2a_1m)=\delta(m)a_2a_1,\\
    &\delta(mb_1b_2)=b_1b_2\delta(m)=b_1\delta(mb_2)=\delta(mb_2b_1)=b_2b_1\delta(m),\\
    &\delta(ta_1a_2)=a_1a_2\delta(t)=a_1\delta(ta_2)=\delta(ta_2a_1)=a_2a_1\delta(t),\\
    &\delta(b_1b_2t)=\delta(t)b_1b_2=\delta(b_1t)b_2=\delta(b_2b_1t)=\delta(t)b_2b_1,\\
    &\delta(amb)=\delta(mb)a=b\delta(m)a,\\
    &\delta(bta)=\delta(ta)b=a\delta(t)b,\\
    &\delta(t_1)t_2m+mt_2\delta(t_1)=\delta(t_2mt_1+t_1mt_2)=mt_1\delta(t_2)+mt_2\delta(t_1)=0,\\
    &\delta(m_1)m_2t+tm_2\delta(m_1)=\delta(m_2tm_1+m_1tm_2)=tm_1\delta(m_2)+tm_2\delta(m_1)=0
\end{align*}
\noindent for each $a,a_1,a_2$ in $e\mathcal{A}e$, $m,m_1,m_2$ in $e\mathcal{A}f$, $t,t_1,t_2$ in $f\mathcal{A}e$ and $b,b_1,b_2$ in $f\mathcal{A}f$. Thus, for each $x_1=a_1+m_1+t_1+b_1$, $x_2=a_2+m_2+t_2+b_2$ and $x_3=a_3+m_3+t_3+b_3$ in $\mathcal{A}$ where
$a_1,a_2,a_3\in e\mathcal{A}e$, $m_1,m_2,m_3\in e\mathcal{A}f$, $t_1,t_2,t_3\in f\mathcal{A}e$ and $b_1,b_2,b_3\in f\mathcal{A}f$, we obtain that
\begin{align*}
  \delta([[x_1,x_2],x_3])
  =&\delta([[a_1+m_1+t_1+b_1,a_2+m_2+t_2+b_2],a_3+m_3+t_3+b_3]) \\
  =&\delta(t_1a_2a_3+b_1t_2a_3-t_2a_1a_3-b_2t_1a_3-b_3t_1a_2-b_3b_1t_2+b_3t_2a_1+b_3b_2t_1+a_1m_2b_3\\
  &+m_1b_2b_3-a_2m_1b_3-m_2b_1b_3-a_3a_1m_2-a_3m_1b_2+a_3a_2m_1+a_3m_2b_1),\\
  %&\\
  [[\delta(x_1),x_2],x_3]+&[[x_1,\delta(x_2)],x_3]+[[x_1,x_2],\delta(x_3)]\\
  =&[[\delta(a_1\!\!+\!m_1\!\!+\!t_1\!\!+\!b_1),a_2\!\!+\!m_2\!\!+\!t_2\!\!+\!b_2],a_3\!\!+\!m_3\!\!+\!t_3\!\!+\!b_3]\\
  &\!+\![[a_1\!+\!m_1\!+\!t_1\!+\!b_1,\delta(a_2\!+\!m_2\!+\!t_2\!+\!b_2)],a_3\!+\!m_3\!+\!t_3\!+\!b_3]\\
  &\!+\![[a_1\!+\!m_1\!+\!t_1\!+\!b_1,a_2\!+\!m_2\!+\!t_2\!+\!b_2],\delta(a_3\!+\!m_3\!+\!t_3\!+\!b_3)]\\
  =&\delta(t_1)b_2b_3-a_2\delta(t_1)b_3-\delta(t_2)b_1b_3+a_1\delta(t_2)b_3
  -a_3\delta(t_1)b_2+a_3a_2\delta(t_1)+a_3\delta(t_2)b_1\\
  &-a_3a_1\delta(t_2)+\delta(m_1)a_2a_3-b_2\delta(m_1)a_3-\delta(m_2)a_1a_3+b_1\delta(m_2)a_3
  -b_3\delta(m_1)a_2\\
  &+b_3b_2\delta(m_1)+b_3\delta(m_2)a_1-b_3b_1\delta(m_2).
\end{align*}
It follows that $\delta([[x_1,x_2],x_3])=[[\delta(x_1),x_2],x_3]+[[x_1,\delta(x_2)],x_3]+[[x_1,x_2],\delta(x_3)]$, i.e., $\delta$ is a Lie triple derivation. Since $n$ is odd, we can deduce that
\begin{align*}
    \delta(p_n(x_1,x_2,...,x_n)=&\delta([[p_n(x_1,x_2,...,x_{n-2}),x_{n-1}],x_n])\\
    =&[[\delta(p_{n-2}(x_1,x_2,...,x_{n-2})),x_{n-1}],x_n]+[[p_{n-2}(x_1,x_2,...,x_{n-2}),\delta(x_{n-1})],x_n]\\
    &+[[p_{n-2}(x_1,x_2,...,x_{n-2}),x_{n-1}],\delta(x_n)]\\
    %=&...\\
    =&p_{n-2}(\delta([[x_1,x_2],x_3]),x_4...,x_n)+\sum_{i=4}^{n}p_{n-2}([[x_1,x_2],x_3]),x_4,...,\delta(x_i),...,x_n)\\
    =&\sum_{i=1}^{n}p_n(x_1,...,x_{i-1},\delta(x_i),x_{i+1},...,x_n),
\end{align*}
i.e., $\delta$ is a Lie $n$-derivation. By (\ref{S.21}), we have $\gamma(p_n(x_1,x_2,...,x_n))=0$ for each $x_1,x_2,...,x_n$ in $\mathcal{A}$. Thus, Claim 6 holds.

\indent With the definitions (\ref{S.11}), (\ref{S.12}) and (\ref{S.13}) of $d,\delta,\gamma$ and Claims 4, 5 and 6, the proof is finished.
\end{proof}

\begin{remark}
The above theorem generalizes \cite[Theorem 3.4]{Benkovic} which considers Lie triple derivations on a unital algebra with a nontrivial idempotent $e$ satisfying the property (\ref{1.6}).
\end{remark}

\begin{corollary}\label{S_5}
%Let $\mathcal{A}$, $e$ and $\varphi$ be as in Theorem \ref{S_4}. Suppose that $\mathcal{A}$ is $2$- and $(n-1)$-torsion free.
Let $\varphi$ be a Lie $n$-derivation on $\mathcal{A}$. Suppose that $\mathcal{A}$ is $2$- and $(n-1)$-torsion free, and that $\mathcal{A}$ satisfies the property (\ref{1.6}). Then $\varphi$ is standard if and only if
\begin{description}
\item[(i)] $f\varphi(e\mathcal{A}e)f\subseteq f\mathcal{Z}(\mathcal{A})f$ and $e\varphi(f\mathcal{A}f)e\subseteq e\mathcal{Z}(\mathcal{A})e$,
\item[(ii)] $e\varphi(tm)e+f\varphi(mt)f\in\mathcal{Z}(\mathcal{A})$ for each $m\in e\mathcal{A}f$ and $t\in f\mathcal{A}e$,
\item[(iii)] $f\varphi(e\mathcal{A}f)e=e\varphi(f\mathcal{A}e)f=\{0\}$.
\end{description}
\end{corollary}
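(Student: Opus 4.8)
The plan is to deduce this corollary directly from Theorem \ref{S_4}, since being \emph{standard} (i.e.\ $\varphi=d+\gamma$) is exactly the special case of the decomposition (\ref{S.2}) in which the singular Jordan derivation $\delta$ vanishes. Conditions (i) and (ii) are already, by Theorem \ref{S_4}, equivalent to the existence of a decomposition $\varphi=d+\delta+\gamma$, so the whole task reduces to showing that condition (iii) is precisely what forces the singular Jordan part to be zero.

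For the necessity direction, suppose $\varphi=d+\gamma$ is standard. Taking $\delta=0$ (which is trivially a singular Jordan derivation), $\varphi$ has the form (\ref{S.2}), so (i) and (ii) follow immediately from the necessity part of Theorem \ref{S_4}. To obtain (iii), I would compute $f\varphi(m)e$ for $m\in e\mathcal{A}f$: writing $\varphi(m)=d(m)+\gamma(m)$, the central term contributes nothing since $f\gamma(m)e=0$ by the description of $\mathcal{Z}(\mathcal{A})$ in Lemma \ref{S_2}(i), while for the derivation term one uses $m=mf$ and $fm=0$ to get $d(m)e=m\,d(f)e$ and hence $f\,d(m)e=(fm)\,d(f)e=0$. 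Thus $f\varphi(e\mathcal{A}f)e=\{0\}$, and the symmetric computation with $t=ft=te$ gives $e\varphi(f\mathcal{A}e)f=\{0\}$, establishing (iii).

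For the sufficiency direction, assume (i), (ii) and (iii). By the sufficiency part of Theorem \ref{S_4}, conditions (i) and (ii) already yield the decomposition $\varphi=d+\delta+\gamma$ with $d,\delta,\gamma$ given explicitly by (\ref{S.11}), (\ref{S.12}) and (\ref{S.13}). The key point is that the singular Jordan derivation produced there is completely explicit: $\delta(a)=\delta(b)=0$ by definition, while $\delta(m)=f\varphi(m)e$ and $\delta(t)=e\varphi(t)f$. Condition (iii) says exactly that $f\varphi(m)e=0$ and $e\varphi(t)f=0$ for all $m\in e\mathcal{A}f$ and $t\in f\mathcal{A}e$, so $\delta$ vanishes on every Pierce component and hence $\delta=0$. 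Therefore $\varphi=d+\gamma$ is standard.

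The one delicate point to check is compatibility with the normalization of Remark \ref{S_3}: the construction in Theorem \ref{S_4} is carried out for the adjusted map $\varphi'=\varphi-d_0$, where $d_0(x)=[x,x_0]$ is an inner derivation. I expect this to cause no trouble, because the same computation as above shows $f\,d_0(m)e=0$ and $e\,d_0(t)f=0$ for any derivation $d_0$; hence $f\varphi'(m)e=f\varphi(m)e$ and $e\varphi'(t)f=e\varphi(t)f$, so condition (iii) is invariant under the passage from $\varphi$ to $\varphi'$ and the value of the constructed $\delta$ is unaffected. This invariance is the main thing to verify with care, but it is routine; the remainder of the argument is a direct reading-off from Theorem \ref{S_4}.
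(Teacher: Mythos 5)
Your proposal is correct and follows essentially the same route as the paper: both directions reduce to Theorem \ref{S_4}, with (iii) shown in the necessity direction by a direct Pierce-component computation of $f\varphi(m)e$ and $e\varphi(t)f$, and used in the sufficiency direction to kill the explicitly defined $\delta$ of (\ref{S.12}). The only cosmetic differences are that the paper derives $\gamma(m)=0$ from $m=p_n(m,f,\dots,f)$ rather than from $f\mathcal{Z}(\mathcal{A})e=\{0\}$, and that your explicit check that condition (iii) is invariant under the normalization of Remark \ref{S_3} is a worthwhile point the paper leaves implicit.
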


\begin{proof}
Suppose that $\varphi$ is standard. That is, there exists a derivation $d$ on $\mathcal{A}$ and a linear mapping $\gamma$ from $\mathcal{A}$ into $\mathcal{Z}(\mathcal{A})$ vanishing on all $(n-1)-$th commutators of $\mathcal{A}$, such that $\varphi=d+\gamma$. Let $\delta$ be a linear mapping on $\mathcal{A}$ and $\delta=0$. Then $\delta$ is a singular Jordan derivation and $\varphi=d+\delta+\gamma$. According to Theorem \ref{S_4}, we only need to prove (iii). For each $m$ in $e\mathcal{A}f$ and $t$ in $f\mathcal{A}e$, we have
$p_n(m,f,...,f)=p_{n-1}(m,f,...,f)=...=[m,f]=m$ and $p_n(t,e,...,e)=p_{n-1}(t,e,...,e)=...=[t,e]=t$. Since $\gamma$ vanishes on all $(n-1)-$th commutators of $\mathcal{A}$, we have that $\gamma(m)=\gamma(t)=0$. Thus,
\begin{align*}
  \varphi(m)=&d(em)+\gamma(m)=d(e)m+ed(m),\\
  \varphi(t)=&d(te)+\gamma(t)=d(t)e+td(e).
\end{align*}
Left and right multiplication by $e$ and $f$ respectively implies that
$f\varphi(m)e=0$ and $e\varphi(t)f=0$.
Hence, (iii) holds.\\
\indent Suppose that (i), (ii) and (iii) hold. According to Theorem \ref{S_4}, $\varphi$ is of the form (\ref{S.2}) $\varphi=d+\delta+\gamma$. By (iii) and the definition (\ref{S.12}) of $\delta$ in Theorem \ref{S_4}, we obtain that $\delta=0$.
\end{proof}

\begin{corollary}\label{S_6}
%Let $\mathcal{A}$, $e$ and $\varphi$ be as in Theorem \ref{S_4}. Suppose that $n$ is even and that $\mathcal{A}$ is $2$- and $(n-1)$-torsion free.
Let $\varphi$ be a Lie $n$-derivation on $\mathcal{A}$. Suppose that $n$ is even, and that $\mathcal{A}$ is a $2$- and $(n-1)$-torsion free algebra satisfying the property (\ref{1.6}). Then $\varphi$ is standard if and only if
\begin{description}
\item[(i)] $f\varphi(e\mathcal{A}e)f\subseteq f\mathcal{Z}(\mathcal{A})f$ and $e\varphi(f\mathcal{A}f)e\subseteq e\mathcal{Z}(\mathcal{A})e$,
\item[(ii)] $e\varphi(tm)e+f\varphi(mt)f\in\mathcal{Z}(\mathcal{A})$ for each $m\in e\mathcal{A}f$ and $t\in f\mathcal{A}e$.
\end{description}
\end{corollary}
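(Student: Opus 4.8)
The plan is to deduce this corollary directly from Theorem~\ref{S_4} (equivalently, from Corollary~\ref{S_5}), the point being that the extra hypothesis that $n$ is even forces the singular Jordan derivation in the decomposition (\ref{S.2}) to vanish identically, so that condition (iii) of Corollary~\ref{S_5} becomes automatic. Hence the two listed conditions (i) and (ii) already suffice, and no analogue of (iii) is needed.

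For the ``only if'' direction I would argue exactly as in the forward implication of Theorem~\ref{S_4}: if $\varphi$ is standard, write $\varphi=d+\gamma$ and take $\delta=0$, which is trivially a singular Jordan derivation, so $\varphi=d+\delta+\gamma$ is of the form (\ref{S.2}); Theorem~\ref{S_4} then yields conditions (i) and (ii). Alternatively, since a standard $\varphi$ is in particular of the form (\ref{S.2}), Corollary~\ref{S_5} already produces (i), (ii) (and (iii)), and one simply discards (iii).

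The substantive direction is the ``if'' part. Assuming (i) and (ii), Theorem~\ref{S_4} provides a decomposition $\varphi=d+\delta+\gamma$, with $d$ a derivation, $\delta$ the singular Jordan derivation given by (\ref{S.12}), and $\gamma$ a central map vanishing on all $(n-1)$-th commutators. I would then invoke the relations (\ref{S.9-2}) and (\ref{S.10-2}) established inside the proof of Theorem~\ref{S_4}, namely $f\varphi(m)e=(-1)^{n-1}f\varphi(m)e$ for $m\in e\mathcal{A}f$ and $e\varphi(t)f=(-1)^{n-1}e\varphi(t)f$ for $t\in f\mathcal{A}e$. Since $n$ is even, $(-1)^{n-1}=-1$, so $2f\varphi(m)e=0$ and $2e\varphi(t)f=0$; as $\mathcal{A}$ is $2$-torsion free this gives $f\varphi(m)e=e\varphi(t)f=0$. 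By the definition (\ref{S.12}) we have $\delta(m)=f\varphi(m)e=0$ and $\delta(t)=e\varphi(t)f=0$, while $\delta$ already annihilates $e\mathcal{A}e$ and $f\mathcal{A}f$; hence $\delta=0$ and $\varphi=d+\gamma$ is standard.

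The one point requiring care, and the only real obstacle, is the normalization from Remark~\ref{S_3}: the relations (\ref{S.9-2}), (\ref{S.10-2}) and the definition (\ref{S.12}) are stated for the normalized Lie $n$-derivation satisfying $e\varphi(e)f=f\varphi(e)e=0$, which differs from the given $\varphi$ by an inner derivation $d_0$. Since $d_0$ is itself a derivation and $\delta$ is built from the normalized map, absorbing $d_0$ into the derivation term keeps $\varphi=(d+d_0)+\gamma$ standard. If one prefers instead to route through Corollary~\ref{S_5}, one checks that $d_0(x)=[x,e\varphi(e)f-f\varphi(e)e]$ contributes no $f\mathcal{A}e$-component to $\varphi(m)$ for $m\in e\mathcal{A}f$ and no $e\mathcal{A}f$-component to $\varphi(t)$ for $t\in f\mathcal{A}e$, so that the vanishing established above transfers to the original $\varphi$, giving condition (iii) verbatim and letting Corollary~\ref{S_5} apply unchanged.
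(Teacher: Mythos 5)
Your proposal is correct and follows essentially the same route as the paper: the paper likewise reduces to Corollary~\ref{S_5} and uses (\ref{S.9-2}) and (\ref{S.10-2}) together with $n$ even and $2$-torsion freeness to get $f\varphi(e\mathcal{A}f)e=e\varphi(f\mathcal{A}e)f=\{0\}$, i.e.\ $\delta=0$. Your extra remark about the normalization of Remark~\ref{S_3} is a valid (and slightly more careful) observation, but it does not change the argument.
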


\begin{proof}
According to Corollary \ref{S_5}, we only need to prove that $f\varphi(e\mathcal{A}f)e=e\varphi(f\mathcal{A}e)f=\{0\}$ if (i) and (ii) hold.\\
\indent Suppose that (i) and (ii) hold. For each $m$ in $e\mathcal{A}f$ and $t$ in $f\mathcal{A}e$, by (\ref{S.9-2}) and (\ref{S.10-2}), we have that
$$\begin{array}{lll}
    f\varphi(m)e=(-1)^{n-1}f\varphi(m)e& \textrm{and} &e\varphi(t)f=(-1)^{n-1}e\varphi(t)f.
\end{array}$$
Since $n$ is even and $\mathcal{A}$ is $2$-torsion free, we have $f\varphi(m)e=e\varphi(t)f=0$.
\end{proof}

\begin{lemma}\cite[Remark 3.2]{Benkovic-Sirovnik}\label{S_7}
%Let $\mathcal{A}$ be a unital algebra with a nontrivial idempotent $e$ and $\delta$ be a singular Jordan derivation.
Let $\delta$ be a singular Jordan derivation on $\mathcal{A}$.
\begin{description}
\item[(i)] $\delta$ is an antiderivation if and only if $\delta$ satisfies
\begin{equation}\label{S.22-1}
\delta(e\mathcal{A}f)\cdot e\mathcal{A}f=e\mathcal{A}f\cdot\delta(e\mathcal{A}f)=\delta(f\mathcal{A}e)\cdot f\mathcal{A}e=f\mathcal{A}e\cdot\delta(f\mathcal{A}e)=\{0\}.
\end{equation}
\item[(ii)] If $\mathcal{A}$ satisfies
\begin{equation}\label{S.22-2}
  e\mathcal{A}f\cdot f\mathcal{A}e=f\mathcal{A}e\cdot e\mathcal{A}f=\{0\},
\end{equation}
\noindent then $\delta$ is an antiderivation.
\end{description}
\end{lemma}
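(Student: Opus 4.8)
The plan is to verify the antiderivation identity $\delta(xy)=\delta(y)x+y\delta(x)$ directly on the Pierce components, using the multiplication table of the decomposition (\ref{1.5}) together with the structural constraints (\ref{S.1}) that define a singular Jordan derivation. Since both sides are bilinear, it suffices to test the identity when $x$ and $y$ each range over the four corners $e\mathcal{A}e,e\mathcal{A}f,f\mathcal{A}e,f\mathcal{A}f$, i.e. over sixteen pairs. As preparation I would record the module relations $\delta(am)=\delta(m)a$, $\delta(mb)=b\delta(m)$, $\delta(ta)=a\delta(t)$, $\delta(bt)=\delta(t)b$ for $a\in e\mathcal{A}e$, $m\in e\mathcal{A}f$, $t\in f\mathcal{A}e$, $b\in f\mathcal{A}f$ (these are the relations (\ref{S.18})); each follows by applying the Jordan identity to $a\circ m=am$, $m\circ b=mb$, $t\circ a=ta$, $b\circ t=bt$ and discarding the summands annihilated by (\ref{S.1}) and by the vanishing off-diagonal corner products.

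For part (i) I would then run through the sixteen pairs, keeping in mind that $\delta$ shifts Pierce type: it kills $e\mathcal{A}e$ and $f\mathcal{A}f$, sends $e\mathcal{A}f$ into $f\mathcal{A}e$, and sends $f\mathcal{A}e$ into $e\mathcal{A}f$. Combined with the module relations, fourteen of the pairs satisfy the identity unconditionally --- either the right side is visibly equal to $\delta(xy)$, or both sides vanish because the corner products involved are zero. The two exceptional pairs are $(e\mathcal{A}f,e\mathcal{A}f)$ and $(f\mathcal{A}e,f\mathcal{A}e)$. If $x,y\in e\mathcal{A}f$ then $\delta(xy)=0$ since $e\mathcal{A}f\cdot e\mathcal{A}f=\{0\}$, whereas the two summands of $\delta(y)x+y\delta(x)$ lie in $f\mathcal{A}e\cdot e\mathcal{A}f\subseteq f\mathcal{A}f$ and $e\mathcal{A}f\cdot f\mathcal{A}e\subseteq e\mathcal{A}e$, i.e. in distinct corners; hence the identity holds for this pair exactly when $\delta(e\mathcal{A}f)\cdot e\mathcal{A}f=\{0\}=e\mathcal{A}f\cdot\delta(e\mathcal{A}f)$. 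Symmetrically, $(f\mathcal{A}e,f\mathcal{A}e)$ forces $\delta(f\mathcal{A}e)\cdot f\mathcal{A}e=\{0\}=f\mathcal{A}e\cdot\delta(f\mathcal{A}e)$. These four conditions together are precisely (\ref{S.22-1}), giving the equivalence in (i).

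Part (ii) then falls out of (i): because $\delta(e\mathcal{A}f)\subseteq f\mathcal{A}e$ and $\delta(f\mathcal{A}e)\subseteq e\mathcal{A}f$, each of the four products in (\ref{S.22-1}) is contained in either $e\mathcal{A}f\cdot f\mathcal{A}e$ or $f\mathcal{A}e\cdot e\mathcal{A}f$. Under hypothesis (\ref{S.22-2}) both of these are $\{0\}$, so (\ref{S.22-1}) holds automatically and part (i) shows that $\delta$ is an antiderivation.

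The one real subtlety --- and the step I would flag as the main obstacle --- is not to dismiss the pairs $(e\mathcal{A}f,e\mathcal{A}f)$ and $(f\mathcal{A}e,f\mathcal{A}e)$ as trivial merely because $xy=0$ there. Since $\delta$ changes Pierce type, the right-hand side of the antiderivation identity need not vanish when $xy=0$, and resolving that right-hand side into its two distinct Pierce components is exactly what manufactures the four conditions of (\ref{S.22-1}). Everything else is routine checking against the multiplication table and needs no torsion assumptions.
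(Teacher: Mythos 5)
Your argument is correct. The paper itself offers no proof of this lemma---it is quoted verbatim from \cite[Remark 3.2]{Benkovic-Sirovnik}---and your case-by-case verification over the sixteen Pierce pairs is exactly the standard argument behind that remark: the module relations follow from the Jordan identity as you describe, fourteen pairs hold unconditionally, and the pairs from $e\mathcal{A}f\times e\mathcal{A}f$ and $f\mathcal{A}e\times f\mathcal{A}e$ produce precisely the four conditions of (\ref{S.22-1}) because the two summands of $\delta(y)x+y\delta(x)$ land in distinct Pierce corners, so their sum vanishes only if each does. Part (ii) then follows from $\delta(e\mathcal{A}f)\subseteq f\mathcal{A}e$ and $\delta(f\mathcal{A}e)\subseteq e\mathcal{A}f$ exactly as you say; no gaps.
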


\begin{corollary}\label{S_8}
%Let $\mathcal{A}$, $e$ and $\varphi$ be as in Theorem \ref{S_4}. Suppose that $\mathcal{A}$ satisfies (\ref{S.22-2}) and that $\mathcal{A}$ is $2$- and $(n-1)$-torsion free.
Let $\varphi$ be a Lie $n$-derivation on $\mathcal{A}$. Suppose that $\mathcal{A}$ is $2$- and $(n-1)$-torsion free, and that $\mathcal{A}$ satisfies the property (\ref{1.6}) and (\ref{S.22-2}). Then $\varphi$ is of the form
\begin{equation}\label{S.23}
  \varphi=d+\delta+\gamma
\end{equation}
\noindent where $d$ is a derivation on $\mathcal{A}$, $\delta$ is a singular Jordan derivation and antiderivation on $\mathcal{A}$, and $\gamma$ is a linear mapping from $\mathcal{A}$ into $\mathcal{Z}(\mathcal{A})$ vanishing on all $(n-1)-$th commutators of $\mathcal{A}$, if and only if $f\varphi(e\mathcal{A}e)f\subseteq f\mathcal{Z}(\mathcal{A})f$ and $e\varphi(f\mathcal{A}f)e\subseteq e\mathcal{Z}(\mathcal{A})e$.
\end{corollary}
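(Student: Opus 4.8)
The plan is to deduce this corollary directly from Theorem \ref{S_4} together with Lemma \ref{S_7}(ii), exploiting the fact that the additional hypothesis (\ref{S.22-2}) simultaneously trivializes condition (ii) of Theorem \ref{S_4} and forces every singular Jordan derivation to be an antiderivation. The key observation driving the whole argument is that for each $m\in e\mathcal{A}f$ and $t\in f\mathcal{A}e$, the hypothesis (\ref{S.22-2}) gives $mt\in e\mathcal{A}f\cdot f\mathcal{A}e=\{0\}$ and $tm\in f\mathcal{A}e\cdot e\mathcal{A}f=\{0\}$, so that $mt=tm=0$.

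For the necessity direction, I would assume that $\varphi$ is of the form (\ref{S.23}), say $\varphi=d+\delta+\gamma$. Since a map that is both a singular Jordan derivation and an antiderivation is in particular a singular Jordan derivation, $\varphi$ is exactly of the form (\ref{S.2}) required by Theorem \ref{S_4}. Applying the necessity half of Theorem \ref{S_4} then yields its conditions (i) and (ii); in particular, condition (i), namely $f\varphi(e\mathcal{A}e)f\subseteq f\mathcal{Z}(\mathcal{A})f$ and $e\varphi(f\mathcal{A}f)e\subseteq e\mathcal{Z}(\mathcal{A})e$, is established, which is all that is asserted here.

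For the sufficiency direction, I would assume condition (i) holds and invoke the key observation above: for each $m\in e\mathcal{A}f$ and $t\in f\mathcal{A}e$ we have $e\varphi(tm)e+f\varphi(mt)f=e\varphi(0)e+f\varphi(0)f=0\in\mathcal{Z}(\mathcal{A})$, so condition (ii) of Theorem \ref{S_4} is automatically satisfied. Hence Theorem \ref{S_4} applies and produces the decomposition $\varphi=d+\delta+\gamma$ with $d$ a derivation, $\delta$ a singular Jordan derivation, and $\gamma$ a linear map into $\mathcal{Z}(\mathcal{A})$ vanishing on all $(n-1)$-th commutators. It then remains only to upgrade $\delta$ to an antiderivation, and this is immediate from Lemma \ref{S_7}(ii): since $\mathcal{A}$ satisfies (\ref{S.22-2}), every singular Jordan derivation on $\mathcal{A}$ is an antiderivation. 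This gives precisely the form (\ref{S.23}).

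I do not expect a genuine obstacle here, since no new computation is required beyond the two invoked results. The entire content lies in the bookkeeping observation that $mt=tm=0$ under (\ref{S.22-2}), which renders condition (ii) of Theorem \ref{S_4} vacuous. The only point meriting a little care is checking that the strengthened requirement on $\delta$ (being at once a singular Jordan derivation and an antiderivation) is consistent with both implications: for necessity it is a harmless strengthening of the hypotheses on $\varphi$, and for sufficiency it is furnished for free by Lemma \ref{S_7}(ii).
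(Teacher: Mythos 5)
Your proposal is correct and follows essentially the same route as the paper: both reduce to Theorem \ref{S_4} by noting that (\ref{S.22-2}) forces $mt=tm=0$, hence condition (ii) of that theorem holds trivially, and then invoke Lemma \ref{S_7}(ii) to upgrade $\delta$ to an antiderivation. No gaps.
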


\begin{proof}
Since $\mathcal{A}$ satisfies (\ref{S.22-2}), we have that $mt=tm=0$ and $e\varphi(tm)e+f\varphi(mt)f=0$ for each $m$ in $e\mathcal{A}f$ and $t$ in $f\mathcal{A}e$. Thus, the condition (ii) in Theorem \ref{S_4} holds. It follows that $\varphi$ is of the form (\ref{S.23}) $\varphi=d+\delta+\gamma$ where $\delta$ is a singular Jordan derivation if and only if $f\varphi(e\mathcal{A}e)f\subseteq f\mathcal{Z}(\mathcal{A})f$ and $e\varphi(f\mathcal{A}f)e\subseteq e\mathcal{Z}(\mathcal{A})e$. By Lemma \ref{S_7}, $\delta$ is also an antiderivation.
\end{proof}

\begin{remark}
Corollaries \ref{S_6} and \ref{S_8} generalize \cite[Corollaries 3.5, 3.6 and 3.7]{Benkovic} which considers Lie derivations and Lie triple derivations on a unital algebra with a nontrivial idempotent $e$ satisfying the property (\ref{1.6}).
\end{remark}

In \cite[Theorem 1]{Wang-Wang} and \cite[Theorem 2.1]{Wang}, the authors introduce the idea that $e\mathcal{A}e$ or $f\mathcal{A}f$ has no nonzero central ideal. We would mention that if $\mathcal{A}$ is a unital algebra with a nontrivial idempotent $e$ satisfying the property (\ref{1.6}), then $\mathcal{A}$ has no nonzero central ideal, but we cannot confirm that $e\mathcal{A}e$ or $f\mathcal{A}f$ has no nonzero central ideal.

\begin{lemma}\label{S_9}
%Let $\mathcal{A}$, $e$ and $\varphi$ be as in Theorem \ref{S_4}. Suppose that $\mathcal{A}$ is $(n-1)$-torsion free, and
Let $\varphi$ be a Lie $n$-derivation on $\mathcal{A}$. Suppose that $\mathcal{A}$ is $(n-1)$-torsion free, and that $\mathcal{A}$ satisfies the property (\ref{1.6}). If
\begin{description}
\item[(i)] $f\varphi(e\mathcal{A}e)f\subseteq f\mathcal{Z}(\mathcal{A})f$ and $e\varphi(f\mathcal{A}f)e\subseteq e\mathcal{Z}(\mathcal{A})e$,
\item[(ii)] $e\mathcal{A}e$ or $f\mathcal{A}f$ has no nonzero central ideal.
\end{description}
Then $\varphi$ satisfies $e\varphi(tm)e+f\varphi(mt)f\in\mathcal{Z}(\mathcal{A})$ for each $m\in e\mathcal{A}f$ and $t\in f\mathcal{A}e$.
\end{lemma}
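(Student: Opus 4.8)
The plan is to reduce the claim to the vanishing of a suitable central ideal of $e\mathcal{A}e$ (resp.\ $f\mathcal{A}f$). First I would invoke Remark~\ref{S_3} to assume $\varphi(e)=e\varphi(e)e+f\varphi(e)f$. This is legitimate because subtracting the inner derivation $x\mapsto[x,x_0]$ with $x_0=e\varphi(e)f-f\varphi(e)e$ alters neither hypothesis~(i) nor the target expression $e\varphi(tm)e+f\varphi(mt)f$: for $a\in e\mathcal{A}e$, $b\in f\mathcal{A}f$, $mt\in e\mathcal{A}e$ and $tm\in f\mathcal{A}f$ one checks that $[a,x_0]$, $[b,x_0]$, $[mt,x_0]$ and $[tm,x_0]$ all land in $e\mathcal{A}f+f\mathcal{A}e$, so their $e(\cdot)e$- and $f(\cdot)f$-parts vanish. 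With $\varphi$ normalized, the computations of Claim~2 and Claim~3 in the proof of Theorem~\ref{S_4} go through verbatim using only $(n-1)$-torsion-freeness (they never use $2$-torsion-freeness nor condition~(ii)); in particular I may use the Pierce decompositions of $\varphi$ and the identity $\varphi(mt)-\varphi(tm)=[\varphi(m),t]+[m,\varphi(t)]$.

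Next, using only hypothesis~(i), I would introduce the linear map $d$ of \eqref{S.11} (the terms $\tau^{-1}(f\varphi(a)f)$ and $\tau(e\varphi(b)e)$ are defined precisely because of~(i)), and record that $d$ satisfies the module identities \eqref{S.15} and \eqref{S.16}, whose derivations rely on Claim~3 and~(i) alone. From these I would first prove that $d$ is a derivation on each corner: for $a_1,a_2\in e\mathcal{A}e$ the element $c=d(a_1a_2)-d(a_1)a_2-a_1d(a_2)\in e\mathcal{A}e$ satisfies $c\cdot e\mathcal{A}f=\{0\}$ (apply $d$ to $(a_1a_2)m=a_1(a_2m)$ via \eqref{S.15}) and $f\mathcal{A}e\cdot c=\{0\}$ (apply $d$ to $t(a_1a_2)=(ta_1)a_2$ via \eqref{S.16}), whence $c=0$ by property~\eqref{1.6}; the same argument shows $d$ is a derivation on $f\mathcal{A}f$.

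The crux is the observation that makes a central ideal appear. Set $a_0=e\varphi(tm)e$ and $b_0=f\varphi(mt)f$. Since $tm\in f\mathcal{A}f$ and $mt\in e\mathcal{A}e$, hypothesis~(i) already forces $a_0\in e\mathcal{Z}(\mathcal{A})e$ and $b_0\in f\mathcal{Z}(\mathcal{A})f$, and a short computation with $\varphi(mt)-\varphi(tm)=[\varphi(m),t]+[m,\varphi(t)]$ gives
\[
D_1(m,t):=d(mt)-d(m)t-md(t)=a_0-\tau^{-1}(b_0)\in e\mathcal{Z}(\mathcal{A})e\subseteq\mathcal{Z}(e\mathcal{A}e).
\]
Because $\mathcal{Z}(\mathcal{A})\cap f\mathcal{A}f=\{0\}$ by~\eqref{1.6} and $\tau^{-1}(b_0)+b_0\in\mathcal{Z}(\mathcal{A})$ by Lemma~\ref{S_2}(ii), the desired conclusion $a_0+b_0\in\mathcal{Z}(\mathcal{A})$ is equivalent to $D_1(m,t)=0$ for all $m,t$.

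Finally I would show that the additive span $K$ of $\{D_1(m,t):m\in e\mathcal{A}f,\ t\in f\mathcal{A}e\}$ is an ideal of $e\mathcal{A}e$: applying $d$ to the associativity relations $(am)t=a(mt)$ and $m(ta)=(mt)a$, and using that $d$ is a derivation on $e\mathcal{A}e$ together with \eqref{S.15} and \eqref{S.16}, yields $D_1(am,t)=aD_1(m,t)$ and $D_1(m,ta)=D_1(m,t)a$ for every $a\in e\mathcal{A}e$. Thus $K$ is an ideal contained in $\mathcal{Z}(e\mathcal{A}e)$, i.e.\ a central ideal; if $e\mathcal{A}e$ has no nonzero central ideal, then $K=\{0\}$ and we are done. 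The case in which $f\mathcal{A}f$ has no nonzero central ideal is symmetric, working with $D_2(m,t)=d(tm)-d(t)m-td(m)=b_0-\tau(a_0)\in f\mathcal{Z}(\mathcal{A})f$ and the relations $D_2(m,bt)=bD_2(m,t)$, $D_2(mb,t)=D_2(m,t)b$. The main point to get right is this ideal property, which hinges on $d$ being a genuine derivation on the corners---a consequence of the module identities and the faithfulness-type hypothesis~\eqref{1.6}---rather than on any use of condition~(ii) of Theorem~\ref{S_4}.
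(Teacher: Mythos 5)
Your argument is essentially the paper's own proof: the paper also normalizes $\varphi$ via Remark~\ref{S_3}, reuses Claims 1--3 of Theorem~\ref{S_4}, shows $d$ is a derivation on the corner $e\mathcal{A}e$ by cancelling against $e\mathcal{A}f$ and $f\mathcal{A}e$ through property~(\ref{1.6}), and then kills the same quantity $\epsilon(m,t)=e\varphi(tm)e-\tau^{-1}(f\varphi(mt)f)=d(mt)-d(m)t-md(t)$ by exhibiting it as a central ideal of $e\mathcal{A}e$ via $\epsilon(am,t)=a\epsilon(m,t)$. One trivial slip: since $D_1(m,t)\in e\mathcal{A}e$, the intersection you need is $\mathcal{Z}(\mathcal{A})\cap e\mathcal{A}e=\{0\}$ rather than $\mathcal{Z}(\mathcal{A})\cap f\mathcal{A}f=\{0\}$ (and in fact only the trivial direction of that equivalence is used).
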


\begin{proof}
Without loss of generality, we suppose that $e\mathcal{A}e$ has no nonzero central ideal. Since $\varphi$ is a Lie $n$-derivation on $\mathcal{A}$. Discussing similarly as Theorem \ref{S_4}, we obtain same results as Claims 1, 2 and 3 in Theorem \ref{S_4}. According to (i) and the definition (\ref{S.11}) of $d$, we conclude that for each $a$ in $e\mathcal{A}e$, $m$ in $e\mathcal{A}f$ and $t$ in $f\mathcal{A}e$,
\begin{align}
    &d(am)=d(a)m+ad(m),\notag\\
    &d(ta)=td(a)+d(t)a,\notag\\
    &d(m)t+md(t)=d(mt)+\tau^{-1}(f\varphi(mt)f)-e\varphi(tm)e.\label{S.23-24}
\end{align}
For each $a_1,a_2$ in $e\mathcal{A}e$, $m$ in $e\mathcal{A}f$ and $t$ in $f\mathcal{A}e$, it follows that
\begin{align*}
  &d(a_1a_2m)=d(a_1a_2)m+a_1a_2d(m),\\
  &d(a_1a_2m)=d(a_1)a_2m+a_1d(a_2m)=d(a_1)a_2m+a_1d(a_2)m+a_1a_2d(m),\\
  &d(ta_1a_2)=td(a_1a_2)+d(t)a_1a_2,\\
  &d(ta_1a_2)=ta_1d(a_2)+d(ta_1)a_2=ta_1d(a_2)+td(a_1)a_2+d(t)a_1a_2.
\end{align*}
Thus,
\begin{align*}
    (d(a_1a_2)-d(a_1)a_2-a_1d(a_2))m=0,\\
    t(d(a_1a_2)-a_1d(a_2)-d(a_1)a_2)=0.
\end{align*}
Since $\mathcal{A}$ satisfies the property (\ref{1.6}), we obtain that $$d(a_1a_2)=d(a_1)a_2+a_1d(a_2).$$
Denote that $\epsilon(m,t)=e\varphi(tm)e-\tau^{-1}(f\varphi(mt)f)$ for each $m$ in $e\mathcal{A}f$ and $t$ in $f\mathcal{A}e$. Then $\epsilon(m,t)=d(mt)-d(m)t-md(t)$ by (\ref{S.23-24}). According to (i), we have that $\epsilon(m,t)\in e\mathcal{Z}(\mathcal{A})e\subseteq\mathcal{Z}(e\mathcal{A}e)$.
Since
\begin{align*}
  \epsilon(am,t)=&d(amt)-d(am)t-amd(t)\\
  =&d(a)mt+ad(mt)-d(a)mt-ad(m)t-amd(t)\\
  =&a(d(mt)-d(m)t-md(t))\\
  =&a\epsilon(m,t)
\end{align*}
for each $a$ in $e\mathcal{A}e$, $m$ in $e\mathcal{A}f$ and $t$ in $f\mathcal{A}e$, we obtain that $\epsilon(m,t)$ is a central ideal of $e\mathcal{A}e$. According to the assumption that $e\mathcal{A}e$ has no nonzero central ideal, we have $\epsilon(m,t)=0$, i.e., $e\varphi(tm)e-\tau^{-1}(f\varphi(mt)f)=0$. Thus, $e\varphi(tm)e+f\varphi(mt)f\in\mathcal{Z}(\mathcal{A})$.\\
\indent The proof in case that $f\mathcal{A}f$ has no nonzero central ideal goes in a similar way.
\end{proof}

\begin{corollary}\label{S_10}
%Let $\mathcal{A}$, $e$ and $\varphi$ be as in Theorem \ref{S_4}. Suppose that $\mathcal{A}$ is $2$- and $(n-1)$-torsion free.
Let $\varphi$ be a Lie $n$-derivation on $\mathcal{A}$. Suppose that $\mathcal{A}$ is $2$- and $(n-1)$-torsion free, and that $\mathcal{A}$ satisfies the property (\ref{1.6}). If
\begin{description}
\item[(i)] $f\varphi(e\mathcal{A}e)f\subseteq f\mathcal{Z}(\mathcal{A})f$ and $e\varphi(f\mathcal{A}f)e\subseteq e\mathcal{Z}(\mathcal{A})e$,
\item[(ii)] $e\mathcal{A}e$ or $f\mathcal{A}f$ has no nonzero central ideal,
\end{description}
then $\varphi$ is of the form
\begin{equation}\label{S.24}
\varphi=d+\delta+\gamma
\end{equation}
\noindent where $d$ is a derivation on $\mathcal{A}$, $\delta$ is a singular Jordan derivation and antiderivation on $\mathcal{A}$, and $\gamma$ is a linear mapping from $\mathcal{A}$ into $\mathcal{Z}(\mathcal{A})$ vanishing on all $(n-1)-$th commutators of $\mathcal{A}$.
\end{corollary}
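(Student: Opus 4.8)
The plan is to deduce this statement from the machinery already assembled, namely Lemma \ref{S_9}, Theorem \ref{S_4} and Lemma \ref{S_7}. Since hypothesis (i) here is word-for-word hypothesis (i) of Lemma \ref{S_9}, and $\mathcal{A}$ is $(n-1)$-torsion free and satisfies (\ref{1.6}), Lemma \ref{S_9} applies at once and yields $e\varphi(tm)e+f\varphi(mt)f\in\mathcal{Z}(\mathcal{A})$ for all $m\in e\mathcal{A}f$ and $t\in f\mathcal{A}e$. This is precisely hypothesis (ii) of Theorem \ref{S_4}. As $\mathcal{A}$ is moreover $2$-torsion free, Theorem \ref{S_4} then delivers the decomposition $\varphi=d+\delta+\gamma$, with $d$ a derivation, $\delta$ a singular Jordan derivation of the explicit shape (\ref{S.12}), and $\gamma$ a central map vanishing on all $(n-1)$-th commutators. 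Thus the only genuinely new assertion is that this particular $\delta$ is in addition an \emph{antiderivation}; everything else is already recorded.

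To prove $\delta$ is an antiderivation I would invoke Lemma \ref{S_7}(i) and verify the four vanishing conditions (\ref{S.22-1}); note that Lemma \ref{S_7}(ii) is unavailable, since (\ref{S.22-2}) is not assumed here. When $n$ is even, $\delta=0$ by (\ref{S.12}) together with $2$-torsion freeness, so (\ref{S.22-1}) holds trivially. So assume $n$ is odd and, without loss of generality, that $e\mathcal{A}e$ has no nonzero central ideal (the case of $f\mathcal{A}f$ being entirely symmetric under interchanging $e$ and $f$). The first step treats the two products landing in $e\mathcal{A}e$, namely $m_1\delta(m_2)$ and $\delta(t_1)t_2$ for $m_i\in e\mathcal{A}f$, $t_i\in f\mathcal{A}e$. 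Combining the antisymmetry relations $m_2\delta(m_1)+m_1\delta(m_2)=0$ and $\delta(t_1)t_2+\delta(t_2)t_1=0$ from the proof of Theorem \ref{S_4} (Claim 6) with the module identities $\delta(am)=\delta(m)a$ and $\delta(ta)=a\delta(t)$ of (\ref{S.18}), a direct computation gives $a\cdot m_1\delta(m_2)=m_1\delta(m_2)\cdot a$ and $a\cdot\delta(t_1)t_2=\delta(t_1)t_2\cdot a$ for every $a\in e\mathcal{A}e$. Hence the linear spans of $\{m_1\delta(m_2)\}$ and $\{\delta(t_1)t_2\}$ are ideals of $e\mathcal{A}e$ consisting of central elements, and the no-central-ideal hypothesis forces them to be zero, i.e. $e\mathcal{A}f\cdot\delta(e\mathcal{A}f)=\{0\}$ and $\delta(f\mathcal{A}e)\cdot f\mathcal{A}e=\{0\}$.

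It remains to treat the two products landing in $f\mathcal{A}f$, namely $\delta(m_1)m_2$ and $t_1\delta(t_2)$, for which only property (\ref{1.6}) is at hand. The idea is to show that each such $w\in f\mathcal{A}f$ satisfies $e\mathcal{A}f\cdot w=\{0\}=w\cdot f\mathcal{A}e$ and then conclude $w=0$ from (\ref{1.6}). For $w=\delta(m_1)m_2$, premultiplying by $m_3\in e\mathcal{A}f$ gives $m_3\delta(m_1)\cdot m_2=0$ by the first step, while postmultiplying by $t\in f\mathcal{A}e$ and using the triple-product identity $\delta(m_1)m_2t+tm_2\delta(m_1)=0$ of Claim 6 reduces $wt$ to $-t\cdot m_2\delta(m_1)=0$, again by the first step; so $w=0$. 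The product $t_1\delta(t_2)$ is disposed of identically via the companion identities $\delta(t_1)t_2m+mt_2\delta(t_1)=0$ and $mt_1\delta(t_2)+mt_2\delta(t_1)=0$. With all four conditions of (\ref{S.22-1}) verified, Lemma \ref{S_7}(i) shows $\delta$ is an antiderivation, finishing the proof.

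I expect the final step to be the main obstacle: the two products landing in $e\mathcal{A}e$ vanish by a clean central-ideal argument, but the two products landing in $f\mathcal{A}f$ cannot be handled that way, since $f\mathcal{A}f$ is not assumed to have trivial central ideals; they instead require the extra bridging through property (\ref{1.6}) and the triple-product identities of Claim 6. Checking carefully that these triple-product identities, already established inside the proof of Theorem \ref{S_4}, combine with the first-step vanishings to supply exactly the two annihilation conditions needed for (\ref{1.6}) is the delicate point.
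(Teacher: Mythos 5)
Your argument is correct, and its skeleton coincides with the paper's: Lemma \ref{S_9} supplies hypothesis (ii) of Theorem \ref{S_4}, the theorem gives the decomposition $\varphi=d+\delta+\gamma$, and what remains is to verify the four conditions \eqref{S.22-1} so that Lemma \ref{S_7}(i) applies, with $\delta=0$ for even $n$ and a central-ideal argument in $e\mathcal{A}e$ for odd $n$. The difference lies in how the four vanishings are reached. The paper first proves $[\delta(m_1),m_2]\in\mathcal{Z}(\mathcal{A})$ and $[\delta(t_1),t_2]\in\mathcal{Z}(\mathcal{A})$ (its \eqref{S.29}) by evaluating $\varphi$ on further words such as $p_n(m_1,m_2,m,f,\ldots,f)$ and invoking Lemma \ref{S_2}(iii) together with $2$-torsion-freeness; from this it extracts $m_2\delta(m_1),\delta(t_1)t_2\in\mathcal{Z}(e\mathcal{A}e)$, kills them by the no-central-ideal hypothesis, and then notes that the remaining products $\delta(m_1)m_2$ and $t_1\delta(t_2)$ are central elements of $\mathcal{A}$ supported in $f\mathcal{A}f$, hence zero. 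You bypass \eqref{S.26}--\eqref{S.29} entirely: centrality of $m_1\delta(m_2)$ and $\delta(t_1)t_2$ in $e\mathcal{A}e$ follows directly from the antisymmetry relations of Claim 6 and the module identities \eqref{S.18} (e.g. $a\,m_1\delta(m_2)=-m_2\delta(am_1)=-m_2\delta(m_1)a=m_1\delta(m_2)\,a$), and the $f\mathcal{A}f$-side products are eliminated by deriving from the triple-product identities of Claim 6 exactly the two annihilation conditions demanded by \eqref{1.6}. Both routes are sound; yours is somewhat more economical, recycling only identities already recorded in the proof of Theorem \ref{S_4}, whereas the paper's detour yields the slightly stronger intermediate fact that the full commutators $[\delta(m_1),m_2]$ and $[\delta(t_1),t_2]$ are central in $\mathcal{A}$.
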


\begin{proof}
According to Theorem \ref{S_4} and Lemma \ref{S_9}, we only need to prove that $\delta$ is an antiderivation.\\
\indent Without loss of generality, we suppose that $e\mathcal{A}e$ has no nonzero central ideal. The following discussion is partially similar as Claim 6 in Theorem \ref{S_4}, and we will omit several complicated procedures. If $n$ is even, then in view of (\ref{S.9-2}), (\ref{S.10-2}), (\ref{S.12}) and that $\mathcal{A}$ is $2$-torsion free, we obtain that $\delta(m)=0$ and $\delta(t)=0$ for each $m$ in $e\mathcal{A}f$ and $t$ in $f\mathcal{A}e$. Thus, $\delta=0$.
If $n$ is odd, then in view of (\ref{S.12}) and Claim 3 in Theorem \ref{S_4}, we have that for each $m_1,m_2$ in $e\mathcal{A}f$ and $t_1,t_2$ in $f\mathcal{A}e$,
\begin{equation}\label{S.25}
\begin{array}{lll}
[\delta(m_1),m_2]+[\delta(m_2),m_1]=0 & \textrm{and} & [\delta(t_1),t_2]+[\delta(t_2),t_1]=0.
\end{array}
\end{equation}
Take arbitrary elements $m,m_1,m_2$ in $e\mathcal{A}f$. If $n\geq 3$, since $p_n(m_1,m_2,m,f,...,f)=0$, we have that
\begin{align*}
    0=&\varphi(p_n(m_1,m_2,m,f,...,f))\\
    =&[[\varphi(m_1),m_2],m]+[[m_1,\varphi(m_2)],m]\\
    =&[[\varphi(m_1),m_2]+[m_1,\varphi(m_2)],m]\\
    =&[[\delta(m_1),m_2]+[m_1,\delta(m_2)],m].
\end{align*}
Or if $n=2$, then
\begin{align*}
    0=&\varphi([[m_1,m_2],m])\\
    =&[\varphi([m_1,m_2]),m]\\
    =&[[\varphi(m_1),m_2]+[m_1,\varphi(m_2)],m]\\
    =&[[\delta(m_1),m_2]+[m_1,\delta(m_2)],m].
\end{align*}
Thus, for each $n\geq 2$ and for each $m,m_1,m_2$ in $e\mathcal{A}f$, we have that
\begin{equation}\label{S.26}
[[\delta(m_1),m_2]+[m_1,\delta(m_2)],m]=0.
\end{equation}
Similarly, since $p_n(m_1,m_2,t,e,...,e)=p_n(t_1,t_2,m,f,...,f)=p_n(t_1,t_2,t,e,...,e)=0$ when $n\geq 3$, we can conclude that
\begin{align}
[[\delta(m_1),m_2]+[m_1,\delta(m_2)],t]=&0,\label{S.27-1}\\
[[\delta(t_1),t_2]+[t_1,\delta(t_2)],m]=&0,\label{S.27-2}\\
[[\delta(t_1),t_2]+[t_1,\delta(t_2)],t]=&0\label{S.27-3}
\end{align}
for each $n\geq 2$ and for each $m,m_1,m_2$ in $e\mathcal{A}f$ and $t,t_1,t_2$ in $f\mathcal{A}e$.
Considering (\ref{S.26}), (\ref{S.27-1}), (\ref{S.27-2}), (\ref{S.27-3}) and Lemma \ref{S_3}, it follows that
\begin{equation}\label{S.28}
\begin{array}{lll}
[\delta(m_1),m_2]+[m_1,\delta(m_2)]\in\mathcal{Z}(\mathcal{A}) & \textrm{and} & [\delta(t_1),t_2]+[t_1,\delta(t_2)]\in\mathcal{Z}(\mathcal{A}).
\end{array}
\end{equation}
The subtraction of (\ref{S.25}) and (\ref{S.28}) leads to $2[\delta(m_1),m_2]\in\mathcal{Z}(\mathcal{A})$ and
$2[\delta(t_1),t_2]\in\mathcal{Z}(\mathcal{A})$. Since $\mathcal{A}$ is $2$-torsion free, we have that
\begin{equation}\label{S.29}
\begin{array}{lll}
[\delta(m_1),m_2]=\delta(m_1)m_2-m_2\delta(m_1)\in\mathcal{Z}(\mathcal{A}) & \textrm{and} &
[\delta(t_1),t_2]=\delta(t_1)t_2-t_2\delta(t_1)\in\mathcal{Z}(\mathcal{A}).
\end{array}
\end{equation}
Hence,
$$\begin{array}{lll}m_2\delta(m_1)\in\mathcal{Z}(e\mathcal{A}e) & \textrm{and} & \delta(t_1)t_2\in\mathcal{Z}(e\mathcal{A}e)\end{array}$$
for each $m_1,m_2$ in $e\mathcal{A}f$ and $t_1,t_2$ in $f\mathcal{A}e$. It follows obviously that $e\mathcal{A}f\cdot\delta(e\mathcal{A}f)$ and $\delta(f\mathcal{A}e)\cdot f\mathcal{A}e$ are central ideals of $e\mathcal{A}e$. Since $e\mathcal{A}e$ has no nonzero central ideal, we confirm that $e\mathcal{A}f\cdot\delta(e\mathcal{A}f)=\delta(f\mathcal{A}e)\cdot f\mathcal{A}e=\{0\}$. According to (\ref{S.29}), we confirm that $\delta(e\mathcal{A}f)\cdot e\mathcal{A}f=f\mathcal{A}e\cdot \delta(f\mathcal{A}e)=\{0\}$. By lemma \ref{S_7}, we obtain that $\delta$ is an antiderivation.\\
\indent The proof in case that $f\mathcal{A}f$ has no nonzero central ideal goes in a similar way.
\end{proof}

In \cite[Theorem 2]{Cheung-Commuting}, Cheung considers commuting mappings of triangular algebras and introduces the idea that there exists $m_0$ in $e\mathcal{A}f$ such that
$$\mathcal{Z}(\mathcal{A})=\{\begin{array}{l|l}
                            a+b & a\in \mathcal{Z}(e\mathcal{A}e),~b\in \mathcal{Z}(f\mathcal{A}f),~
                                    am_0=m_0b
                            \end{array}\},$$
which comes out to be very useful in his paper. Later in \cite[Theorem 3.6]{Xiao-Wei} and \cite[Theorem 3.4]{Li-Wei}, the authors use some similar idea that there exists $m_0$ in $e\mathcal{A}f$ and $t_0$ in $f\mathcal{A}e$ such that
$$\mathcal{Z}(\mathcal{A})=\{\begin{array}{l|l}
                            a+b & a\in \mathcal{Z}(e\mathcal{A}e),~b\in \mathcal{Z}(f\mathcal{A}f),~
                                    am_0=m_0b,~t_0a=bt_0
                            \end{array}\},$$
which also turns out to play an important role in discussing mappings on generalized matrix algebras.

\begin{lemma}\label{S_11}
%Let $\mathcal{A}$, $e$ and $\varphi$ be as in Theorem \ref{S_4}. Suppose that $\mathcal{A}$ is $2$- and $(n-1)$-torsion free.
Let $\varphi$ be a Lie $n$-derivation on $\mathcal{A}$. Suppose that $\mathcal{A}$ is $2$- and $(n-1)$-torsion free, and that $\mathcal{A}$ satisfies the property (\ref{1.6}). If
\begin{description}
\item[(i)] $f\varphi(e\mathcal{A}e)f\subseteq f\mathcal{Z}(\mathcal{A})f$ and $e\varphi(f\mathcal{A}f)e\subseteq e\mathcal{Z}(\mathcal{A})e$,
\end{description}
\noindent and if one of the following statements holds:
\begin{description}
\item[(ii-1)] $\mathcal{Z}(\mathcal{A})=\{\begin{array}{l|l}
                            a+b & a\in e\mathcal{Z}(\mathcal{A})e,~b\in f\mathcal{Z}(\mathcal{A})f,~
                                    am_0=m_0b
                            \end{array}\}$ for some $m_0\in e\mathcal{A}f$,
\item[(ii-2)] $\mathcal{Z}(\mathcal{A})=\{\begin{array}{l|l}
                            a+b & a\in e\mathcal{Z}(\mathcal{A})e,~b\in f\mathcal{Z}(\mathcal{A})f,~
                                    t_0a=bt_0
                            \end{array}\}$ for some $t_0\in f\mathcal{A}e$,
\end{description}
then $\varphi$ satisfies $e\varphi(tm)e+f\varphi(mt)f\in\mathcal{Z}(\mathcal{A})$ for each $m\in e\mathcal{A}f$ and $t\in f\mathcal{A}e$.
\end{lemma}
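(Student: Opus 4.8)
The plan is to reduce the asserted centrality condition to a single multiplicative identity and then to exploit the rigidity supplied by $m_0$ (resp. $t_0$). Since only hypothesis (i) is used in the first part of the proof of Theorem \ref{S_4}, I may invoke Remark \ref{S_3} to assume $e\varphi(e)f=f\varphi(e)e=0$, and Claims 1, 2, 3 of that proof remain valid verbatim; in particular the maps $d$ and $\delta$ of (\ref{S.11}) and (\ref{S.12}) are defined, and exactly as in the proof of Lemma \ref{S_9}, $d$ is a derivation on every Peirce product except possibly on $e\mathcal{A}f\cdot f\mathcal{A}e$ and $f\mathcal{A}e\cdot e\mathcal{A}f$. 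I record the two ``defects''
\[
  \epsilon(m,t)=d(mt)-d(m)t-md(t)=e\varphi(tm)e-\tau^{-1}(f\varphi(mt)f),
\]
\[
  \eta(m,t)=d(tm)-d(t)m-td(m)=f\varphi(mt)f-\tau(e\varphi(tm)e),
\]
which by (i) lie in $e\mathcal{Z}(\mathcal{A})e$ and $f\mathcal{Z}(\mathcal{A})f$ respectively and satisfy $\eta(m,t)=-\tau(\epsilon(m,t))$. Using $\tau^{-1}(f\varphi(mt)f)m_0=m_0f\varphi(mt)f$ from Lemma \ref{S_2}, one computes $e\varphi(tm)e\cdot m_0-m_0\cdot f\varphi(mt)f=\epsilon(m,t)m_0$. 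As the two corner pieces already lie in $e\mathcal{Z}(\mathcal{A})e$ and $f\mathcal{Z}(\mathcal{A})f$, characterization (ii-1) shows the desired conclusion $e\varphi(tm)e+f\varphi(mt)f\in\mathcal{Z}(\mathcal{A})$ is \emph{equivalent} to $\epsilon(m,t)m_0=0$; dually, (ii-2) makes it equivalent to $t_0\epsilon(m,t)=0$.

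The key algebraic step is an associativity identity for $d$. Writing $d(m_1tm_2)$ first as $d((m_1t)m_2)$ and then as $d(m_1(tm_2))$ and using that $d$ is a genuine derivation on $e\mathcal{A}e\cdot e\mathcal{A}f$ and on $e\mathcal{A}f\cdot f\mathcal{A}f$ (which needs only (i)), all the honest terms cancel and I obtain
\[
  \epsilon(m_1,t)m_2=m_1\eta(m_2,t)\qquad(m_1,m_2\in e\mathcal{A}f,\ t\in f\mathcal{A}e).
\]
Since $\eta(m_2,t)=-\tau(\epsilon(m_2,t))$ and $m_1\tau(c)=cm_1$ for $c\in e\mathcal{Z}(\mathcal{A})e$, this reads $\epsilon(m_1,t)m_2=-\epsilon(m_2,t)m_1$; taking $m_1=m_2=m$ and using that $\mathcal{A}$ is $2$-torsion free gives $\epsilon(m,t)m=0$ for all $m,t$. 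The symmetric computation on $t_1mt_2$ gives $\eta(m,t_1)t_2=t_1\epsilon(m,t_2)$, whence $t_1\epsilon(m,t_2)=-t_2\epsilon(m,t_1)$ and so $t\epsilon(m,t)=0$.

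It remains to upgrade these ``diagonal'' vanishings, and this is where $m_0$ (resp. $t_0$) does the work. Under (ii-1), specialising $\epsilon(m,t)m=0$ to $m=m_0$ gives $\epsilon(m_0,t)m_0=0$; applying (ii-1) to the pair $a=\epsilon(m_0,t)\in e\mathcal{Z}(\mathcal{A})e$ and $b=0$ (the constraint $am_0=m_0b$ being precisely $\epsilon(m_0,t)m_0=0$) yields $\epsilon(m_0,t)\in\mathcal{Z}(\mathcal{A})$. But a central element lying in $e\mathcal{A}e$ annihilates $e\mathcal{A}f$ on the right and $f\mathcal{A}e$ on the left, so property (\ref{1.6}) forces $\epsilon(m_0,t)=0$, hence $\eta(m_0,t)=0$. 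Feeding $m_2=m_0$ into the associativity identity then gives $\epsilon(m,t)m_0=m\eta(m_0,t)=0$ for every $m,t$, which is the reduced condition. The case (ii-2) is entirely parallel: from $t_0\epsilon(m,t_0)=0$ one applies (ii-2) with $a=0$ and $b=\tau(\epsilon(m,t_0))$ to get $\tau(\epsilon(m,t_0))\in\mathcal{Z}(\mathcal{A})$, whence $\epsilon(m,t_0)=0$ by (\ref{1.6}), and then $t_0\epsilon(m,t)=-t\epsilon(m,t_0)=0$. I expect the main obstacle to be isolating the correct associativity identity and recognising that feeding the trivial second component $b=0$ (resp. $a=0$) into (ii-1) (resp. (ii-2)) is exactly what converts the merely diagonal vanishing into genuine centrality, after which (\ref{1.6}) finishes the argument.
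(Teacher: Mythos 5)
Your proposal is correct and follows essentially the same route as the paper's proof: both isolate the defect $\epsilon(m,t)=d(mt)-d(m)t-md(t)=e\varphi(tm)e-\tau^{-1}(f\varphi(mt)f)$, obtain $\epsilon(m,t)m=0$ from the two factorizations of $d(mtm)$ together with $2$-torsion-freeness, and then use $m_0$ (resp.\ $t_0$), the description of $\mathcal{Z}(\mathcal{A})$ in (ii-1) (resp.\ (ii-2)) and the property (\ref{1.6}) to force first $\epsilon(m_0,t)=0$ and then $\epsilon(m,t)m_0=0$. Your bilinear identity $\epsilon(m_1,t)m_2=-\epsilon(m_2,t)m_1$ is just the polarized form of the paper's computation of $\epsilon(m+m_0,t)(m+m_0)$, so the two arguments coincide in substance.
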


\begin{proof}
Without loss of generality, we suppose that (i) and (ii-1) hold. Discussing similarly as Theorem \ref{S_4}, we obtain same results as Claims 1, 2 and 3 in Theorem \ref{S_4}. According to (i) and the definition (\ref{S.11}) of $d$, we conclude that for each $a$ in $e\mathcal{A}e$, $m$ in $e\mathcal{A}f$, $t$ in $f\mathcal{A}e$ and $b$ in $f\mathcal{A}f$,
\begin{align}
    &d(am)=d(a)m+ad(m),\label{S.30-1}\\
    &d(mb)=md(b)+d(m)b,\label{S.30-2}\\
    &d(m)t+md(t)=d(mt)+\tau^{-1}(f\varphi(mt)f)-e\varphi(tm)e,\label{S.31-1}\\
    &d(t)m+td(m)=d(tm)+\tau(e\varphi(tm)e)-f\varphi(mt)f.\label{S.31-2}
\end{align}
For each $m$ in $e\mathcal{A}f$ and $t$ in $f\mathcal{A}e$, it follows from (\ref{S.30-1}) and (\ref{S.30-2}) that
$$\begin{array}{lll}
d(mtm)=d((mt)m)=d(mt)m+mtd(m)& \textrm{and} &d(mtm)=d(m(tm))=md(tm)+d(m)tm.
\end{array}$$
Thus,
\begin{equation}\label{S.32}
  (d(mt)-d(m)t)m=m(d(tm)-td(m)).
\end{equation}
Denote that $\epsilon(m,t)=e\varphi(tm)e-\tau^{-1}(f\varphi(mt)f)$ for $m$ in $e\mathcal{A}f$ and $t$ in $f\mathcal{A}e$.
By (\ref{S.31-1}) and (\ref{S.31-2}), we have that
$$\begin{array}{l}
    d(mt)-d(m)t=md(t)+\epsilon(m,t),\\
    d(tm)-td(m)=d(t)m-\tau(\epsilon(m,t)).
    \end{array}$$
In view of (\ref{S.32}), it follows that
$$md(t)m+\epsilon(m,t)m=md(t)m-m\tau(\epsilon(m,t)).$$
Thus, $2\epsilon(m,t)m=0$. Since $\mathcal{A}$ is $2$-torsion free, we have
\begin{equation}\label{S.33}
\epsilon(m,t)m=0
\end{equation}
for each $m$ in $e\mathcal{A}f$ and $t$ in $f\mathcal{A}e$. Let $m_0$ be as in (ii-1), then $\epsilon(m_0,t)m_0=0=m_00$. It follows from (ii-1) that $\epsilon(m_0,t)+0\in\mathcal{Z}(\mathcal{A})$. Thus, $\epsilon(m_0,t)=0$.
Since (\ref{S.33}) and
$$0=\epsilon(m+m_0,t)(m+m_0)=\epsilon(m,t)m+\epsilon(m,t)m_0+\epsilon(m_0,t)(m+m_0),$$
we conclude that $\epsilon(m,t)m_0=0=m_00$, which follows from (ii-1) that $\epsilon(m,t)+0\in\mathcal{Z}(\mathcal{A})$. Thus, $\epsilon(m,t)=0$ and $e\varphi(tm)e+f\varphi(mt)f\in\mathcal{Z}(\mathcal{A})$ for each $m$ in $e\mathcal{A}f$ and $t$ in $f\mathcal{A}e$.\\
\indent The proof in case that (i) and (ii-2) hold goes in a similar way.
\end{proof}

Let $\tilde{\mathcal{A}}$ be an arbitrary algebra. Denote $\mathcal{S}(\tilde{\mathcal{A}})$ as the subalgebra of $\tilde{\mathcal{A}}$ which is generated with all idempotents and commutators of $\tilde{\mathcal{A}}$. We would mention that if $\mathcal{A}$ is a unital algebra with a nontrivial idempotent $e$, then
$$\mathcal{S}(\mathcal{A})=(\mathcal{S}(e\mathcal{A}e)+e\mathcal{A}f\cdot f\mathcal{A}e)+e\mathcal{A}f+ f\mathcal{A}e+(\mathcal{S}(f\mathcal{A}f)+f\mathcal{A}e\cdot e\mathcal{A}f).$$
If $e\mathcal{A}f\cdot f\mathcal{A}e=f\mathcal{A}e\cdot e\mathcal{A}f=\{0\}$, we have that $\mathcal{A}=\mathcal{S}(\mathcal{A})$ if and only if $e\mathcal{A}e=\mathcal{S}(e\mathcal{A}e)$ and $f\mathcal{A}f=\mathcal{S}(f\mathcal{A}f)$. In \cite[Theorem 11]{Cheung-Lie}, Cheung arises the idea that $e\mathcal{A}e=\mathcal{S}(e\mathcal{A}e)$ or $f\mathcal{A}f=\mathcal{S}(f\mathcal{A}f)$, which turns out to play an important role in discussing sufficient conditions for Lie derivations to be standard on triangular algebras. Later in \cite[Theorem 3.4]{Mokhtari-Vishki}, the authors use this idea to consider Lie derivations on generalized matrix algebras. In the following lemma, we will use this idea to consider Lie $n$-derivations on unital algebras with nontrivial idempotents.
Besides, we also consider a very useful condition that for each $x$ in $\mathcal{A}$,
\begin{equation}\label{S.34}
  [x,\mathcal{A}]\subseteq\mathcal{Z}(\mathcal{A})\ \ \textrm{implies}\ \ x\in\mathcal{Z}(\mathcal{A}).
\end{equation}
That is, $$[[x,\mathcal{A}],\mathcal{A}]=\{0\}\ \ \textrm{implies}\ \ [x,\mathcal{A}]=\{0\},$$ which is equivalent to the condition that there exists no nonzero central inner derivation of $\mathcal{A}$. Important examples of algebras satisfying (\ref{S.34}) include commutative algebras, prime algebras, triangular algebras and matrix algebras. We would mention that if $\mathcal{A}$ is a unital algebra with a nontrivial idempotent $e$ satisfying the property (\ref{1.6}), then $\mathcal{A}$ satisfies (\ref{S.34}), but we cannot confirm that $e\mathcal{A}e$ or $f\mathcal{A}f$ satisfies (\ref{S.34}). In \cite[Theorem 5.9]{Benkovic-Eremita} and \cite[Theorem 1]{Wang-Wang}, the authors consider multiplicative Lie $n$-derivations of triangular algebras and generalized matrix algebras.

\begin{lemma}\label{S_12}
%Let $\mathcal{A}$, $e$ and $\varphi$ be as in Theorem \ref{S_4}. Suppose that $\mathcal{A}$ is $2$- and $(n-1)$-torsion free.
Let $\varphi$ be a Lie $n$-derivation on $\mathcal{A}$. Suppose that $\mathcal{A}$ is $2$- and $(n-1)$-torsion free, and that $\mathcal{A}$ satisfies the property (\ref{1.6}). If one of the following statements holds:
\begin{description}
\item[(i)] $e\mathcal{A}e=\mathcal{S}(e\mathcal{A}e)$ and $f\mathcal{A}f=\mathcal{S}(f\mathcal{A}f)$;
\item[(ii)] $e\mathcal{A}e=\mathcal{S}(e\mathcal{A}e)$ and $\mathcal{Z}(e\mathcal{A}e)=e\mathcal{Z}(\mathcal{A})e$;
\item[(iii)] $f\mathcal{A}f=\mathcal{S}(f\mathcal{A}f)$ and $\mathcal{Z}(f\mathcal{A}f)=f\mathcal{Z}(\mathcal{A})f$;
\item[(iv)] $e\mathcal{A}e$ or $f\mathcal{A}f$ satisfies (\ref{S.34}) when $n\geq 3$, $\mathcal{Z}(e\mathcal{A}e)=e\mathcal{Z}(\mathcal{A})e$ and $\mathcal{Z}(f\mathcal{A}f)=f\mathcal{Z}(\mathcal{A})f$;
\end{description}
then $\varphi$ satisfies $f\varphi(e\mathcal{A}e)f\subseteq f\mathcal{Z}(\mathcal{A})f$ and $e\varphi(f\mathcal{A}f)e\subseteq e\mathcal{Z}(\mathcal{A})e$.
\end{lemma}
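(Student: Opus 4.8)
The plan is to fix the two sets $\mathcal{B}=\{a\in e\mathcal{A}e : f\varphi(a)f\in f\mathcal{Z}(\mathcal{A})f\}$ and $\mathcal{C}=\{b\in f\mathcal{A}f : e\varphi(b)e\in e\mathcal{Z}(\mathcal{A})e\}$, and to show that each of (i)--(iv) forces $\mathcal{B}=e\mathcal{A}e$ and $\mathcal{C}=f\mathcal{A}f$, which is exactly the assertion. The hypotheses here include (\ref{1.6}) and torsion-freeness, so Claims 1--3 in the proof of Theorem \ref{S_4} are available verbatim, as they never use condition (i) there (this is already noted in Lemmas \ref{S_9} and \ref{S_11}); in particular $\varphi(e)\in\mathcal{Z}(\mathcal{A})$, so $e\in\mathcal{B}$, and the four product formulas of Claim 3 hold. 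From those I would first extract two ``second order'' relations by evaluating $\varphi$ on a triple product in two ways. Comparing $\varphi(a_1(a_2m))$ with $\varphi((a_1a_2)m)$ yields, for all $m\in e\mathcal{A}f$, $[e\varphi(a_1a_2)e-e\varphi(a_1)e\,a_2-a_1e\varphi(a_2)e]m=m\,f\varphi(a_1a_2)f-a_2m\,f\varphi(a_1)f-a_1m\,f\varphi(a_2)f$, together with its mirror on $f\mathcal{A}e$ coming from $\varphi(t(a_1a_2))$; comparing $\varphi(a(mb))$ with $\varphi((am)b)$ (and $\varphi(b(ta))$ with $\varphi((bt)a)$) yields the transfer relations $[e\varphi(b)e,a]\,m=m\,[b,f\varphi(a)f]$ and $t\,[e\varphi(b)e,a]=[b,f\varphi(a)f]\,t$.

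Next I would prove that $\mathcal{B}$ is a subalgebra containing every idempotent and every commutator of $e\mathcal{A}e$ (symmetrically for $\mathcal{C}$). For the commutator case one subtracts the first second-order relation for $(a_1,a_2)$ and for $(a_2,a_1)$; for closure under products one assumes $f\varphi(a_1)f,f\varphi(a_2)f\in f\mathcal{Z}(\mathcal{A})f$ and replaces each $m\,f\varphi(a_i)f$ by $\tau^{-1}(f\varphi(a_i)f)m$ via Lemma \ref{S_2}(ii). In both situations the identity and its mirror collapse to $w\,m=m\,f\varphi(x)f$ and $f\varphi(x)f\,t=t\,w$ for a single $w\in e\mathcal{A}e$ (with $x=a_1a_2$ or $x=[a_1,a_2]$), whence $w+f\varphi(x)f\in\mathcal{Z}(\mathcal{A})$ by Lemma \ref{S_2}(i), i.e. $x\in\mathcal{B}$. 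The genuinely delicate point, and the main obstacle, is the idempotent case: for $p=p^2\in e\mathcal{A}e$ the identity forced by $p\cdot pm=pm$ carries a spurious term $-2pm\,f\varphi(p)f$, so centrality cannot be read off directly. My plan is to split through the orthogonal idempotents $p$ and $e-p$: restricting to $p\mathcal{A}f$ and to $(e-p)\mathcal{A}f$ annihilates the bad term and gives $w\,m=m\,f\varphi(p)f$ with $w=p\,e\varphi(p)e\,p+(e-p)\,e\varphi(p)e\,(e-p)$, while the $f\mathcal{A}e$-mirror gives $f\varphi(p)f\,t=t\,w$; Lemma \ref{S_2}(i) then places $w+f\varphi(p)f\in\mathcal{Z}(\mathcal{A})$, so $p\in\mathcal{B}$. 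Getting this block-diagonal bookkeeping right is where the real work lies.

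The transfer relations then supply the bridge between the two corners. Feeding $[e\varphi(b)e,a]\,m=m\,[b,f\varphi(a)f]$ and its mirror into Lemma \ref{S_2}(i) shows $[e\varphi(b)e,a]+[b,f\varphi(a)f]\in\mathcal{Z}(\mathcal{A})$, hence $[e\varphi(b)e,a]\in e\mathcal{Z}(\mathcal{A})e$ and $[b,f\varphi(a)f]\in f\mathcal{Z}(\mathcal{A})f$ for all $a,b$. More usefully, once one corner has been settled, the other follows: if $f\varphi(a)f\in f\mathcal{Z}(\mathcal{A})f$ for every $a$, then $[b,f\varphi(a)f]=0$, so the transfer relations give $[e\varphi(b)e,a]m=0=t\,[e\varphi(b)e,a]$ for all $m,t$, and property (\ref{1.6}) forces $[e\varphi(b)e,a]=0$; thus $e\varphi(b)e\in\mathcal{Z}(e\mathcal{A}e)$, which a center-matching hypothesis upgrades to $e\varphi(b)e\in e\mathcal{Z}(\mathcal{A})e$ (and the reverse implication holds symmetrically).

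Finally I would dispatch the four cases. Under (i), $\mathcal{B}$ is a subalgebra containing idempotents and commutators, so $\mathcal{B}\supseteq\mathcal{S}(e\mathcal{A}e)=e\mathcal{A}e$, and symmetrically $\mathcal{C}\supseteq\mathcal{S}(f\mathcal{A}f)=f\mathcal{A}f$. Under (ii), $e\mathcal{A}e=\mathcal{S}(e\mathcal{A}e)$ gives $\mathcal{B}=e\mathcal{A}e$, and then the bridge with $\mathcal{Z}(e\mathcal{A}e)=e\mathcal{Z}(\mathcal{A})e$ gives $\mathcal{C}=f\mathcal{A}f$; case (iii) is the mirror image. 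Case (iv) is the one where the $\mathcal{S}$-argument is unavailable, so I would instead produce one corner directly: the transfer relations already give $[e\varphi(b)e,a]\in e\mathcal{Z}(\mathcal{A})e=\mathcal{Z}(e\mathcal{A}e)$, i.e. $[e\varphi(b)e,e\mathcal{A}e]\subseteq\mathcal{Z}(e\mathcal{A}e)$. When $n=2$ the stronger vanishing $[a,e\varphi(b)e]=0$ already falls out of $\varphi([a,b])=[\varphi(a),b]+[a,\varphi(b)]$ with $[a,b]=0$, which is precisely why (\ref{S.34}) is only required for $n\geq3$; in that range condition (\ref{S.34}) on $e\mathcal{A}e$ upgrades $[e\varphi(b)e,e\mathcal{A}e]\subseteq\mathcal{Z}(e\mathcal{A}e)$ to $e\varphi(b)e\in\mathcal{Z}(e\mathcal{A}e)=e\mathcal{Z}(\mathcal{A})e$, settling $\mathcal{C}=f\mathcal{A}f$, after which the bridge together with $\mathcal{Z}(f\mathcal{A}f)=f\mathcal{Z}(\mathcal{A})f$ settles $\mathcal{B}=e\mathcal{A}e$ (the subcase where $f\mathcal{A}f$ satisfies (\ref{S.34}) being symmetric).
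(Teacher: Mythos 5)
Your proposal is correct and follows essentially the same route as the paper: define the set of elements $a$ with $f\varphi(a)f\in f\mathcal{Z}(\mathcal{A})f$, show via the product formulas of Claim 3 of Theorem \ref{S_4} and Lemma \ref{S_2} that it is a subalgebra containing all idempotents and commutators (hence all of $\mathcal{S}(e\mathcal{A}e)$), and transfer between the two corners through the relation $[f\varphi(a)f,b]+[a,e\varphi(b)e]\in\mathcal{Z}(\mathcal{A})$ together with (\ref{1.6}) and, in case (iv), condition (\ref{S.34}). The only deviations are cosmetic: you derive the transfer relation from associativity of $a(mb)=(am)b$ rather than from $p_n(a,b,m,f,\dots,f)=0$, and you handle idempotents by splitting $m$ into $pm$ and $(e-p)m$ rather than by the paper's multiply-by-$a$-and-substitute step — both yield the same element $p\,e\varphi(p)e\,p+(e-p)e\varphi(p)e(e-p)$.
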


\begin{proof}
Since $\varphi$ is a Lie $n$-derivation on $\mathcal{A}$, discussing similarly as Theorem \ref{S_4}, we obtain same results as Claims 1, 2 and 3 in Theorem \ref{S_4}. Thus for each $a$ in $e\mathcal{A}e$, $m$ in $e\mathcal{A}f$, $t$ in $f\mathcal{A}e$ and $b$ in $f\mathcal{A}f$, we have that
\begin{align}
    e\varphi(am)f=&e\varphi(a)e\cdot m+a\cdot e\varphi(m)f-m\cdot f\varphi(a)f,\label{S.35-1}\\
    e\varphi(mb)f=&m\cdot f\varphi(b)f+e\varphi(m)f\cdot b-e\varphi(b)e\cdot m,\label{S.35-2}\\
    f\varphi(ta)e=&t\cdot e\varphi(a)e+f\varphi(t)e\cdot a-f\varphi(a)f\cdot t,\label{S.35-3}\\
    f\varphi(bt)e=&f\varphi(b)f\cdot t+b\cdot f\varphi(t)e-t\cdot e\varphi(b)e.\label{S.35-4}
\end{align}
Then the remaining proof could be organized by the following claims.\\

\noindent$\begin{array}{ll}\textbf{Claim 1.}
            & e\mathcal{A}e=\mathcal{S}(e\mathcal{A}e)~\textrm{implies}~f\varphi(e\mathcal{A}e)f\subseteq f\mathcal{Z}(\mathcal{A})f.
            \end{array}$\\
\indent Let $A_0=\{\begin{array}{l|l}
a\in e\mathcal{A}e & f\varphi(a)f\in f\mathcal{Z}(\mathcal{A})f
\end{array}\}$. We only need to prove that $A_0=e\mathcal{A}e$.\\
\indent According to the linearity of $\varphi$, we have that $A_0$ is a $\mathcal{R}$-submodule of $e\mathcal{A}e$. Take arbitrary elements $a,a'$ in $A_0$. We have $f\varphi(a)f, f\varphi(a')f\in f\mathcal{Z}(\mathcal{A})f$. By Lemma \ref{S_2}, we have that for each $m$ in $e\mathcal{A}f$ and $t$ in $f\mathcal{A}e$,
\begin{align*}
    m\cdot f\varphi(a)f=&\tau^{-1}(f\varphi(a)f)\cdot m,\\
    m\cdot f\varphi(a')f=&\tau^{-1}(f\varphi(a')f)\cdot m,\\
    f\varphi(a)f\cdot t=&t\cdot\tau^{-1}(f\varphi(a)f),\\
    f\varphi(a')f\cdot t=&t\cdot\tau^{-1}(f\varphi(a')f).
\end{align*}
It follows from (\ref{S.35-1}) and (\ref{S.35-3}) that
\begin{align*}
  e\varphi((aa')m)f=&e\varphi(aa')e\cdot m+aa'\cdot e\varphi(m)f-m\cdot f\varphi(aa')f,\\
  e\varphi(a(a'm))f=&e\varphi(a)e\cdot a'm+a\cdot e\varphi(a'm)f-a'm\cdot f\varphi(a)f\\
  =&e\varphi(a)e\cdot a'm+a\cdot e\varphi(a')e\cdot m+aa'\cdot e\varphi(m)f-am\cdot f\varphi(a')f-a'm\cdot f\varphi(a)f\\
  =&(e\varphi(a)e\cdot a'+a\cdot e\varphi(a')e-a\tau^{-1}(f\varphi(a')f)-a'\tau^{-1}(f\varphi(a)f))\cdot m+aa'\cdot e\varphi(m)f,\\
  f\varphi(t(aa'))e=&t\cdot e\varphi(aa')e+f\varphi(t)e\cdot aa'-f\varphi(aa')f\cdot t,\\
  f\varphi((ta)a')e=&ta\cdot e\varphi(a')e+f\varphi(ta)e\cdot a'-f\varphi(a')f\cdot ta\\
  =&ta\cdot e\varphi(a')e+t\cdot e\varphi(a)e\cdot a'+f\varphi(t)e\cdot aa'-f\varphi(a)f\cdot ta'-f\varphi(a')f\cdot ta\\
  =&t\cdot(a\cdot e\varphi(a')e+e\varphi(a)e\cdot a'-a'\tau^{-1}(f\varphi(a)f)-a\tau^{-1}(f\varphi(a')f))
  +f\varphi(t)e\cdot aa'.
\end{align*}
\noindent Then
\begin{align*}
  &(e\varphi(aa')e-e\varphi(a)e\cdot a'-a\cdot e\varphi(a')e+a\tau^{-1}(f\varphi(a')f)+a'\tau^{-1}(f\varphi(a)f))\cdot m=m\cdot f\varphi(aa')f,\\
  &f\varphi(aa')f\cdot t=t\cdot(e\varphi(aa')e-e\varphi(a)e\cdot a'-a\cdot e\varphi(a')e+a\tau^{-1}(f\varphi(a')f)+a'\tau^{-1}(f\varphi(a)f)).
\end{align*}
\noindent According to Lemma \ref{S_2}, $f\varphi(aa')f\in f\mathcal{Z}(\mathcal{A})f$. That is, $aa'\in A_0$. Thus, $A_0$ is a subalgebra of $e\mathcal{A}e$.\\
\indent Take an arbitrary element $a$ in $e\mathcal{A}e$ satisfying $a=a^2$. By (\ref{S.35-1}) and (\ref{S.35-3}), we have that for each $m$ in $e\mathcal{A}f$ and $t$ in $f\mathcal{A}e$,
\begin{align*}
  e\varphi(am)f=&e\varphi(a(am))f=e\varphi(a)e\cdot am+a\cdot e\varphi(am)f-am\cdot f\varphi(a)f\\
  =&e\varphi(a)e\cdot am+a\cdot e\varphi(a)e\cdot m+a\cdot e\varphi(m)f-2am\cdot f\varphi(a)f,\\
  f\varphi(ta)e=&f\varphi((ta)a)e=ta\cdot e\varphi(a)e+f\varphi(ta)e\cdot a-f\varphi(a)f\cdot ta\\
  =&ta\cdot e\varphi(a)e+t\cdot e\varphi(a)e\cdot a+f\varphi(t)e\cdot a-2f\varphi(a)f\cdot ta.
\end{align*}
\noindent Then combining with (\ref{S.35-1}) and (\ref{S.35-3}), we obtain that
\begin{align}
    &(e\varphi(a)e-e\varphi(a)e\cdot a-a\cdot e\varphi(a)e)\cdot m+2am\cdot f\varphi(a)f=m\cdot f\varphi(a)f,\label{S.36-1}\\
    &t\cdot(e\varphi(a)e-a\cdot e\varphi(a)e-e\varphi(a)e\cdot a)+2f\varphi(a)f\cdot ta=f\varphi(a)f\cdot t. \label{S.36-2}
\end{align}
Left and right multiplication by $a$ respectively implies that
\begin{align}
    &am\cdot f\varphi(a)f=a\cdot e\varphi(a)e\cdot am,\label{S.37-1}\\
    &f\varphi(a)f\cdot ta=ta\cdot e\varphi(a)e\cdot a. \label{S.37-2}
\end{align}
In view of (\ref{S.37-1}) and (\ref{S.37-2}), equations (\ref{S.36-1}) and (\ref{S.36-2}) can be reformed to
\begin{align*}
    &(e\varphi(a)e-e\varphi(a)e\cdot a-a\cdot e\varphi(a)e+2a\cdot e\varphi(a)e\cdot a)\cdot m=m\cdot f\varphi(a)f,\\
    &t\cdot(e\varphi(a)e-e\varphi(a)e\cdot a-a\cdot e\varphi(a)e+2a\cdot e\varphi(a)e\cdot a)=f\varphi(a)f\cdot t.
\end{align*}
\noindent According to Lemma \ref{S_2}, $f\varphi(a)f\in f\mathcal{Z}(\mathcal{A})f$. That is, $a=a^2\in A_0$. Thus, $A_0$ contains all idempotents in $e\mathcal{A}e$.\\
\indent Take arbitrary elements $a,a'$ in $e\mathcal{A}e$. By (\ref{S.35-1}) and (\ref{S.35-3}), we have that for each $m$ in $e\mathcal{A}f$ and $t$ in $f\mathcal{A}e$,
\begin{align*}
  e\varphi([a,a']m)f=&e\varphi([a,a'])e\cdot m+[a,a']\cdot e\varphi(m)f-m\cdot f\varphi([a,a'])f,\\
  e\varphi([a,a']m)f=&e\varphi(aa'm)f-e\varphi(a'am)f\\
  =&[e\varphi(a)e,a']m+[a,e\varphi(a')e]\cdot m+[a,a']\cdot e\varphi(m)f,\\
  f\varphi(t[a,a'])e=&t\cdot e\varphi([a,a'])e+f\varphi(t)e\cdot [a,a']-f\varphi([a,a'])f\cdot t,\\
  f\varphi(t[a,a'])e=&f\varphi(taa')e-f\varphi(ta'a)e\\
  =&t\cdot[a,e\varphi(a')e]+t\cdot[e\varphi(a)e,a']+f\varphi(t)e\cdot[a,a'].
\end{align*}
\noindent Then
\begin{align*}
    &(e\varphi([a,a'])e-[e\varphi(a)e,a']-[a,e\varphi(a')e])\cdot m=m\cdot f\varphi([a,a'])f,\\
    &f\varphi([a,a'])f\cdot t=t\cdot (e\varphi([a,a'])e-[a,e\varphi(a')e]-[e\varphi(a)e,a']).
\end{align*}
\noindent According to Lemma \ref{S_2}, $f\varphi([a,a'])f\in f\mathcal{Z}(\mathcal{A})f$. That is, $[a,a']\in A_0$. Thus, $A_0$ contains all commutators in $e\mathcal{A}e$.\\
\indent Since $A_0$ is a subalgebra of $e\mathcal{A}e$, and $A_0$ contains all idempotent and commutators in $e\mathcal{A}e$, we have that $\mathcal{S}(e\mathcal{A}e)\subseteq A_0$. Since $e\mathcal{A}e=\mathcal{S}(e\mathcal{A}e)$, we conclude that $e\mathcal{A}e=A_0$.\\

\noindent$\begin{array}{ll}\textbf{Claim 2.}
            & f\mathcal{A}f=\mathcal{S}(f\mathcal{A}f)~\textrm{implies}~e\varphi(f\mathcal{A}f)e\subseteq e\mathcal{Z}(\mathcal{A})e.
            \end{array}$\\
\indent The proof of Claim 2 is similar to the proof of Claim 1.\\

\noindent$\begin{array}{ll}\textbf{Claim 3.}
            & \textrm{(i) implies that}~f\varphi(e\mathcal{A}e)f\subseteq f\mathcal{Z}(\mathcal{A})f~\textrm{and}~e\varphi(f\mathcal{A}f)e\subseteq e\mathcal{Z}(\mathcal{A})e.
            \end{array}$\\
\indent It's obvious according to Claim 1 and 2.\\

\noindent$\begin{array}{ll}\textbf{Claim 4.}
            & \textrm{(ii) implies that}~f\varphi(e\mathcal{A}e)f\subseteq f\mathcal{Z}(\mathcal{A})f~\textrm{and}~e\varphi(f\mathcal{A}f)e\subseteq e\mathcal{Z}(\mathcal{A})e.
            \end{array}$\\
\indent By Claim 1, we only need to prove $e\varphi(f\mathcal{A}f)e\subseteq e\mathcal{Z}(\mathcal{A})e$. If $n\geq 3$, for each $a$ in $e\mathcal{A}e$, $m$ in $e\mathcal{A}f$, $t$ in $f\mathcal{A}e$ and $b$ in $f\mathcal{A}f$, since $[a,b]=0$ and $p_n(a,b,m,f,...,f)=p_n(a,b,t,e,...,e)=0$, we obtain that
\begin{align*}
     0=&\varphi(p_n(a,b,m,f,...,f))=p_n(\varphi(a),b,m,f,...,f)+p_n(a,\varphi(b),m,f,...,f)\\
     =&[[\varphi(a),b]+[a,\varphi(b)],m],\\
     0=&\varphi(p_n(a,b,t,e,...,e))=p_n(\varphi(a),b,t,e,...,e)+p_n(a,\varphi(b),t,e,...,e)\\
     =&[[\varphi(a),b]+[a,\varphi(b)],t].
\end{align*}
According to Lemma \ref{S_2}, we have $$[f\varphi(a)f,b]+[a,e\varphi(b)e]\in\mathcal{Z}(\mathcal{A}).$$
By Claim 1, we have
$f\varphi(e\mathcal{A}e)f\subseteq f\mathcal{Z}(\mathcal{A})f\subseteq\mathcal{Z}(f\mathcal{A}f)$, and $[f\varphi(a)f,b]=0$. If $n\geq 3$, it follows that $[a,e\varphi(b)e]=0$ for each $a$ in $e\mathcal{A}e$ and $b$ in $f\mathcal{A}f$. If $n=2$, for each $a$ in $e\mathcal{A}e$ and $b$ in $f\mathcal{A}f$, we have that
$$0=\varphi([a,b])=[f\varphi(a)f,b]+[a,e\varphi(b)e]$$
which follows that $[a,e\varphi(b)e]=0$. Thus, we have $e\varphi(f\mathcal{A}f)e\subseteq\mathcal{Z}(e\mathcal{A}e)$ for each $n\geq 2$. Since $\mathcal{Z}(e\mathcal{A}e)=e\mathcal{Z}(\mathcal{A})e$, we conclude that $e\varphi(f\mathcal{A}f)e\subseteq e\mathcal{Z}(\mathcal{A})e$.\\

\noindent$\begin{array}{ll}\textbf{Claim 5.}
            & \textrm{(iii) implies that}~f\varphi(e\mathcal{A}e)f\subseteq f\mathcal{Z}(\mathcal{A})f~\textrm{and}~e\varphi(f\mathcal{A}f)e\subseteq e\mathcal{Z}(\mathcal{A})e.
            \end{array}$\\
\indent The proof of Claim 5 is similar to the proof of Claim 4.\\

\noindent$\begin{array}{ll}\textbf{Claim 6.}
            & \textrm{(iv) implies that}~f\varphi(e\mathcal{A}e)f\subseteq f\mathcal{Z}(\mathcal{A})f~\textrm{and}~e\varphi(f\mathcal{A}f)e\subseteq e\mathcal{Z}(\mathcal{A})e.
            \end{array}$\\
\indent If $n\geq 3$, suppose that $e\mathcal{A}e$ satisfies (\ref{S.34}). Similar to the proof of Claim 4, we have that
\begin{equation}\label{S.38}
[f\varphi(a)f,b]+[a,e\varphi(b)e]\in\mathcal{Z}(\mathcal{A})
\end{equation}
for each $a$ in $e\mathcal{A}e$ and $b$ in $f\mathcal{A}f$. Then
$[e\mathcal{A}e,e\varphi(f\mathcal{A}f)e]\subseteq e\mathcal{Z}(\mathcal{A})e\subseteq \mathcal{Z}(e\mathcal{A}e)$.
Since $e\mathcal{A}e$ satisfies (\ref{S.34}), it follows that
\begin{equation}\label{S.39}
e\varphi(f\mathcal{A}f)e\subseteq\mathcal{Z}(e\mathcal{A}e).
\end{equation}
Since $\mathcal{Z}(e\mathcal{A}e)=e\mathcal{Z}(\mathcal{A})e$, we have that $e\varphi(f\mathcal{A}f)e\subseteq e\mathcal{Z}(\mathcal{A})e$. On the other hand, by (\ref{S.39}), we have $[a,e\varphi(b)e]=0$ for each $a$ in $e\mathcal{A}e$ and $b$ in $f\mathcal{A}f$. In view of (\ref{S.38}), we obtain $[f\varphi(a)f,b]=0$ for each $a$ in $e\mathcal{A}e$ and $b$ in $f\mathcal{A}f$. That is, $f\varphi(e\mathcal{A}e)f\subseteq \mathcal{Z}(f\mathcal{A}f)$. Since $\mathcal{Z}(f\mathcal{A}f)=f\mathcal{Z}(\mathcal{A})f$, we obtain that $f\varphi(e\mathcal{A}e)f\subseteq f\mathcal{Z}(\mathcal{A})f$. The proof in case that $f\mathcal{A}f$ satisfies (\ref{S.34}) goes in a similar way.\\
\indent If $n=2$, for each $a$ in $e\mathcal{A}e$ and $b$ in $f\mathcal{A}f$, we have that
$$0=\varphi([a,b])=[f\varphi(a)f,b]+[a,e\varphi(b)e].$$
Then, $[f\varphi(a)f,b]=[a,e\varphi(b)e]=0$. That is, $f\varphi(e\mathcal{A}e)f\subseteq \mathcal{Z}(f\mathcal{A}f)$ and
$e\varphi(f\mathcal{A}f)e\subseteq\mathcal{Z}(e\mathcal{A}e)$. Since $\mathcal{Z}(f\mathcal{A}f)=f\mathcal{Z}(\mathcal{A})f$ and $\mathcal{Z}(e\mathcal{A}e)=e\mathcal{Z}(\mathcal{A})e$, we conclude that $f\varphi(e\mathcal{A}e)f\subseteq f\mathcal{Z}(\mathcal{A})f$ and
$e\varphi(f\mathcal{A}f)e\subseteq e\mathcal{Z}(\mathcal{A})e$.
\end{proof}

Associating previous results, we conclude the following corollary.

\begin{corollary}\label{S_13}
%Let $\mathcal{A}$, $e$ and $\varphi$ be as in Theorem \ref{S_4}. Suppose that $\mathcal{A}$ is $2$- and $(n-1)$-torsion free.
Let $\varphi$ be a Lie $n$-derivation on $\mathcal{A}$. Suppose that $\mathcal{A}$ is $2$- and $(n-1)$-torsion free, and that $\mathcal{A}$ satisfies the property (\ref{1.6}). If one of the following statements holds:
\begin{description}
\item[(i-1)] $f\varphi(e\mathcal{A}e)f\subseteq f\mathcal{Z}(\mathcal{A})f$ and $e\varphi(f\mathcal{A}f)e\subseteq e\mathcal{Z}(\mathcal{A})e$,
\item[(i-2)] $e\mathcal{A}e=\mathcal{S}(e\mathcal{A}e)$ and $f\mathcal{A}f=\mathcal{S}(f\mathcal{A}f)$,
\item[(i-3)] $e\mathcal{A}e=\mathcal{S}(e\mathcal{A}e)$ and $\mathcal{Z}(e\mathcal{A}e)=e\mathcal{Z}(\mathcal{A})e$,
\item[(i-4)] $f\mathcal{A}f=\mathcal{S}(f\mathcal{A}f)$ and $\mathcal{Z}(f\mathcal{A}f)=f\mathcal{Z}(\mathcal{A})f$,
\item[(i-5)] $e\mathcal{A}e$ or $f\mathcal{A}f$ satisfies (\ref{S.34}) when $n\geq 3$, $\mathcal{Z}(e\mathcal{A}e)=e\mathcal{Z}(\mathcal{A})e$ and $\mathcal{Z}(f\mathcal{A}f)=f\mathcal{Z}(\mathcal{A})f$,
\end{description}
and if one of the following statements also holds:
\begin{description}
\item[(ii-1)] $e\varphi(tm)e+f\varphi(mt)f\in\mathcal{Z}(\mathcal{A})$ for each $m\in e\mathcal{A}f$ and $t\in f\mathcal{A}e$.
\item[(ii-2)] $e\mathcal{A}e$ or $f\mathcal{A}f$ has no nonzero central ideal,
\item[(ii-3)] $\mathcal{Z}(\mathcal{A})=\{\begin{array}{l|l}
                            a+b & a\in e\mathcal{Z}(\mathcal{A})e,~b\in f\mathcal{Z}(\mathcal{A})f,~
                                    am_0=m_0b
                            \end{array}\}$ for some $m_0\in e\mathcal{A}f$,
\item[(ii-4)] $\mathcal{Z}(\mathcal{A})=\{\begin{array}{l|l}
                            a+b & a\in e\mathcal{Z}(\mathcal{A})e,~b\in f\mathcal{Z}(\mathcal{A})f,~
                                    t_0a=bt_0
                            \end{array}\}$ for some $t_0\in f\mathcal{A}e$,
\item[(ii-5)] $\mathcal{A}$ satisfies (\ref{S.22-2}),
\end{description}
then $\varphi$ is of the form $\varphi=d+\delta+\gamma$, where $d$ is a derivation on $\mathcal{A}$, $\delta$ is a singular Jordan derivation on $\mathcal{A}$, and $\gamma$ is a linear mapping from $\mathcal{A}$ into $\mathcal{Z}(\mathcal{A})$ vanishing on all $(n-1)-$th commutators of $\mathcal{A}$. \\
\indent In addition, $\delta$ is also an antiderivation on $\mathcal{A}$ when $\mathrm{(ii}$-$\mathrm{2)}$ or $\mathrm{(ii}$-$\mathrm{5)}$ holds.\\
\indent Furthermore, let us make a further assumption that $n$ is even, or that $f\varphi(e\mathcal{A}f)e=e\varphi(f\mathcal{A}e)f=\{0\}$. Then $\varphi$ is standard.
\end{corollary}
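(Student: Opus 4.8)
The plan is to reduce the entire statement to Theorem~\ref{S_4}, whose two hypotheses (i) and (ii) are precisely what the two families of conditions here are engineered to supply. Accordingly, I would first show that \emph{some} condition among (i-1)--(i-5) forces hypothesis (i) of Theorem~\ref{S_4}, namely $f\varphi(e\mathcal{A}e)f\subseteq f\mathcal{Z}(\mathcal{A})f$ and $e\varphi(f\mathcal{A}f)e\subseteq e\mathcal{Z}(\mathcal{A})e$; and then that, \emph{once} this hypothesis is in hand, some condition among (ii-1)--(ii-5) forces hypothesis (ii), namely $e\varphi(tm)e+f\varphi(mt)f\in\mathcal{Z}(\mathcal{A})$. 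The order matters, because the lemmas used in the second step take hypothesis (i) as a standing assumption.

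For the first step, condition (i-1) is literally hypothesis (i), so nothing is needed, while each of (i-2), (i-3), (i-4), (i-5) is exactly one of the four alternative hypotheses of Lemma~\ref{S_12}, so in these cases Lemma~\ref{S_12} delivers hypothesis (i) directly. For the second step I would argue by cases on the (ii-$*$) conditions: (ii-1) is hypothesis (ii) itself; given hypothesis (i), condition (ii-2) invokes Lemma~\ref{S_9}, while conditions (ii-3) and (ii-4) match the two alternative hypotheses (ii-1) and (ii-2) of Lemma~\ref{S_11}, each concluding hypothesis (ii). For (ii-5) the verification is even more direct and needs no appeal to (i): property (\ref{S.22-2}) gives $mt=tm=0$ for all $m\in e\mathcal{A}f$ and $t\in f\mathcal{A}e$, whence $e\varphi(tm)e+f\varphi(mt)f=0\in\mathcal{Z}(\mathcal{A})$. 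With both hypotheses verified, Theorem~\ref{S_4} yields the decomposition $\varphi=d+\delta+\gamma$ of the required form.

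It then remains to upgrade $\delta$ and to detect when it vanishes. When (ii-2) holds I would cite Corollary~\ref{S_10}, and when (ii-5) holds I would cite Lemma~\ref{S_7}(ii) (equivalently Corollary~\ref{S_8}); in either case $\delta$ is an antiderivation, settling the ``In addition'' clause. For the final ``standard'' clause I would reduce to Corollary~\ref{S_5}, which characterises standardness as the conjunction of hypotheses (i), (ii) and the extra condition $f\varphi(e\mathcal{A}f)e=e\varphi(f\mathcal{A}e)f=\{0\}$. If this vanishing is assumed outright, that condition holds and we are done. If instead $n$ is even, then the identities (\ref{S.9-2}) and (\ref{S.10-2}) read $f\varphi(m)e=(-1)^{n-1}f\varphi(m)e=-f\varphi(m)e$ and likewise for $t$, so $2$-torsion-freeness forces $f\varphi(e\mathcal{A}f)e=e\varphi(f\mathcal{A}e)f=\{0\}$; this is exactly the content of Corollary~\ref{S_6}.

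The statement is essentially a bookkeeping synthesis of the section's lemmas, so I do not expect a genuine mathematical obstacle. The one point requiring care is the logical dependency just noted: hypothesis (i) of Theorem~\ref{S_4} must be secured \emph{before} applying Lemmas~\ref{S_9} and~\ref{S_11}, since those lemmas presuppose it. Keeping the two steps in this order, and treating (ii-5) separately as the one case independent of (i), is what makes the case analysis clean.
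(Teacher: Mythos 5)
Your proposal is correct and matches the paper's intent exactly: the paper gives no explicit proof, stating only that the corollary follows by ``associating previous results,'' and the assembly you describe --- Lemma \ref{S_12} for (i-2)--(i-5), Lemmas \ref{S_9} and \ref{S_11} plus the trivial computation for (ii-2)--(ii-5), then Theorem \ref{S_4}, with Corollary \ref{S_10} and Lemma \ref{S_7} for the antiderivation clause and Corollaries \ref{S_5}/\ref{S_6} for standardness --- is precisely that synthesis. Your remark on securing hypothesis (i) before invoking Lemmas \ref{S_9} and \ref{S_11} is the right point of care.
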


\begin{remark}
Corollary \ref{S_13} improves \cite[Theorem 2.1]{Wang} and \cite[Theorem 5.1]{Benkovic} which consider Lie triple derivations and Lie $n$-derivations on a unital algebra with a nontrivial idempotent $e$ satisfying the property (\ref{1.6}).
\end{remark}

Now, we obtain the sufficient conditions under which every Lie $n$-derivation can be standard.

\begin{corollary}\label{S_14}
%Let $\mathcal{A}$ be a unital algebra with a nontrivial idempotent $e$ satisfying the property (\ref{1.6}). Suppose that $n$ is even and that $\mathcal{A}$ is $2$- and $(n-1)$-torsion free.
Suppose that $n$ is even, and that $\mathcal{A}$ is a $2$- and $(n-1)$-torsion free algebra satisfying the property (\ref{1.6}). If one of the following statements holds:
\begin{description}
\item[(i-1)] $e\mathcal{A}e=\mathcal{S}(e\mathcal{A}e)$ and $f\mathcal{A}f=\mathcal{S}(f\mathcal{A}f)$,
\item[(i-2)] $e\mathcal{A}e=\mathcal{S}(e\mathcal{A}e)$ and $\mathcal{Z}(e\mathcal{A}e)=e\mathcal{Z}(\mathcal{A})e$,
\item[(i-3)] $f\mathcal{A}f=\mathcal{S}(f\mathcal{A}f)$ and $\mathcal{Z}(f\mathcal{A}f)=f\mathcal{Z}(\mathcal{A})f$,
\item[(i-4)] $e\mathcal{A}e$ or $f\mathcal{A}f$ satisfies (\ref{S.34}) when $n\geq 3$, $\mathcal{Z}(e\mathcal{A}e)=e\mathcal{Z}(\mathcal{A})e$ and $\mathcal{Z}(f\mathcal{A}f)=f\mathcal{Z}(\mathcal{A})f$,
\end{description}
and if one of the following statements also holds:
\begin{description}
\item[(ii-1)] $e\mathcal{A}e$ or $f\mathcal{A}f$ has no nonzero central ideal,
\item[(ii-2)] $\mathcal{Z}(\mathcal{A})=\{\begin{array}{l|l}
                            a+b & a\in e\mathcal{Z}(\mathcal{A})e,~b\in f\mathcal{Z}(\mathcal{A})f,~
                                    am_0=m_0b
                            \end{array}\}$ for some $m_0\in e\mathcal{A}f$,
\item[(ii-3)] $\mathcal{Z}(\mathcal{A})=\{\begin{array}{l|l}
                            a+b & a\in e\mathcal{Z}(\mathcal{A})e,~b\in f\mathcal{Z}(\mathcal{A})f,~
                                    t_0a=bt_0
                            \end{array}\}$ for some $t_0\in f\mathcal{A}e$,
\item[(ii-4)] $\mathcal{A}$ satisfies (\ref{S.22-2}),
\end{description}
then every Lie $n$-derivation on $\mathcal{A}$ is standard.
\end{corollary}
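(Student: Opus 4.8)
The plan is to deduce the statement directly from Corollary \ref{S_13}, specializing to the case in which $n$ is even. First I would match up the hypotheses: the four alternatives (i-1)--(i-4) stated here are precisely the alternatives (i-2)--(i-5) appearing in Corollary \ref{S_13}, and the four alternatives (ii-1)--(ii-4) here are precisely (ii-2)--(ii-5) there. Thus the present assumptions form a subcollection of those of Corollary \ref{S_13}, and nothing new has to be verified at the level of hypotheses; what remains is to explain why they already suffice and why the parity hypothesis upgrades the conclusion from a decomposition to standardness.

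Next I would recall why these alternatives produce the two structural conditions of Theorem \ref{S_4}. By Lemma \ref{S_12}, each of (i-1)--(i-4) forces $f\varphi(e\mathcal{A}e)f\subseteq f\mathcal{Z}(\mathcal{A})f$ and $e\varphi(f\mathcal{A}f)e\subseteq e\mathcal{Z}(\mathcal{A})e$, which is condition (i) of Theorem \ref{S_4}. Likewise, each of (ii-1)--(ii-4) forces $e\varphi(tm)e+f\varphi(mt)f\in\mathcal{Z}(\mathcal{A})$ for all $m\in e\mathcal{A}f$ and $t\in f\mathcal{A}e$, which is condition (ii) of Theorem \ref{S_4}: the no-central-ideal alternative is handled by Lemma \ref{S_9}, the two centre-description alternatives by Lemma \ref{S_11}, and alternative (\ref{S.22-2}) gives $mt=tm=0$, so that $e\varphi(tm)e+f\varphi(mt)f=0$ outright, exactly as in the proof of Corollary \ref{S_8}. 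With both conditions of Theorem \ref{S_4} in hand, that theorem yields the decomposition $\varphi=d+\delta+\gamma$, where $d$ is a derivation, $\delta$ is a singular Jordan derivation, and $\gamma$ maps into $\mathcal{Z}(\mathcal{A})$ and vanishes on all $(n-1)$-th commutators.

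Finally I would eliminate the singular Jordan part using that $n$ is even. Since $n$ is even we have $(-1)^{n-1}=-1$, so the identities (\ref{S.9-2}) and (\ref{S.10-2}) read $f\varphi(m)e=-f\varphi(m)e$ and $e\varphi(t)f=-e\varphi(t)f$, that is $2f\varphi(m)e=0$ and $2e\varphi(t)f=0$. Because $\mathcal{A}$ is $2$-torsion free, $f\varphi(m)e=e\varphi(t)f=0$ for all $m\in e\mathcal{A}f$ and $t\in f\mathcal{A}e$, and then the definition (\ref{S.12}) forces $\delta=0$. Hence $\varphi=d+\gamma$ is standard. This last step is exactly the argument used in Corollary \ref{S_6}, and it is the only place where the parity hypothesis enters. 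I do not expect a genuine obstacle here: all the substantive work has already been carried out in Theorem \ref{S_4} and in Lemmas \ref{S_9}, \ref{S_11}, and \ref{S_12}, so the main care required is the bookkeeping of matching the labelled alternatives of Corollary \ref{S_13} with those stated here and confirming that the even-$n$ reduction of $\delta$ to zero goes through verbatim.
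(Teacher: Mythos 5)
Your proposal is correct and follows essentially the same route as the paper: Corollary \ref{S_14} is obtained by combining Corollary \ref{S_13} (whose hypotheses (i-2)--(i-5) and (ii-2)--(ii-5) match your (i-1)--(i-4) and (ii-1)--(ii-4), resting on Lemmas \ref{S_9}, \ref{S_11}, \ref{S_12} and Theorem \ref{S_4}) with the even-$n$ vanishing of $\delta$ via (\ref{S.9-2}), (\ref{S.10-2}) and $2$-torsion freeness, exactly as in Corollary \ref{S_6}. No gaps.
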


\begin{corollary}\label{S_15}
%Let $\mathcal{A}$ and $e$ be as in Corollary \ref{S_14}. Suppose that $\mathcal{A}$ is $2$- and $(n-1)$-torsion free.
Suppose that $\mathcal{A}$ is $2$- and $(n-1)$-torsion free, and that $\mathcal{A}$ satisfies the property (\ref{1.6}). If one of the following statements holds:
\begin{description}
\item[(i-1)] $e\mathcal{A}e=\mathcal{S}(e\mathcal{A}e)$ and $f\mathcal{A}f=\mathcal{S}(f\mathcal{A}f)$,
\item[(i-2)] $e\mathcal{A}e=\mathcal{S}(e\mathcal{A}e)$ and $\mathcal{Z}(e\mathcal{A}e)=e\mathcal{Z}(\mathcal{A})e$,
\item[(i-3)] $f\mathcal{A}f=\mathcal{S}(f\mathcal{A}f)$ and $\mathcal{Z}(f\mathcal{A}f)=f\mathcal{Z}(\mathcal{A})f$,
\item[(i-4)] $e\mathcal{A}e$ or $f\mathcal{A}f$ satisfies (\ref{S.34}) when $n\geq 3$, $\mathcal{Z}(e\mathcal{A}e)=e\mathcal{Z}(\mathcal{A})e$ and $\mathcal{Z}(f\mathcal{A}f)=f\mathcal{Z}(\mathcal{A})f$,
\end{description}
and if one of the following statements also holds:
\begin{description}
\item[(ii-1)] $e\mathcal{A}e$ or $f\mathcal{A}f$ has no nonzero central ideal,
\item[(ii-2)] $\mathcal{A}$ satisfies (\ref{S.22-2}),
\end{description}
and if
\begin{description}
\item[(iii)] for each $x$ in $\mathcal{A}$, we have $exf\cdot f\mathcal{A}e=\{0\}=f\mathcal{A}e\cdot exf$ implies $exf=0$ and $e\mathcal{A}f\cdot fxe=\{0\}=fxe\cdot e\mathcal{A}f$ implies $fxe=0$,
\end{description}
then every Lie $n$-derivation on $\mathcal{A}$ is standard.
\end{corollary}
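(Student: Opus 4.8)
The plan is to first reduce to the decomposition furnished by Corollary \ref{S_13} and then to annihilate the singular Jordan part using the new faithfulness hypothesis (iii). Observe that conditions (i-1)--(i-4) here are precisely conditions (i-2)--(i-5) of Corollary \ref{S_13}, and that conditions (ii-1), (ii-2) here match conditions (ii-2), (ii-5) there. Hence Corollary \ref{S_13} applies and yields $\varphi=d+\delta+\gamma$, where $d$ is a derivation, $\gamma$ maps $\mathcal{A}$ into $\mathcal{Z}(\mathcal{A})$ and vanishes on all $(n-1)$-th commutators, and $\delta$ is a singular Jordan derivation which is moreover an \emph{antiderivation} (this antiderivation conclusion is exactly the supplementary assertion of Corollary \ref{S_13} guaranteed under (ii-2) or (ii-5)). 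Thus it remains only to show that $\delta=0$, for then $\varphi=d+\gamma$ is standard.

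Next I would exploit that $\delta$ is an antiderivation. By Lemma \ref{S_7}(i) this is equivalent to the annihilation identities (\ref{S.22-1}), namely
$$\delta(e\mathcal{A}f)\cdot e\mathcal{A}f=e\mathcal{A}f\cdot\delta(e\mathcal{A}f)=\delta(f\mathcal{A}e)\cdot f\mathcal{A}e=f\mathcal{A}e\cdot\delta(f\mathcal{A}e)=\{0\}.$$
Recall from the construction (\ref{S.12}) inside Theorem \ref{S_4} that $\delta$ vanishes on $e\mathcal{A}e$ and $f\mathcal{A}f$, while $\delta(e\mathcal{A}f)\subseteq f\mathcal{A}e$ and $\delta(f\mathcal{A}e)\subseteq e\mathcal{A}f$. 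So all of $\delta$ is carried by the off-diagonal corners, and it suffices to prove $\delta(m)=0$ for each $m\in e\mathcal{A}f$ and $\delta(t)=0$ for each $t\in f\mathcal{A}e$.

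Fix $m\in e\mathcal{A}f$ and set $fxe:=\delta(m)\in f\mathcal{A}e$. From (\ref{S.22-1}) one reads off $e\mathcal{A}f\cdot fxe\subseteq e\mathcal{A}f\cdot\delta(e\mathcal{A}f)=\{0\}$ and $fxe\cdot e\mathcal{A}f\subseteq\delta(e\mathcal{A}f)\cdot e\mathcal{A}f=\{0\}$; the second implication of hypothesis (iii) then forces $fxe=0$, i.e. $\delta(m)=0$. Symmetrically, fixing $t\in f\mathcal{A}e$ and writing $exf:=\delta(t)\in e\mathcal{A}f$, the identities (\ref{S.22-1}) give $exf\cdot f\mathcal{A}e\subseteq\delta(f\mathcal{A}e)\cdot f\mathcal{A}e=\{0\}$ and $f\mathcal{A}e\cdot exf\subseteq f\mathcal{A}e\cdot\delta(f\mathcal{A}e)=\{0\}$, so the first implication of (iii) yields $\delta(t)=0$. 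Hence $\delta=0$ and $\varphi=d+\gamma$ is standard. Equivalently, $\delta=0$ on the off-diagonal corners is exactly condition (iii) of Corollary \ref{S_5}, so one may alternatively close the argument by invoking Corollary \ref{S_5}.

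The bookkeeping of which hypotheses feed Corollary \ref{S_13} is routine, and the derivation of $\varphi=d+\delta+\gamma$ with $\delta$ an antiderivation is entirely delegated to the earlier results. The only genuinely new step is the last one, where the antiderivation relations (\ref{S.22-1}) are precisely engineered to supply the two one-sided annihilation hypotheses demanded by the faithfulness condition (iii). The mild subtlety to watch is keeping the Peirce placements straight: $\delta(e\mathcal{A}f)$ lands in $f\mathcal{A}e$ and $\delta(f\mathcal{A}e)$ lands in $e\mathcal{A}f$, so that in each case the correct of the two implications in (iii) is applied.
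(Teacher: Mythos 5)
Your proposal is correct and follows essentially the same route as the paper: invoke Corollary \ref{S_13} to obtain $\varphi=d+\delta+\gamma$ with $\delta$ a singular Jordan derivation that is also an antiderivation (guaranteed under (ii-1) or (ii-2)), read off the annihilation identities (\ref{S.22-1}) from Lemma \ref{S_7}(i), and then use hypothesis (iii) to force $\delta(e\mathcal{A}f)=\delta(f\mathcal{A}e)=\{0\}$, hence $\delta=0$. Your careful tracking of the Peirce placements ($\delta(e\mathcal{A}f)\subseteq f\mathcal{A}e$ and $\delta(f\mathcal{A}e)\subseteq e\mathcal{A}f$) makes explicit a detail the paper leaves implicit, but the argument is the same.
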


\begin{proof}
According to Corollary \ref{S_13}, we have that if $\mathcal{A}$ satisfies one of (i-1), (i-2), (i-3) and (i-4) and satisfies (ii-1) or (ii-2), then arbitrary Lie $n$-derivation $\varphi$ on $\mathcal{A}$ is of the form $\varphi=d+\delta+\gamma$, where $d$ is a derivation on $\mathcal{A}$, $\delta$ is a singular Jordan derivation and antiderivation on $\mathcal{A}$, and $\gamma$ is a linear mapping from $\mathcal{A}$ into $\mathcal{Z}(\mathcal{A})$ vanishing on all $(n-1)-$th commutators of $\mathcal{A}$.
By Remark \ref{S_7}, we know that $\delta$ satisfies (\ref{S.22-1}):
$$\delta(e\mathcal{A}f)\cdot e\mathcal{A}f=e\mathcal{A}f\cdot\delta(e\mathcal{A}f)=\delta(f\mathcal{A}e)\cdot f\mathcal{A}e=f\mathcal{A}e\cdot\delta(f\mathcal{A}e)=\{0\}.$$
Since (iii), we conclude that $\delta(e\mathcal{A}f)=\delta(f\mathcal{A}e)=\{0\}$. That is, $\delta=0$.
\end{proof}

We would mention that if both conditions (ii-2) and (iii) hold, it actually means that $\mathcal{A}$ satisfies $e\mathcal{A}f=f\mathcal{A}e=\{0\}$. According to the discussion before Lemma \ref{S_12}, we obtain the following corollary.

\begin{corollary}\label{S_16}
%Let $\mathcal{A}$ and $e$ be as in Corollary \ref{S_14}. Suppose that $\mathcal{A}=\mathcal{S}(\mathcal{A})$ and that $\mathcal{A}$ is $2$- and $(n-1)$-torsion free.
Suppose that $\mathcal{A}$ is a $2$- and $(n-1)$-torsion free algebra satisfying the property (\ref{1.6}), and that $\mathcal{A}=\mathcal{S}(\mathcal{A})$. If one of the following statements holds:
\begin{description}
\item[(i)] $n$ is even, and $\mathcal{A}$ satisfies (\ref{S.22-2}),
\item[(ii)] $e\mathcal{A}f=f\mathcal{A}e=\{0\}$,
\end{description}
then every Lie $n$-derivation on $\mathcal{A}$ is standard.
\end{corollary}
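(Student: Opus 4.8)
The plan is to reduce both cases to the already-established Corollaries \ref{S_14} and \ref{S_15}, exploiting the structural identity for $\mathcal{S}(\mathcal{A})$ recorded in the discussion preceding Lemma \ref{S_12}. The key tool to deploy is the observation that whenever $\mathcal{A}$ satisfies the property (\ref{S.22-2}), i.e. $e\mathcal{A}f\cdot f\mathcal{A}e=f\mathcal{A}e\cdot e\mathcal{A}f=\{0\}$, the hypothesis $\mathcal{A}=\mathcal{S}(\mathcal{A})$ is equivalent to the two conditions $e\mathcal{A}e=\mathcal{S}(e\mathcal{A}e)$ and $f\mathcal{A}f=\mathcal{S}(f\mathcal{A}f)$. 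This follows from specializing the decomposition $\mathcal{S}(\mathcal{A})=(\mathcal{S}(e\mathcal{A}e)+e\mathcal{A}f\cdot f\mathcal{A}e)+e\mathcal{A}f+f\mathcal{A}e+(\mathcal{S}(f\mathcal{A}f)+f\mathcal{A}e\cdot e\mathcal{A}f)$ to the situation where the cross terms vanish.

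For case (i), where $n$ is even and $\mathcal{A}$ satisfies (\ref{S.22-2}), I would proceed as follows. Combining $\mathcal{A}=\mathcal{S}(\mathcal{A})$ with (\ref{S.22-2}), the equivalence above yields $e\mathcal{A}e=\mathcal{S}(e\mathcal{A}e)$ and $f\mathcal{A}f=\mathcal{S}(f\mathcal{A}f)$, which is precisely hypothesis (i-1) of Corollary \ref{S_14}. Moreover (\ref{S.22-2}) is hypothesis (ii-4) of that corollary. Since $n$ is even, and $\mathcal{A}$ is $2$- and $(n-1)$-torsion free and satisfies the property (\ref{1.6}), every hypothesis of Corollary \ref{S_14} is met, whence every Lie $n$-derivation on $\mathcal{A}$ is standard.

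For case (ii), where $e\mathcal{A}f=f\mathcal{A}e=\{0\}$, I first note that (\ref{S.22-2}) then holds trivially, so once more $e\mathcal{A}e=\mathcal{S}(e\mathcal{A}e)$ and $f\mathcal{A}f=\mathcal{S}(f\mathcal{A}f)$, supplying hypothesis (i-1) of Corollary \ref{S_15}, while (\ref{S.22-2}) itself supplies hypothesis (ii-2). Finally, since $e\mathcal{A}f=\{0\}$ forces $exf=0$ and $f\mathcal{A}e=\{0\}$ forces $fxe=0$ for every $x\in\mathcal{A}$, hypothesis (iii) of Corollary \ref{S_15} is vacuously satisfied. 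Invoking Corollary \ref{S_15} then gives that every Lie $n$-derivation on $\mathcal{A}$ is standard.

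I expect no genuinely new computation to be required; the corollary is essentially a bookkeeping consequence of the earlier results. The one step most worth verifying carefully is the propagation of the $\mathcal{S}$-condition under the vanishing of the cross products $e\mathcal{A}f\cdot f\mathcal{A}e$ and $f\mathcal{A}e\cdot e\mathcal{A}f$, since it is exactly this descent from $\mathcal{A}=\mathcal{S}(\mathcal{A})$ to $e\mathcal{A}e=\mathcal{S}(e\mathcal{A}e)$ and $f\mathcal{A}f=\mathcal{S}(f\mathcal{A}f)$ that lets the appropriate case of Corollary \ref{S_14} or \ref{S_15} be triggered.
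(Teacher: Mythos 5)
Your proposal is correct and follows essentially the same route as the paper: the paper's (implicit) proof likewise uses the decomposition of $\mathcal{S}(\mathcal{A})$ stated before Lemma \ref{S_12} to descend from $\mathcal{A}=\mathcal{S}(\mathcal{A})$ to $e\mathcal{A}e=\mathcal{S}(e\mathcal{A}e)$ and $f\mathcal{A}f=\mathcal{S}(f\mathcal{A}f)$ under (\ref{S.22-2}), and then invokes Corollary \ref{S_14} for case (i) and Corollary \ref{S_15} for case (ii).
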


\section{\bf Applications}\label{A}

\subsection{\bf Generalized matrix algebras}
Let $\mathcal{G}=(A, M, N, B)$ be a generalized matrix algebra, where $A$ and $B$ are two unital algebras, and $_AM_B$ and $_BN_A$ are two bimodules. Suppose that $M$ is faithful, which means that $aM=0$ implies $a=0$ for each $a\in A$ and that $Mb=0$ implies $b=0$ for each $b\in B$. It obviously follows that $\mathcal{G}$ satisfies (\ref{1.6}). By Corollaries \ref{S_14} and \ref{S_15}, we obtain the following corollaries which are also partially proved by \cite[Theorem 1]{Wang-Wang}.

\begin{corollary}\label{A_1}
Let $\mathcal{G}=(A, M, N, B)$ be a $2$- and $(n-1)$-torsion free generalized matrix algebra, where $A$ and $B$ are two unital algebras, $M$ is a faithful $(A,B)$-bimodule, and $N$ is a $(B,A)$-bimodule. Suppose that $n$ is even. If one of the following statements holds:
\begin{description}
  \item[(i-1)] $A=\mathcal{S}(A)$ and $B=\mathcal{S}(B)$, %$\mathrm{(i}$-$\mathrm{1)}$
  \item[(i-2)] $A=\mathcal{S}(A)$ and $\mathcal{Z}(A)=e\mathcal{Z}(\mathcal{G})e$,
  \item[(i-3)] $B=\mathcal{S}(B)$ and $\mathcal{Z}(B)=f\mathcal{Z}(\mathcal{G})f$,
  \item[(i-4)] $A$ or $B$ satisfies (\ref{S.34}) when $n\geq 3$, $\mathcal{Z}(A)=e\mathcal{Z}(\mathcal{G})e$ and $\mathcal{Z}(B)=f\mathcal{Z}(\mathcal{G})f$,
\end{description}
and if one of the following statements also holds:
\begin{description}
\item[(ii-1)] $A$ or $B$ has no nonzero central ideal,
\item[(ii-2)] $\mathcal{Z}(\mathcal{G})=\{\begin{array}{l|l}
                            a+b & a\in e\mathcal{Z}(\mathcal{G})e,~b\in f\mathcal{Z}(\mathcal{G})f,~
                                    am_0=m_0b
                            \end{array}\}$ for some $m_0\in M$,
\item[(ii-3)] $\mathcal{Z}(\mathcal{G})=\{\begin{array}{l|l}
                            a+b & a\in e\mathcal{Z}(\mathcal{G})e,~b\in f\mathcal{Z}(\mathcal{G})f,~
                                    t_0a=bt_0
                            \end{array}\}$ for some $t_0\in N$,
\item[(ii-4)] $\mathcal{G}$ satisfies (\ref{S.22-2}),
\end{description}
then every Lie $n$-derivation on $\mathcal{G}$ is standard.
\end{corollary}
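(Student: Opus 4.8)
The plan is to realize $\mathcal{G}$ as a unital algebra with a nontrivial idempotent and then to invoke Corollary \ref{S_14} directly. Recall that $\mathcal{G}=(A,M,N,B)$ consists of the formal matrices $\left(\begin{smallmatrix} a & m \\ n & b \end{smallmatrix}\right)$ with $a\in A$, $m\in M$, $n\in N$, $b\in B$, and is unital with identity $\left(\begin{smallmatrix} 1_A & 0 \\ 0 & 1_B \end{smallmatrix}\right)$. Setting $e=\left(\begin{smallmatrix} 1_A & 0 \\ 0 & 0 \end{smallmatrix}\right)$, a nontrivial idempotent, and $f=1-e=\left(\begin{smallmatrix} 0 & 0 \\ 0 & 1_B \end{smallmatrix}\right)$, the Pierce decomposition (\ref{1.5}) of $\mathcal{G}$ reads $e\mathcal{G}e\cong A$, $f\mathcal{G}f\cong B$, $e\mathcal{G}f\cong M$ and $f\mathcal{G}e\cong N$. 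Under this dictionary we have $\mathcal{S}(e\mathcal{G}e)=\mathcal{S}(A)$, $\mathcal{S}(f\mathcal{G}f)=\mathcal{S}(B)$, $\mathcal{Z}(e\mathcal{G}e)=\mathcal{Z}(A)$, $\mathcal{Z}(f\mathcal{G}f)=\mathcal{Z}(B)$, and the elements $m_0\in M$, $t_0\in N$ are exactly the $m_0\in e\mathcal{G}f$, $t_0\in f\mathcal{G}e$ appearing in Corollary \ref{S_14}.

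The single point at which the structure of $\mathcal{G}$ is really used is the verification that $\mathcal{G}$ satisfies the property (\ref{1.6}), and this is where faithfulness of $M$ enters. In the matrix language, the first implication of (\ref{1.6}) becomes: if $a\cdot M=\{0\}=N\cdot a$ for some $a\in A$, then $a=0$. Since $M$ is a faithful $(A,B)$-bimodule, $aM=\{0\}$ already forces $a=0$, so the implication holds (the second half of the hypothesis is not even needed). Symmetrically, $M\cdot b=\{0\}=b\cdot N$ forces $b=0$ because $Mb=\{0\}$ implies $b=0$. Hence $\mathcal{G}$ satisfies (\ref{1.6}), as already observed in the paragraph preceding the statement; note also that the condition (\ref{S.22-2}) for $\mathcal{G}$ is just the vanishing of the two pairings $M\times N\to A$ and $N\times M\to B$, matching the abstract one.

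With (\ref{1.6}) secured, the conclusion is immediate. The standing hypotheses ($n$ even, $\mathcal{G}$ is $2$- and $(n-1)$-torsion free) together with one of (i-1)--(i-4) and one of (ii-1)--(ii-4) are, under the dictionary above, precisely the hypotheses of Corollary \ref{S_14} applied to $\mathcal{A}=\mathcal{G}$; that corollary then yields that every Lie $n$-derivation on $\mathcal{G}$ is standard. I expect no genuine obstacle here: the whole argument is the translation of the generalized-matrix hypotheses into the Pierce-decomposition language plus the one-line deduction of (\ref{1.6}) from faithfulness of $M$. The only place asking for a little care is confirming that the centre conditions (such as $\mathcal{Z}(A)=e\mathcal{Z}(\mathcal{G})e$ and the descriptions (ii-2), (ii-3) of $\mathcal{Z}(\mathcal{G})$) are stated consistently on both sides of the isomorphisms $e\mathcal{G}e\cong A$ and $f\mathcal{G}f\cong B$, but this is purely formal once those isomorphisms are fixed.
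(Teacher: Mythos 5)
Your proposal is correct and follows exactly the paper's route: identify $\mathcal{G}$ with a unital algebra having the nontrivial idempotent $e=\left(\begin{smallmatrix}1_A&0\\0&0\end{smallmatrix}\right)$, deduce property (\ref{1.6}) from faithfulness of $M$ (indeed only half of each hypothesis in (\ref{1.6}) is needed, as you note), and apply Corollary \ref{S_14}. Nothing is missing.
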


\begin{corollary}\label{A_2}
Let $\mathcal{G}=(A, M, N, B)$ be a $2$- and $(n-1)$-torsion free generalized matrix algebra, where $A$ and $B$ are two unital algebras, $M$ is a faithful $(A,B)$-bimodule, and $N$ is a $(B,A)$-bimodule. If one of the following statements holds:
\begin{description}
\item[(i-1)] $A=\mathcal{S}(A)$ and $B=\mathcal{S}(B)$,
\item[(i-2)] $A=\mathcal{S}(A)$ and $\mathcal{Z}(A)=e\mathcal{Z}(\mathcal{G})e$,
\item[(i-3)] $B=\mathcal{S}(B)$ and $\mathcal{Z}(B)=f\mathcal{Z}(\mathcal{G})f$,
\item[(i-4)] $A$ or $B$ satisfies (\ref{S.34}) when $n\geq 3$, $\mathcal{Z}(A)=e\mathcal{Z}(\mathcal{G})e$ and $\mathcal{Z}(B)=f\mathcal{Z}(\mathcal{G})f$,
\end{description}
and if one of the following statements also holds:
\begin{description}
\item[(ii-1)] $A$ or $B$ has no nonzero central ideal,
\item[(ii-2)] $\mathcal{G}$ satisfies (\ref{S.22-2}),
\end{description}
and if
\begin{description}
\item[(iii)]  for each $m$ in $M$ and $t$ in $N$, it follows that $m\cdot N=\{0\}=N\cdot m$ implies $m=0$, and that $M\cdot t=\{0\}=t\cdot M$ implies $t=0$,
\end{description}
then every Lie $n$-derivation on $\mathcal{G}$ is standard.
\end{corollary}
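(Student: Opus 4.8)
The plan is to realize $\mathcal{G}$ as a unital algebra with a nontrivial idempotent and then invoke Corollary \ref{S_15} directly. First I would let $e$ be the idempotent of $\mathcal{G}$ corresponding to the identity $1_A$ of $A$, and set $f=1-e$. Under the Peirce decomposition (\ref{1.5}) this yields the identifications $e\mathcal{G}e=A$, $e\mathcal{G}f=M$, $f\mathcal{G}e=N$ and $f\mathcal{G}f=B$, so that the data of Section \ref{S} specializes exactly to the generalized matrix algebra data, with $e\mathcal{Z}(\mathcal{G})e$ and $f\mathcal{Z}(\mathcal{G})f$ playing their evident roles.

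The first genuine step is to verify that $\mathcal{G}$ satisfies property (\ref{1.6}). If $exe=a\in A$ satisfies $exe\cdot e\mathcal{G}f=\{0\}$, that is $aM=\{0\}$, then faithfulness of $M$ forces $a=0$, giving the first implication of (\ref{1.6}); symmetrically, if $fxf=b\in B$ satisfies $e\mathcal{G}f\cdot fxf=\{0\}$, that is $Mb=\{0\}$, faithfulness of $M$ forces $b=0$, giving the second. Thus $\mathcal{G}$ is a unital algebra with nontrivial idempotent $e$ satisfying (\ref{1.6}), and by hypothesis it is $2$- and $(n-1)$-torsion free.

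It then remains to match each hypothesis of the present corollary with the corresponding hypothesis of Corollary \ref{S_15}. Conditions (i-1) through (i-4) and (ii-1), (ii-2) are term-by-term translations under the dictionary $A=e\mathcal{G}e$, $B=f\mathcal{G}f$, $M=e\mathcal{G}f$, $N=f\mathcal{G}e$. For condition (iii), I would observe that $exf$ ranges over all of $M$ and $fxe$ over all of $N$; writing $m=exf$ and $t=fxe$, the implications $exf\cdot f\mathcal{G}e=\{0\}=f\mathcal{G}e\cdot exf\Rightarrow exf=0$ and $e\mathcal{G}f\cdot fxe=\{0\}=fxe\cdot e\mathcal{G}f\Rightarrow fxe=0$ of Corollary \ref{S_15}(iii) become verbatim the implications $m\cdot N=\{0\}=N\cdot m\Rightarrow m=0$ and $M\cdot t=\{0\}=t\cdot M\Rightarrow t=0$ stated here. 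Hence all hypotheses of Corollary \ref{S_15} are met, and that corollary immediately yields that every Lie $n$-derivation on $\mathcal{G}$ is standard. Since the argument is purely a translation-and-citation, I anticipate no real obstacle; the only point needing care is confirming that faithfulness of the single bimodule $M$ already forces both halves of (\ref{1.6}) from the one-sided vanishing conditions $aM=\{0\}$ and $Mb=\{0\}$, without recourse to any hypothesis on $N$.
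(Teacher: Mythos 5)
Your proposal is correct and follows the paper's own route: the paper likewise notes that faithfulness of $M$ alone gives property (\ref{1.6}) for $\mathcal{G}$ and then deduces Corollary \ref{A_2} directly from Corollary \ref{S_15} under the identification $A=e\mathcal{G}e$, $B=f\mathcal{G}f$, $M=e\mathcal{G}f$, $N=f\mathcal{G}e$. Your translation of condition (iii) and your remark that no hypothesis on $N$ is needed for (\ref{1.6}) are both accurate.
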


Let's recall that a generalized matrix algebra $\mathcal{G}=(A, M, N, B)$ is called a \emph{trivial generalized matrix algebra} if $MN=NM=\{0\}$. Thus, $\mathcal{G}$ satisfies (\ref{S.22-2}). In fact, a trivial generalized matrix algebra satisfying (iii) is a generalized matrix algebra satisfying $M=N=\{0\}$. Then we obtain the following corollary.

\begin{corollary}\label{A_1_2_trivial}
Let $\mathcal{G}=(A, M, N, B)$ be a $2$- and $(n-1)$-torsion free trivial generalized matrix algebra, where $A$ and $B$ are two unital algebras, $M$ is a faithful $(A,B)$-bimodule, and $N$ is a $(B,A)$-bimodule. If one of the following statements holds:
\begin{description}
\item[(i)] $A=\mathcal{S}(A)$ and $B=\mathcal{S}(B)$,
\item[(ii)] $\mathcal{G}=\mathcal{S}(\mathcal{G})$,
\item[(iii)] $A=\mathcal{S}(A)$ and $\mathcal{Z}(A)=e\mathcal{Z}(\mathcal{G})e$,
\item[(iv)] $B=\mathcal{S}(B)$ and $\mathcal{Z}(B)=f\mathcal{Z}(\mathcal{G})f$,
\item[(v)] $A$ or $B$ satisfies (\ref{S.34}) when $n\geq 3$, $\mathcal{Z}(A)=e\mathcal{Z}(\mathcal{G})e$ and $\mathcal{Z}(B)=f\mathcal{Z}(\mathcal{G})f$,
\end{description}
and if $n$ is even or $M=N=\{0\}$, then every Lie $n$-derivation on $\mathcal{G}$ is standard.
\end{corollary}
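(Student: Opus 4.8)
The plan is to reduce both cases to the standardness criteria already established in Corollaries \ref{S_14} and \ref{S_15}, using the structural description of $\mathcal{S}(\mathcal{A})$ recorded in the discussion preceding Lemma \ref{S_12}. The first observation is that in each case the property (\ref{S.22-2}), namely $e\mathcal{A}f\cdot f\mathcal{A}e=f\mathcal{A}e\cdot e\mathcal{A}f=\{0\}$, is in force: under hypothesis (i) it is assumed outright, while under hypothesis (ii) it holds trivially because the off-diagonal corners $e\mathcal{A}f$ and $f\mathcal{A}e$ vanish. With (\ref{S.22-2}) available, the cross terms $e\mathcal{A}f\cdot f\mathcal{A}e$ and $f\mathcal{A}e\cdot e\mathcal{A}f$ in the decomposition $\mathcal{S}(\mathcal{A})=(\mathcal{S}(e\mathcal{A}e)+e\mathcal{A}f\cdot f\mathcal{A}e)+e\mathcal{A}f+f\mathcal{A}e+(\mathcal{S}(f\mathcal{A}f)+f\mathcal{A}e\cdot e\mathcal{A}f)$ collapse to zero, so the hypothesis $\mathcal{A}=\mathcal{S}(\mathcal{A})$ forces $e\mathcal{A}e=\mathcal{S}(e\mathcal{A}e)$ and $f\mathcal{A}f=\mathcal{S}(f\mathcal{A}f)$. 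This is precisely condition (i-1) in both target corollaries.

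For case (i), I would then simply invoke Corollary \ref{S_14}: the derived identities $e\mathcal{A}e=\mathcal{S}(e\mathcal{A}e)$ and $f\mathcal{A}f=\mathcal{S}(f\mathcal{A}f)$ supply its hypothesis (i-1), while the standing assumption (\ref{S.22-2}) supplies its hypothesis (ii-4). Since $n$ is even, Corollary \ref{S_14} immediately yields that every Lie $n$-derivation on $\mathcal{A}$ is standard.

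For case (ii), I would appeal to Corollary \ref{S_15} instead. Condition (i-1) is again in hand, and (\ref{S.22-2}) furnishes its condition (ii-2). It remains to check the non-degeneracy hypothesis (iii); but here it holds vacuously, since $e\mathcal{A}f=f\mathcal{A}e=\{0\}$ already forces $exf=0$ and $fxe=0$ for every $x\in\mathcal{A}$, independently of any annihilation condition, so both implications in (iii) are satisfied automatically. Corollary \ref{S_15} then gives standardness.

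The only genuinely load-bearing step is the descent $\mathcal{A}=\mathcal{S}(\mathcal{A})\Rightarrow e\mathcal{A}e=\mathcal{S}(e\mathcal{A}e),\ f\mathcal{A}f=\mathcal{S}(f\mathcal{A}f)$, and this is exactly where (\ref{S.22-2}) is indispensable: without it the products $e\mathcal{A}f\cdot f\mathcal{A}e$ and $f\mathcal{A}e\cdot e\mathcal{A}f$ may inject extra elements into the diagonal corners, so that $\mathcal{S}(e\mathcal{A}e)$ could remain a proper subalgebra of $e\mathcal{A}e$ even when $\mathcal{S}(\mathcal{A})=\mathcal{A}$. Everything else is a matter of matching the derived conditions to the pre-proved corollaries, so I expect no serious calculational obstacle once the descent is in place.
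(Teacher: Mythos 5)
Your argument matches the paper's (implicit) proof: the triviality hypothesis $MN=NM=\{0\}$ gives (\ref{S.22-2}) outright, the Peirce decomposition of $\mathcal{S}(\mathcal{G})$ recorded before Lemma \ref{S_12} then collapses the statement's condition (ii) to condition (i), and the two terminal cases ($n$ even, resp.\ $M=N=\{0\}$) are discharged by Corollaries \ref{S_14} and \ref{S_15}, the latter's hypothesis (iii) holding vacuously since the off-diagonal corners vanish. Two small points of presentation: your labels ``case (i)''/``case (ii)'' refer to the dichotomy ``$n$ even or $M=N=\{0\}$'' rather than to the statement's enumerated list, which should be disambiguated, and you should add the one-line observation that the remaining alternatives (iii)--(v) of the statement are verbatim hypotheses (i-2)--(i-4) of the cited corollaries under the identifications $A=e\mathcal{G}e$, $B=f\mathcal{G}f$, so only alternative (ii) actually requires the descent you describe.
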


Let $\mathcal{A}=M_s(A)$ be a full matrix algebra, where $A$ is a unital algebra with center $\mathcal{Z}(A)$ and $s\geq 2$ is an arbitrary positive integer. According to \cite[Corollary 4.4]{Benkovic-Sirovnik}, we know that every Jordan derivation of $M_s(A)$ is a derivation. If $s\geq 3$, $M_s(A)$ can be represented as a generalized matrix algebra of the form $$\left(\begin{array}{ll}
 A & M_{1\times(s-1)}(A)\\
  M_{(s-1)\times 1}(A) & M_{s-1}(A)
\end{array}\right)$$
where $M_{1\times(s-1)}(A)$ is faithful. In view of \cite[Lemma 1]{Du-Wang-lieder} and \cite[Example 5.6]{Benkovic-Eremita}, we obtain that (i-5) and (ii-2) in Corollary \ref{S_13} hold. Thus, every Lie $n$-derivation on $M_s(A)$ is standard for each $s\geq 3$. If $s=2$ and $A$ is commutative, we can assert that (i-5) and (ii-3) in Corollary \ref{S_13} hold. Actually, take arbitrary elements $a$ in $e\mathcal{Z}(M_2(A))e$ and $b$ in $f\mathcal{Z}(M_2(A))f$, which follows that $am=m\tau(a)$ and $bt=t\tau^{-1}(b)$ for each $m$ in $e\mathcal{Z}(M_2(A))f$ and $t$ in $f\mathcal{Z}(M_2(A))e$. Assume that there exists nonzero element $m_0$ in $eM_2(A)f$ satisfying $am_0=m_0b$. We can obtain that $b=\tau(a)$. Thus, $am=mb$ and $bt=ta$ for each $m$ in $e\mathcal{Z}(M_2(A))f$ and $t$ in $f\mathcal{Z}(M_2(A))e$ if $am_0=m_0b$, i.e., (ii-3) in Corollary \ref{S_13} holds. If $s=2$ and $A$ is noncommutative prime, it's not difficult to prove that (i-5) and (ii-2) in Corollary \ref{S_13} hold. Therefore, every Lie $n$-derivation on $M_2(A)$ is standard. These conclusions are also partially proved by \cite[Theorem 2.1]{Xiao-Wei-Lie}, \cite[Corollaries 5.5 and 5.6]{Benkovic} and \cite[Corollaries 2 and 3]{Wang-Wang}.

\begin{corollary}\label{A_3}
Let $\mathcal{A}=M_s(A)$ be a $2$- and $(n-1)$-torsion free full matrix algebra, where $A$ is a unital algebra with center $\mathcal{Z}(A)$ and $s\geq 3$. Then every Lie $n$-derivation on $M_s(A)$ is standard.
\end{corollary}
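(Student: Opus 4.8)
The plan is to realize $M_s(A)$ as an algebra with a nontrivial idempotent of the type treated in Section~\ref{S}, apply Corollary~\ref{S_13} to peel off a derivation and a singular Jordan derivation, and then kill the singular part using the fact that every Jordan derivation of a full matrix algebra is a derivation. First I would fix $e$ to be the matrix carrying the unit of $A$ in the $(1,1)$-entry and zeros elsewhere, with $f=1-e$, so that the Peirce components are $e\mathcal{A}e\cong A$, $f\mathcal{A}f\cong M_{s-1}(A)$, $e\mathcal{A}f\cong M_{1\times(s-1)}(A)$ and $f\mathcal{A}e\cong M_{(s-1)\times 1}(A)$. Since the off-diagonal bimodule $M_{1\times(s-1)}(A)$ is faithful, $M_s(A)$ satisfies the property (\ref{1.6}), as noted at the opening of this subsection. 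The hypothesis $s\geq 3$ is what makes the argument work, because it forces the corner $f\mathcal{A}f=M_{s-1}(A)$ to be itself a full matrix algebra of size at least $2$.

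Next I would verify conditions (i-5) and (ii-2) of Corollary~\ref{S_13}. For (ii-2) it suffices that $f\mathcal{A}f=M_{s-1}(A)$ has no nonzero central ideal: its centre consists of central scalar matrices, and multiplying such a matrix by an off-diagonal matrix unit leaves the centre, so any ideal contained in the centre is zero, which is the content of \cite[Example 5.6]{Benkovic-Eremita}. For (i-5) I would compute $\mathcal{Z}(M_s(A))=\mathcal{Z}(A)I_s$, whence $e\mathcal{Z}(\mathcal{A})e=\mathcal{Z}(A)=\mathcal{Z}(e\mathcal{A}e)$ and $f\mathcal{Z}(\mathcal{A})f=\mathcal{Z}(A)I_{s-1}=\mathcal{Z}(M_{s-1}(A))=\mathcal{Z}(f\mathcal{A}f)$, and I would invoke \cite[Lemma 1]{Du-Wang-lieder} to see that $M_{s-1}(A)$ satisfies the no-nonzero-central-inner-derivation condition (\ref{S.34}), matrix algebras being among the examples listed after (\ref{S.34}). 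With (i-5) and (ii-2) available, Corollary~\ref{S_13} furnishes the decomposition $\varphi=d+\delta+\gamma$, where $d$ is a derivation, $\delta$ is a singular Jordan derivation (in fact an antiderivation, by the additional clause under (ii-2)), and $\gamma$ maps into $\mathcal{Z}(\mathcal{A})$ and vanishes on all $(n-1)$-th commutators.

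Finally I would eliminate $\delta$. Since Corollary~\ref{S_13} does not assume $n$ even, standardness cannot be read off from its last clause; instead I would appeal to \cite[Corollary 4.4]{Benkovic-Sirovnik}, by which every Jordan derivation of $M_s(A)$ is a derivation. Hence the singular Jordan derivation $\delta$ is a derivation, and I claim any map that is simultaneously singular and a derivation is zero. Indeed, by (\ref{S.1}) a singular Jordan derivation annihilates $e\mathcal{A}e$ and $f\mathcal{A}f$, so $\delta(e)=\delta(f)=0$; then for $m\in e\mathcal{A}f$ the derivation identity gives $\delta(m)=\delta(emf)=e\delta(m)f\in e\mathcal{A}f$, while singularity gives $\delta(m)\in f\mathcal{A}e$, and since the Peirce summands intersect trivially we get $\delta(m)=0$, and similarly $\delta(f\mathcal{A}e)=\{0\}$. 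Thus $\delta=0$ and $\varphi=d+\gamma$ is standard. The main obstacle is conceptual rather than computational: one must recognize that the device for killing $\delta$ is the structural fact that Jordan derivations of full matrix algebras are derivations, not the parity hypothesis exploited elsewhere; the centre identities and the verification of (\ref{S.34}) for $M_{s-1}(A)$ are then routine, and all of this hinges on $s\geq 3$ so that $M_{s-1}(A)$ is genuinely a matrix algebra of size at least $2$.
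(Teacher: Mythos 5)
Your proposal is correct and follows essentially the same route as the paper: realize $M_s(A)$ as the generalized matrix algebra with corners $A$ and $M_{s-1}(A)$, verify (i-5) and (ii-2) of Corollary~\ref{S_13} via \cite[Lemma 1]{Du-Wang-lieder} and \cite[Example 5.6]{Benkovic-Eremita}, and then eliminate the singular part $\delta$ using \cite[Corollary 4.4]{Benkovic-Sirovnik} (every Jordan derivation of $M_s(A)$ is a derivation). Your explicit argument that a singular Jordan derivation which is also a derivation must vanish is a correct spelling-out of the step the paper leaves implicit.
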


\begin{corollary}\label{A_4}
Let $\mathcal{A}=M_2(A)$ be a $2$- and $(n-1)$-torsion free full matrix algebra, where $A$ is a unital algebra with center $\mathcal{Z}(A)$. Suppose that $A$ is a commutative algebra or a noncommutative prime algebra. Then every Lie $n$-derivation on $M_2(A)$ is standard.
\end{corollary}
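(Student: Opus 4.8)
The plan is to realize $\mathcal{A}=M_2(A)$ as a unital algebra with nontrivial idempotents $e=E_{11}$ and $f=E_{22}$, to verify the hypotheses of Corollary \ref{S_13} so as to decompose any Lie $n$-derivation $\varphi$ as $d+\delta+\gamma$, and then to annihilate the singular Jordan part $\delta$ using the fact that Jordan derivations of $M_2(A)$ are derivations. With this choice the Pierce components are $e\mathcal{A}e=AE_{11}$, $e\mathcal{A}f=AE_{12}$, $f\mathcal{A}e=AE_{21}$ and $f\mathcal{A}f=AE_{22}$, each a copy of $A$; since $A$ is unital the bimodule $AE_{12}$ is faithful, so $M_2(A)$ satisfies the property (\ref{1.6}) and the whole apparatus of Section \ref{S} applies. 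First I would pin down the centre: a matrix commutes with all matrix units and all scalar matrices exactly when it is $xI$ with $x\in\mathcal{Z}(A)$, so $\mathcal{Z}(M_2(A))=\{xI:x\in\mathcal{Z}(A)\}$. Hence $e\mathcal{Z}(\mathcal{A})e=\mathcal{Z}(A)E_{11}=\mathcal{Z}(AE_{11})=\mathcal{Z}(e\mathcal{A}e)$ and likewise $f\mathcal{Z}(\mathcal{A})f=\mathcal{Z}(f\mathcal{A}f)$, which is precisely the centre requirement in condition (i-5) of Corollary \ref{S_13}; the isomorphism $\tau$ of Lemma \ref{S_2} sends $xE_{11}$ to $xE_{22}$.

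Next I would treat the two cases separately, the split being forced by which condition (ii-$\ast$) is available. If $A$ is commutative then $e\mathcal{A}e\cong A$ is commutative, hence satisfies (\ref{S.34}) trivially, so (i-5) holds; and taking $m_0=E_{12}$, the relation $xE_{11}\cdot E_{12}=E_{12}\cdot yE_{22}$ reads $x=y$, whence $\{a+b:a\in e\mathcal{Z}(\mathcal{A})e,\ b\in f\mathcal{Z}(\mathcal{A})f,\ am_0=m_0b\}=\{xI:x\in\mathcal{Z}(A)\}=\mathcal{Z}(M_2(A))$, giving condition (ii-3). Here (ii-2) is unavailable, since a nonzero commutative $A$ is itself a nonzero central ideal of $e\mathcal{A}e$. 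If instead $A$ is noncommutative prime, then $A$ satisfies (\ref{S.34}) as a prime ring, again giving (i-5); and $A$ has no nonzero central ideal, because for a nonzero ideal $I\subseteq\mathcal{Z}(A)$ and $0\neq z\in I$ one finds $zA[a,b]=\{0\}$ for all $a,b$, so primeness forces $[a,b]=0$ and contradicts noncommutativity, yielding condition (ii-2). In either case Corollary \ref{S_13} gives $\varphi=d+\delta+\gamma$ with $d$ a derivation, $\delta$ a singular Jordan derivation and $\gamma$ a central map vanishing on all $(n-1)$-th commutators.

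Finally I would eliminate $\delta$. Because $n$ need not be even, the last clause of Corollary \ref{S_13} is not directly applicable; instead I invoke \cite[Corollary 4.4]{Benkovic-Sirovnik}, by which every Jordan derivation of $M_2(A)$ is a derivation, so $\delta$ is simultaneously a derivation and singular. Then $\delta(e)=0$ (as $e\in e\mathcal{A}e$), and for $m\in e\mathcal{A}f$ one has $\delta(m)=\delta(em)=\delta(e)m+e\delta(m)=e\delta(m)$; but $\delta(m)\in f\mathcal{A}e$, so $e\delta(m)=0$ and $\delta(m)=0$. Symmetrically $\delta(t)=0$ for $t\in f\mathcal{A}e$, while $\delta$ already vanishes on $e\mathcal{A}e$ and $f\mathcal{A}f$ by definition; hence $\delta=0$ and $\varphi=d+\gamma$ is standard. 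I expect the genuine difficulty to lie not in this last step but in the case analysis of the second paragraph: correctly computing $\mathcal{Z}(M_2(A))$, matching it with the Pierce centres, and recognizing that the commutative and prime cases must be fed into the different conditions (ii-3) and (ii-2) respectively. Once those verifications are in place, the collapse of $\delta$ via the Jordan-equals-derivation property is immediate.
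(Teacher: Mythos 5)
Your proof is correct and follows essentially the same route as the paper: verify condition (i-5) of Corollary \ref{S_13} via $\mathcal{Z}(M_2(A))=\mathcal{Z}(A)I$, feed the commutative case into (ii-3) with $m_0=E_{12}$ and the noncommutative prime case into (ii-2), and then kill the singular Jordan part $\delta$ using \cite[Corollary 4.4]{Benkovic-Sirovnik}. You supply somewhat more detail than the paper (the explicit absence of central ideals in the prime case and the explicit collapse $\delta(m)=e\delta(m)f=0$), but the argument is the same.
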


\subsection{\bf Triangular algebras}
Let $\mathcal{T}=(A, M, B)$ be a triangular algebra, where $A$ and $B$ are two unital algebras, and $M$ is a faithful $(A,B)$-bimodule. Since $f\mathcal{T}e=\{0\}$, it's obvious that $\mathcal{T}$ satisfies (\ref{1.6}) and (\ref{S.22-2}), and $f\varphi(e\mathcal{A}f)e=e\varphi(f\mathcal{A}e)f=\{0\}$. According to Corollary \ref{S_13} and the discussion before Lemma \ref{S_12}, we have the following corollary which is also partially proved by \cite[Theorem 5.9]{Benkovic-Eremita}.

\begin{corollary}\label{A_5}
Let $\mathcal{T}=(A,M,B)$ be a $2$- and $(n-1)$-torsion free triangular algebra, where $A$ and $B$ are two unital algebras, and $M$ is a faithful $(A,B)$-bimodule. If one of the following statements holds:
\begin{description}
\item[(i)] $A=\mathcal{S}(A)$ and $B=\mathcal{S}(B)$,
\item[(ii)] $\mathcal{T}=\mathcal{S}(\mathcal{T})$,
\item[(iii)] $A=\mathcal{S}(A)$ and $\mathcal{Z}(A)=e\mathcal{Z}(\mathcal{T})e$,
\item[(iv)] $B=\mathcal{S}(B)$ and $\mathcal{Z}(B)=f\mathcal{Z}(\mathcal{T})f$,
\item[(v)] $A$ or $B$ satisfies (\ref{S.34}) when $n\geq 3$, $\mathcal{Z}(A)=e\mathcal{Z}(\mathcal{T})e$ and $\mathcal{Z}(B)=f\mathcal{Z}(\mathcal{T})f$,
\end{description}
then every Lie $n$-derivation on $\mathcal{T}$ is standard.
\end{corollary}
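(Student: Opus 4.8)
The plan is to exhibit $\mathcal{T}=(A,M,B)$ as a unital algebra with a nontrivial idempotent and then reduce every alternative to Corollary \ref{S_13}. Writing $e$ for the idempotent given by the unit of $A$ and $f=1-e$ for the unit of $B$, one has the Peirce identifications $e\mathcal{T}e\cong A$, $f\mathcal{T}f\cong B$, $e\mathcal{T}f\cong M$ and $f\mathcal{T}e=N=\{0\}$. First I would verify the hypotheses common to all cases of Corollary \ref{S_13}. The property (\ref{1.6}) follows at once from the faithfulness of $M$ together with $f\mathcal{T}e=\{0\}$: if $exe\cdot e\mathcal{T}f=\{0\}$ then $exe\cdot M=\{0\}$, whence $exe=0$, and the condition on $fxf$ is symmetric. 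The property (\ref{S.22-2}) holds trivially, since both $e\mathcal{T}f\cdot f\mathcal{T}e$ and $f\mathcal{T}e\cdot e\mathcal{T}f$ vanish when $N=\{0\}$. In particular the alternative (ii-5) of Corollary \ref{S_13} is automatically satisfied.

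Next I would match each of the five alternatives (i)--(v) to the corresponding alternative (i-2)--(i-5) of Corollary \ref{S_13} through the identifications $A=e\mathcal{T}e$ and $B=f\mathcal{T}f$. Alternatives (i), (iii), (iv) and (v) translate directly into (i-2), (i-3), (i-4) and (i-5), respectively. For alternative (ii) I would invoke the decomposition of $\mathcal{S}(\mathcal{A})$ recorded before Lemma \ref{S_12}: since $e\mathcal{T}f\cdot f\mathcal{T}e=f\mathcal{T}e\cdot e\mathcal{T}f=\{0\}$, one has $\mathcal{T}=\mathcal{S}(\mathcal{T})$ if and only if $A=\mathcal{S}(A)$ and $B=\mathcal{S}(B)$, so (ii) collapses to (i). Thus in every case one of (i-2)--(i-5) holds together with (ii-5), and Corollary \ref{S_13} yields $\varphi=d+\delta+\gamma$ with $d$ a derivation, $\delta$ a singular Jordan derivation and $\gamma$ a linear map into $\mathcal{Z}(\mathcal{T})$ vanishing on all $(n-1)$-th commutators.

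Finally I would eliminate $\delta$. Because $\mathcal{T}$ is triangular, any singular Jordan derivation satisfies $\delta(e\mathcal{T}f)\subseteq f\mathcal{T}e=\{0\}$ and $\delta(f\mathcal{T}e)\subseteq e\mathcal{T}f$ with $f\mathcal{T}e=\{0\}$, while $\delta(e\mathcal{T}e)=\delta(f\mathcal{T}f)=\{0\}$ by definition, so $\delta=0$; this is exactly the remark following the definition of singular Jordan derivation. Equivalently, I could observe that $f\varphi(e\mathcal{T}f)e\subseteq f\mathcal{T}e=\{0\}$ and $e\varphi(f\mathcal{T}e)f=e\varphi(\{0\})f=\{0\}$, so the extra hypothesis of the ``furthermore'' clause of Corollary \ref{S_13} is met. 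Either way $\varphi=d+\gamma$ is standard. I expect no serious obstacle in this argument; the only points requiring care are the correct use of the faithfulness of $M$ when checking (\ref{1.6}) and the reduction of case (ii) via the structure of $\mathcal{S}(\mathcal{T})$, both of which are routine given the earlier results.
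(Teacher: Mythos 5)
Your proposal is correct and follows essentially the same route as the paper: verify that the triangular algebra satisfies (\ref{1.6}) (via faithfulness of $M$) and (\ref{S.22-2}) (since $f\mathcal{T}e=\{0\}$), reduce case (ii) to case (i) using the decomposition of $\mathcal{S}(\mathcal{T})$ recorded before Lemma \ref{S_12}, apply Corollary \ref{S_13}, and kill $\delta$ using $f\mathcal{T}e=\{0\}$. The paper compresses all of this into the remark preceding the corollary; you have merely written out the same steps in full.
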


Similar to the discussion of full matrix algebras, we conclude following corollaries.

\begin{corollary}\label{A_6}
Let $\mathcal{A}=T_s(A)$ be a $2$- and $(n-1)$-torsion free upper triangular matrix algebra, where $A$ is a unital algebra with center $\mathcal{Z}(A)$ and $s\geq 3$. Then every Lie $n$-derivation on $T_s(A)$ is standard.
\end{corollary}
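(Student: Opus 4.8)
The plan is to realize $T_s(A)$ as a triangular algebra and then invoke Corollary \ref{A_5}. Choosing the idempotent $e=E_{11}$ and $f=1-E_{11}$, the Peirce decomposition of $\mathcal{A}=T_s(A)$ reads $e\mathcal{A}e\cong A$, $f\mathcal{A}f\cong T_{s-1}(A)$, $e\mathcal{A}f\cong M_{1\times(s-1)}(A)$ and $f\mathcal{A}e=\{0\}$, so that $\mathcal{A}=(A,\,M_{1\times(s-1)}(A),\,T_{s-1}(A))$ is a triangular algebra. The bimodule $M_{1\times(s-1)}(A)$ is faithful (testing against the standard unit rows forces $a=0$ on the left and $b=0$ on the right), so the standing hypotheses of Corollary \ref{A_5} are in force, and it remains only to check that one of the conditions (i)--(v) of that corollary holds.

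I would verify condition (v), with the corner $B=f\mathcal{A}f=T_{s-1}(A)$ in the distinguished role. Because $s\geq 3$ we have $s-1\geq 2$, so $T_{s-1}(A)$ is itself a triangular algebra; triangular algebras are among the examples satisfying (\ref{S.34}) recorded before Lemma \ref{S_12}, whence $T_{s-1}(A)$ satisfies (\ref{S.34}). This is exactly the hypothesis that (v) requires (and only when $n\geq 3$). The two remaining requirements of (v) are the center-compatibility identities $\mathcal{Z}(A)=e\mathcal{Z}(\mathcal{A})e$ and $\mathcal{Z}(T_{s-1}(A))=f\mathcal{Z}(\mathcal{A})f$.

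To settle these I would compute $\mathcal{Z}(T_s(A))$ directly by the standard argument: a central element must commute with every diagonal idempotent $E_{ii}$, which forces it to be diagonal; commuting with each $E_{i,i+1}$ forces all its diagonal entries to coincide; and commuting with $aE_{ij}$ for every $a\in A$ forces that common value into $\mathcal{Z}(A)$. Hence $\mathcal{Z}(T_s(A))=\mathcal{Z}(A)\cdot I_s$, from which $e\mathcal{Z}(\mathcal{A})e=\mathcal{Z}(A)E_{11}\cong\mathcal{Z}(A)$ and $f\mathcal{Z}(\mathcal{A})f=\mathcal{Z}(A)(I_s-E_{11})\cong\mathcal{Z}(A)\cdot I_{s-1}=\mathcal{Z}(T_{s-1}(A))$ follow at once. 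With condition (v) verified, Corollary \ref{A_5} gives that every Lie $n$-derivation on $T_s(A)$ is standard.

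I do not expect a serious obstacle; the argument is largely bookkeeping built on Corollary \ref{A_5}. The one point that genuinely uses $s\geq 3$ is that the lower corner $T_{s-1}(A)$ must be an honest triangular algebra in order to inherit property (\ref{S.34}). For $s=2$ this corner degenerates to $A$, which need not satisfy (\ref{S.34}), so that case falls outside the present argument and would need a separate treatment, in the same spirit as the prime/commutative discussion used for $M_2(A)$.
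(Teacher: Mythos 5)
Your proposal is correct and follows essentially the same route as the paper: the paper's (terse) justification is precisely to decompose $T_s(A)$ at $e=E_{11}$ as the triangular algebra $(A,\,M_{1\times(s-1)}(A),\,T_{s-1}(A))$ with faithful bimodule and to invoke Corollary \ref{A_5} via condition (v), using that $T_{s-1}(A)$ (a genuine triangular algebra since $s-1\geq 2$) satisfies (\ref{S.34}) and that $\mathcal{Z}(T_s(A))=\mathcal{Z}(A)\cdot I_s$. Your remark on why $s=2$ escapes this argument also matches the paper's subsequent treatment of $T_2(A)$.
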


\begin{corollary}\label{A_7}
Let $\mathcal{A}=T_2(A)$ be a $2$- and $(n-1)$-torsion free upper triangular matrix algebra, where $A$ is a unital algebra with center $\mathcal{Z}(A)$. Suppose that $A$ is a commutative algebra or a noncommutative prime algebra. Then every Lie $n$-derivation on $T_2(A)$ is standard.
\end{corollary}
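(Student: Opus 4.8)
The plan is to realise $T_2(A)$ as the triangular algebra $\mathcal{T}=(A,M,B)$ with $A=B$ and off-diagonal corner $M=A$, equipped with the standard matrix idempotents $e$ (the $(1,1)$-entry) and $f=1-e$, and then to check that the hypotheses of Corollary~\ref{A_5} are met; once one of its conditions (i)--(v) holds, the assertion that every Lie $n$-derivation on $T_2(A)$ is standard follows immediately. Since $A$ is unital, $M=A$ is a faithful $(A,A)$-bimodule (from $aA=0$ we get $a=a\cdot 1=0$, and symmetrically on the right), so $T_2(A)$ is a triangular algebra in the required sense; moreover, as remarked before Corollary~\ref{A_5}, it automatically satisfies the property (\ref{1.6}) as well as (\ref{S.22-2}) and has $f\varphi(e\mathcal{A}f)e=e\varphi(f\mathcal{A}e)f=\{0\}$, because $f\mathcal{T}e=\{0\}$.

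I would verify condition (v) of Corollary~\ref{A_5}, namely that $A$ or $B$ satisfies (\ref{S.34}) (needed only when $n\geq 3$) together with $\mathcal{Z}(A)=e\mathcal{Z}(\mathcal{T})e$ and $\mathcal{Z}(B)=f\mathcal{Z}(\mathcal{T})f$. For the first requirement, the paper already lists commutative algebras and prime algebras among the examples satisfying (\ref{S.34}); hence under either hypothesis on $A$ the algebra $A=B$ satisfies (\ref{S.34}), which covers all $n\geq 2$. The two centre identities are the only computational point: applying Lemma~\ref{S_2}(i) with $f\mathcal{T}e=\{0\}$ and $e\mathcal{T}f=M=A$, a central element $a+b$ must satisfy $am=mb$ for every $m\in A$; evaluating at $m=1$ forces $b=a$, whereupon $am=ma$ for all $m\in A$ forces $a\in\mathcal{Z}(A)$. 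Thus a central element is pinned down by a single $a\in\mathcal{Z}(A)$ placed in both diagonal corners, giving $e\mathcal{Z}(\mathcal{T})e=\mathcal{Z}(A)$ and $f\mathcal{Z}(\mathcal{T})f=\mathcal{Z}(A)=\mathcal{Z}(B)$, exactly as required.

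With (v) established, Corollary~\ref{A_5} yields that every Lie $n$-derivation on $T_2(A)$ is standard, completing the proof. I expect no genuine obstacle here: because $f\mathcal{T}e=\{0\}$, every singular Jordan derivation vanishes, so there is no need for the separate ``Jordan-is-a-derivation'' step used for the full matrix algebra $M_2(A)$. The only point demanding care is the centre computation, where the decisive feature is that the faithful corner $M=A$ contains the identity $1$ of $A$: evaluating $am=mb$ at $m=1$ collapses the two diagonal entries into a single central scalar. The dichotomy between the commutative and the noncommutative prime hypotheses on $A$ enters solely through the cited fact that both classes satisfy (\ref{S.34}).
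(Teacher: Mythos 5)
Your proposal is correct and follows essentially the route the paper intends: the paper disposes of $T_2(A)$ with the one-line remark ``similar to the discussion of full matrix algebras,'' which amounts to applying Corollary~\ref{A_5} after checking its condition (v), exactly as you do. Your explicit verification that $e\mathcal{Z}(\mathcal{T})e=\mathcal{Z}(A)=f\mathcal{Z}(\mathcal{T})f$ (using $1\in M=A$ in Lemma~\ref{S_2}(i)) and that commutative and prime algebras satisfy (\ref{S.34}) fills in precisely the details the paper leaves implicit, with no gaps.
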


\begin{remark}
We would mention that Corollary \ref{A_6} is not true in case that $s=2$. In \cite[Section 7]{Benkovic-Eremita}, the authors construct an example. Let $A$ be a $\mathbb{Z}_2-\mathrm{graded}$ algebra over $\mathcal{R}$, i.e., an algebra of the form $A=A_0+A_1$, where $A_0,A_1\subseteq A$ and multiplication in $A$ is such that $A_0A_0\subseteq A_0$, $A_1A_1\subseteq A_1$, $A_0A_1\subseteq A_1$ and $A_1A_0\subseteq A_1$. Suppose that $\mathcal{Z}(A)=A_0$ and $\varphi$ is a map on $\mathcal{A}=T_2(A)$. Define that $$\varphi\left(\begin{array}{cc}a_0+a_1&m_0+m_1\\&b_0+b_1\end{array}\right)=\left(\begin{array}{cc}b_1&-m_1\\&a_1\end{array}\right)$$
for each $a_0+a_1$, $m_0+m_1$ and $b_0+b_1$ in $A$. Then $\varphi$ is a Lie $n$-derivation and is not standard.
\end{remark}

Let $\mathcal{A}=\mathrm{alg}\mathcal{N}$ be a nest algebra, where $\mathcal{N}$ is a non-trivial nest in a Hilbert space $\mathcal{H}$ and $\mathrm{dim}\mathcal{H}\geq 2$. By \cite{Davidson}, $\mathrm{alg}\mathcal{N}$ can be viewed as a triangular algebra. Let $\mathcal{A}_0=E(\mathrm{alg}\mathcal{N})E$, where $E$ is the orthogonal projection on $\mathcal{N}$. Assume that $d$ is a central derivation of $\mathcal{A}_0$. Since $\mathcal{A}_0$ is also a nest, we can find an orthonormal projection $e$ onto $\mathcal{A}_0$, and view $\mathcal{A}_0$ as a triangular algebra. Since $d(e)\in\mathcal{Z}(\mathcal{A}_0)$ and $d(e)=d(ee)=ed(e)+d(e)e$, we have $d(e)=ed(e)=d(e)e=ed(e)e=0$. Similarly, we have $d(E-e)=0$. For each $m\in e\mathcal{A}_0(E-e)$, since $d(m)\in \mathcal{Z}(\mathcal{A}_0)$ and $d(m)=d(em(E-e))=ed(m)(E-e)$, we have $d(m)=0$. For each $a\in e\mathcal{A}_0e$ and $m\in e\mathcal{A}_0(E-e)$, since $0=d(am)=d(a)m$, we have $d(a)=0$. Similarly, we have $d(b)=0$ for each $b\in (E-e)\mathcal{A}_0(E-e)$. Then $d=0$, i.e., there exists no nonzero central derivation on $\mathcal{A}_0$. Since $\mathcal{Z}(\mathrm{alg}\mathcal{N})=\mathbb{C}I$, it obviously follows that (v) in Corollary \ref{A_5} holds. Thus, we have the following corollary which is also proved in \cite[Corollary 6.4]{Benkovic-Eremita}.

\begin{corollary}\label{A_8}
Let $\mathcal{A}=\mathrm{alg}\mathcal{N}$ be a nest algebra, where $\mathcal{N}$ is a non-trivial nest in a Hilbert space $\mathcal{H}$ and $\mathrm{dim}\mathcal{H}\geq 2$. Then every Lie $n$-derivation on $\mathrm{alg}\mathcal{N}$ is standard.
\end{corollary}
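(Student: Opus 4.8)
The plan is to exhibit $\mathrm{alg}\mathcal{N}$ as a triangular algebra and then to reduce everything to Corollary~\ref{A_5}, so that the real content is the verification of a single one of its hypotheses. First I would fix a nontrivial element of the nest $\mathcal{N}$ and let $E$ denote the corresponding orthogonal projection; by the standard structure theory of nest algebras \cite{Davidson}, $E$ is a nontrivial idempotent of $\mathcal{A}=\mathrm{alg}\mathcal{N}$ for which $(I-E)\mathcal{A}E=\{0\}$. Consequently the Peirce decomposition of $\mathcal{A}$ with respect to $E$ is upper triangular, so $\mathcal{A}$ is a triangular algebra $(A,M,B)$ with $A=E\mathcal{A}E$, $B=(I-E)\mathcal{A}(I-E)$ and faithful bimodule $M=E\mathcal{A}(I-E)$. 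Since $\mathcal{A}$ is an algebra over $\mathbb{C}$ it is torsion free, hence in particular $2$- and $(n-1)$-torsion free, and the ambient hypotheses of Corollary~\ref{A_5} are in force.

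Among the sufficient conditions of Corollary~\ref{A_5} I would target (v). Its two centre equalities $\mathcal{Z}(A)=E\mathcal{Z}(\mathcal{A})E$ and $\mathcal{Z}(B)=(I-E)\mathcal{Z}(\mathcal{A})(I-E)$ follow immediately from the classical fact that the centre of any nest algebra consists of the scalar multiples of the identity: the corners $A$ and $B$ are themselves nest algebras, so all three centres collapse to $\mathbb{C}E$, $\mathbb{C}(I-E)$ and $\mathbb{C}I$ respectively, and the equalities are then trivial. What remains, and what I expect to be the crux, is to show that $A$ satisfies (\ref{S.34}), i.e. that $A$ admits no nonzero central inner derivation.

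I would establish the stronger assertion that $\mathcal{A}_0:=A=E\mathcal{A}E$ admits no nonzero central derivation whatsoever. As $\mathcal{A}_0$ is again a nest algebra, it splits as a triangular algebra with some nontrivial idempotent $e$ and complement $f=E-e$. Let $d$ be a central derivation of $\mathcal{A}_0$. Then $d(e)\in\mathcal{Z}(\mathcal{A}_0)$ is diagonal, while the Leibniz rule $d(e)=d(e^2)=ed(e)+d(e)e$ forces its two diagonal corners to coincide and hence both to vanish, so $d(e)=0$ and likewise $d(f)=0$. For $m\in e\mathcal{A}_0f$ the relations $d(e)=d(f)=0$ give $d(m)=ed(m)f$, an off-diagonal element which, being central, must be zero. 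Finally, for $a\in e\mathcal{A}_0e$ the identity $0=d(am)=d(a)m$ holds for every such $m$, and faithfulness of the off-diagonal bimodule of $\mathcal{A}_0$ forces the diagonal corners of the central element $d(a)$ to vanish; the symmetric argument disposes of $f\mathcal{A}_0f$. Hence $d=0$ on every Peirce component, and since $f\mathcal{A}_0e=\{0\}$ we conclude $d=0$. In particular $\mathcal{A}_0$ carries no nonzero central inner derivation, so (\ref{S.34}) holds for $A$.

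With hypothesis (v) of Corollary~\ref{A_5} now verified, that corollary gives at once that every Lie $n$-derivation on $\mathrm{alg}\mathcal{N}$ is standard. The only delicate point in the whole plan is the centre bookkeeping in the computation above: one must track exactly which Peirce components of $\mathcal{A}_0$ lie in $\mathcal{Z}(\mathcal{A}_0)$, so that an element which is simultaneously central and purely off-diagonal can be recognized as zero. This is precisely the place where one invokes the triangularity of the corner $\mathcal{A}_0$ rather than that of the original algebra $\mathcal{A}$.
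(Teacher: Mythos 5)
Your proposal is correct and follows essentially the same route as the paper: realize $\mathrm{alg}\mathcal{N}$ as a triangular algebra at a projection $E$ from the nest, verify the centre equalities from $\mathcal{Z}(\mathrm{alg}\mathcal{N})=\mathbb{C}I$, and establish condition (v) of Corollary~\ref{A_5} by showing the corner $E(\mathrm{alg}\mathcal{N})E$ (itself a nest algebra, hence triangular) admits no nonzero central derivation via exactly the Peirce-component computation you describe. The paper's argument is the same word for word, so there is nothing further to compare.
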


\subsection{\bf Unital prime algebras with nontrivial idempotents}
Let $\mathcal{A}$ be a unital prime algebra with a nontrivial idempotent $e$. If $exe\cdot e\mathcal{A}f=\{0\}$, i.e., $(exe)\mathcal{A}f=\{0\}$, then $exe=0$. And if $e\mathcal{A}f\cdot fxf=\{0\}$, i.e., $e\mathcal{A}(fxf)=\{0\}$, then $fxf=0$. Thus, $\mathcal{A}$ satisfies (\ref{1.6}). According to \cite[Corollary 4.5]{Benkovic-Sirovnik}, we obtain that every Jordan derivation of $\mathcal{A}$ is a derivation. If $e\mathcal{A}e$ is commutative, then $e\mathcal{A}e$ obviously satisfies (\ref{S.34}). Or if $e\mathcal{A}e$ is noncommutative, then $e\mathcal{A}e$ satisfies (\ref{S.34}) by \cite[Theorem 2]{Posner}, and it's not difficult to prove that $e\mathcal{A}e$ has no nonzero central ideal. According to Corollary \ref{S_13}, we have the following corollary.

\begin{corollary}\label{A_9}
Let $\mathcal{A}$ be a unital $2$- and $(n-1)$-torsion free prime algebra with a nontrivial idempotent $e$. If one of the following statements holds:
\begin{description}
\item[(i-1)] $e\mathcal{A}e=\mathcal{S}(e\mathcal{A}e)$ and $f\mathcal{A}f=\mathcal{S}(f\mathcal{A}f)$,
\item[(i-2)] $e\mathcal{A}e=\mathcal{S}(e\mathcal{A}e)$ and $\mathcal{Z}(e\mathcal{A}e)=e\mathcal{Z}(\mathcal{A})e$,
\item[(i-3)] $f\mathcal{A}f=\mathcal{S}(f\mathcal{A}f)$ and $\mathcal{Z}(f\mathcal{A}f)=f\mathcal{Z}(\mathcal{A})f$,
\item[(i-4)] $\mathcal{Z}(e\mathcal{A}e)=e\mathcal{Z}(\mathcal{A})e$ and $\mathcal{Z}(f\mathcal{A}f)=f\mathcal{Z}(\mathcal{A})f$,
\end{description}
and if one of the following statements also holds:
\begin{description}
\item[(ii-1)] $e\mathcal{A}e$ or $f\mathcal{A}f$ is noncommutative,
\item[(ii-2)] $e\mathcal{A}e$ or $f\mathcal{A}f$ has no nonzero central ideal,
\item[(ii-3)] $\mathcal{Z}(\mathcal{A})=\{\begin{array}{l|l}
                            a+b & a\in e\mathcal{Z}(\mathcal{A})e,~b\in f\mathcal{Z}(\mathcal{A})f,~
                                    am_0=m_0b
                            \end{array}\}$ for some $m_0\in e\mathcal{A}f$,
\item[(ii-4)] $\mathcal{Z}(\mathcal{A})=\{\begin{array}{l|l}
                            a+b & a\in e\mathcal{Z}(\mathcal{A})e,~b\in f\mathcal{Z}(\mathcal{A})f,~
                                    t_0a=bt_0
                            \end{array}\}$ for some $t_0\in f\mathcal{A}e$,
\item[(ii-5)] $\mathcal{A}$ satisfies (\ref{S.22-2}),
\end{description}
then every Lie $n$-derivation on $\mathcal{A}$ is standard.
\end{corollary}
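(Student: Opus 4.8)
The plan is to derive the statement as a direct application of Corollary \ref{S_13}, whose abstract hypotheses I will match against the concrete prime-algebra conditions listed here, and then to remove the singular Jordan summand by exploiting that over a prime algebra every Jordan derivation is already a derivation. Note that the ``Furthermore'' clause of Corollary \ref{S_13} requires either $n$ even or the vanishing of $f\varphi(e\mathcal{A}f)e$ and $e\varphi(f\mathcal{A}e)f$; since $n$ is not assumed even here, the whole point of the proof is to supply the latter condition intrinsically from primeness.

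First I would assemble the structural facts about the corners that the paragraph preceding the statement collects. Since $\mathcal{A}$ is prime it satisfies the property (\ref{1.6}), and a short Peirce computation shows that $e\mathcal{A}e$ and $f\mathcal{A}f$ are themselves prime (from $exe\cdot\mathcal{A}\cdot eye=exe\cdot e\mathcal{A}e\cdot eye$, the off-diagonal Peirce terms being killed by $fe=0$). Consequently each corner automatically satisfies (\ref{S.34}): trivially when it is commutative, and by Posner's theorem when it is noncommutative. Moreover a noncommutative prime corner carries no nonzero central ideal, for a nonzero central ideal forces $z[r,s]=0$ for some $0\neq z$ and all $r,s$, whence $zR[r,s]=0$ and by primeness $[r,s]=0$, i.e.\ commutativity. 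With these observations the hypotheses translate directly onto those of Corollary \ref{S_13}: conditions (i-1)--(i-4) become its (i-2),(i-3),(i-4),(i-5), the missing ``satisfies (\ref{S.34})'' clause needed to pass from (i-4) to (i-5) being automatic by primeness of the corners; among the second family, (ii-1) together with primeness yields ``no nonzero central ideal'' and hence its (ii-2), while (ii-2)--(ii-5) coincide verbatim with its (ii-2)--(ii-5).

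Having verified the hypotheses, I would invoke Corollary \ref{S_13} to write $\varphi=d+\delta+\gamma$, with $d$ a derivation, $\delta$ a singular Jordan derivation, and $\gamma$ a map into $\mathcal{Z}(\mathcal{A})$ vanishing on all $(n-1)$-th commutators; it remains only to show $\delta=0$. Since, as recalled above, every Jordan derivation of $\mathcal{A}$ is a derivation, $\delta$ is in fact a derivation, while still obeying the defining relations (\ref{S.1}) of a singular Jordan derivation; in particular $\delta(e)=\delta(f)=0$. For $m\in e\mathcal{A}f$ the Leibniz rule then gives $\delta(m)=\delta(emf)=e\delta(m)f\in e\mathcal{A}f$, whereas $\delta(m)\in f\mathcal{A}e$ by (\ref{S.1}); as $e\mathcal{A}f\cap f\mathcal{A}e=\{0\}$ we obtain $\delta(m)=0$, and symmetrically $\delta(t)=0$ for $t\in f\mathcal{A}e$. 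Combined with $\delta(e\mathcal{A}e)=\delta(f\mathcal{A}f)=\{0\}$, this forces $\delta=0$ on every Peirce component, so $\varphi=d+\gamma$ is standard. This argument bypasses the antiderivation route of Corollary \ref{S_13} entirely and is independent of the parity of $n$.

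I expect the only genuinely delicate part to be the bookkeeping of the second paragraph: confirming that each corner of a prime algebra is prime and that a noncommutative prime corner has no nonzero central ideal, so that the concrete conditions (i-1)--(ii-5) line up exactly with the abstract ones of Corollary \ref{S_13}. The conceptual crux, by contrast, is the clean observation that a map which is simultaneously a derivation and a singular Jordan derivation must vanish; everything downstream of that is routine.
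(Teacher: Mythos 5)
Your proposal is correct and follows essentially the same route as the paper: verify property (\ref{1.6}) and the hypotheses of Corollary \ref{S_13} from primeness of $\mathcal{A}$ and of its Peirce corners (using Posner's theorem for (\ref{S.34}) and the no-central-ideal argument), then eliminate the singular Jordan summand via the fact that every Jordan derivation of a $2$-torsion free prime algebra with a nontrivial idempotent is a derivation. Your only (harmless) refinement is to push that last step further and conclude $\delta=0$ outright from $\delta(e\mathcal{A}f)\subseteq e\mathcal{A}f\cap f\mathcal{A}e=\{0\}$, whereas the paper is content to absorb $\delta$ into the derivation $d$.
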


Let $\mathcal{A}=B(X)$ be an algebra of all bounded linear operators, where $X$ is a Banach space over the complex field $\mathbb{C}$ and $\mathrm{dim}X\geq 2$. It's obvious that $B(X)$ is a unital prime algebra with a nontrivial idempotent $e$. Since the center $\mathcal{Z}(B(X))=\mathbb{C}I$, we have that $\mathcal{Z}(eB(X)e)=e\mathcal{Z}(B(X))e$ and $\mathcal{Z}(fB(X)f)=f\mathcal{Z}(B(X))f$. If $eB(X)e$ is commutative and $eB(X)f=\{0\}$, then $B(X)$ satisfies (\ref{S.22-2}). Or if $eB(X)e$ is commutative and $eB(X)f\neq \{0\}$, then we can choose an arbitrary nonzero element $m_0$ in $eB(X)f$. For arbitrary elements $\lambda\cdot eIe$ in $\mathcal{Z}(eB(X)e)$ and $\mu\cdot fIf$ in $\mathcal{Z}(fB(X)f)$ satisfying $(\lambda\cdot eIe)m_0=m_0(\mu\cdot fIf)$, since $(\lambda\cdot eIe)m_0=\lambda m_0$ and $m_0(\mu\cdot fIf)=\mu m_0$, we have that $\lambda=\mu$ and $\lambda\cdot eIe+\mu\cdot fIf=\lambda I\in\mathcal{Z}(B(X))$, which follows that (ii-3) in Corollary \ref{A_9} holds. According to Corollary \ref{A_9}, we have the following corollary which improves \cite[Theorem 1.1]{Lu-Liu}.

\begin{corollary}\label{A_10}
Let $\mathcal{A}=B(X)$ be an algebra of all bounded linear operators, where $X$ is a Banach space over the complex field $\mathbb{C}$ and $\mathrm{dim}X\geq 2$. Then every Lie $n$-derivation on $B(X)$ is standard.
\end{corollary}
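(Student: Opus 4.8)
The plan is to exhibit $B(X)$ as an instance of the prime-algebra setting of Corollary~\ref{A_9} and then merely verify that its hypotheses are met; no new argument is really needed beyond bookkeeping. First I would record the structural facts. Since $X$ is a complex Banach space with $\mathrm{dim}X\geq 2$, it admits a bounded projection $e$ onto a proper nonzero (say one-dimensional, hence automatically complemented) closed subspace, so $B(X)$ is a unital algebra with a nontrivial idempotent $e$; write $f=1-e$ and $X=eX\oplus fX$. As $B(X)$ is prime, the discussion preceding Corollary~\ref{A_9} already gives property~(\ref{1.6}). Moreover $B(X)$ is an algebra over $\mathbb{C}$, so multiplication by any nonzero integer is injective and $B(X)$ is automatically $2$- and $(n-1)$-torsion free; thus the torsion hypotheses cost nothing.

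Next I would check the center-type hypothesis. The key input is the classical fact $\mathcal{Z}(B(X))=\mathbb{C}I$. Identifying $eB(X)e\cong B(eX)$ and $fB(X)f\cong B(fX)$, each corner is again a full operator algebra on a nonzero complex Banach space, so $\mathcal{Z}(eB(X)e)=\mathbb{C}\,e$ and $\mathcal{Z}(fB(X)f)=\mathbb{C}\,f$. Since $e\mathcal{Z}(B(X))e=\mathbb{C}\,e$ and $f\mathcal{Z}(B(X))f=\mathbb{C}\,f$, this yields $\mathcal{Z}(eB(X)e)=e\mathcal{Z}(B(X))e$ and $\mathcal{Z}(fB(X)f)=f\mathcal{Z}(B(X))f$, which is precisely hypothesis (i-4) of Corollary~\ref{A_9}.

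It then remains to secure one of the (ii)-type hypotheses, and here I would split on $\mathrm{dim}(eX)$. If $\mathrm{dim}(eX)\geq 2$, then $eB(X)e\cong B(eX)$ is noncommutative and hypothesis (ii-1) holds at once. If $\mathrm{dim}(eX)=1$, then $eB(X)e$ is commutative and I would instead produce a witness for (ii-3). Since $e$ is nontrivial, both $eX$ and $fX$ are nonzero, so $eB(X)f\cong B(fX,eX)$ contains a nonzero rank-one operator $m_0$. For $\lambda e\in e\mathcal{Z}(B(X))e$ and $\mu f\in f\mathcal{Z}(B(X))f$ with $(\lambda e)m_0=m_0(\mu f)$ one has $\lambda m_0=\mu m_0$, forcing $\lambda=\mu$ and hence $\lambda e+\mu f=\lambda I\in\mathcal{Z}(B(X))$; conversely every $\lambda I$ lies in this set, so the description of $\mathcal{Z}(B(X))$ demanded by (ii-3) holds. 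In either case Corollary~\ref{A_9} applies and gives that every Lie $n$-derivation on $B(X)$ is standard.

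I do not anticipate a genuine obstacle, since every step is a verification against the machinery already assembled. The only point requiring mild care is the commutative case $\mathrm{dim}(eX)=1$: there one must exhibit the nonzero off-diagonal intertwiner $m_0$ and rule out a mismatch between the two scalars, which is exactly where the faithfulness of the corner $eB(X)f$ (a consequence of primeness) is used.
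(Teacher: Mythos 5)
Your proposal is correct and follows essentially the same route as the paper: invoke Corollary \ref{A_9} for the prime algebra $B(X)$, verify hypothesis (i-4) from $\mathcal{Z}(B(X))=\mathbb{C}I$ (you make the corner identification $eB(X)e\cong B(eX)$ explicit, which is a slight sharpening), and then secure a (ii)-hypothesis by taking (ii-1) in the noncommutative-corner case and exhibiting a nonzero $m_0\in eB(X)f$ to verify (ii-3) in the commutative-corner case, exactly as the paper does. No gap.
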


Let $\mathcal{A}$ be a factor von Neumann algebra acting on a Hilbert space $\mathcal{H}$ with $\mathrm{dim}(\mathcal{A})\geq 2$. It's obvious that $\mathcal{A}$ is a unital prime algebra with nontrivial idempotents $p_1$, $p_2$. Let $\mathcal{A}_{ij}=p_i\mathcal{A}p_j$ where $1\leq i,j\leq 2$. Since the center $\mathcal{Z}(\mathcal{A})=\mathbb{C}I$, we have that $\mathcal{Z}(\mathcal{A}_{11})=p_1\mathcal{Z}(\mathcal{A})p_1$ and $\mathcal{Z}(\mathcal{A}_{22})=p_2\mathcal{Z}(\mathcal{A})p_2$. If $\mathcal{A}_{11}$ is commutative and $\mathcal{A}_{12}=\{0\}$, then $\mathcal{A}$ satisfies (\ref{S.22-2}). Or if $\mathcal{A}_{11}$ is commutative and $\mathcal{A}_{12}\neq \{0\}$, then we can choose an arbitrary nonzero element $m_0$ in $\mathcal{A}_{12}$. For arbitrary elements $\lambda\cdot p_1Ip_1$ in $\mathcal{Z}(\mathcal{A}_{11})$ and $\mu\cdot p_2Ip_2$ in $\mathcal{Z}(\mathcal{A}_{22})$ satisfying $(\lambda\cdot p_1Ip_1)m_0=m_0(\mu\cdot p_2Ip_2)$, since $(\lambda\cdot p_1Ip_1)m_0=\lambda m_0$ and $m_0(\mu\cdot p_2Ip_2)=\mu m_0$, we have that $\lambda=\mu$ and $\lambda\cdot p_1Ip_1+\mu\cdot p_2Ip_2=\lambda I\in\mathcal{Z}(\mathcal{A})$, which follows that (ii-3) in Corollary \ref{A_9} holds. According to Corollary \ref{A_9}, we have the following corollary.

\begin{corollary}\label{A_11}
Let $\mathcal{A}$ be a factor von Neumann algebra acting on a Hilbert space $\mathcal{H}$ with $\mathrm{dim}(\mathcal{A})\geq 2$. Then every Lie $n$-derivation on $\mathcal{A}$ is standard.
\end{corollary}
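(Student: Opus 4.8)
The plan is to realize the statement as an instance of Corollary \ref{A_9}, so the whole task reduces to verifying that a factor von Neumann algebra $\mathcal{A}$ meets the standing hypotheses of that corollary together with one condition from each of its two lists. First I would record the structural facts. Since $\mathcal{A}$ is a factor it is prime, and because $\dim(\mathcal{A})\geq 2$ it contains a nontrivial projection $p_1$; setting $p_2=I-p_1$ gives a pair of nontrivial idempotents, so I take $e=p_1$ and $f=p_2$ in the notation of Section \ref{S}. As $\mathcal{A}$ is an algebra over $\mathbb{C}$ it is torsion-free, hence $2$- and $(n-1)$-torsion free, and primeness already guarantees property (\ref{1.6}) by the discussion opening the subsection on prime algebras. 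Thus $\mathcal{A}$ is a unital $2$- and $(n-1)$-torsion free prime algebra with a nontrivial idempotent, which is precisely the standing assumption of Corollary \ref{A_9}.

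Next I would establish the center conditions that constitute hypothesis (i-4). Because $\mathcal{A}$ is a factor, $\mathcal{Z}(\mathcal{A})=\mathbb{C}I$. For each projection $p_i$ the corner $\mathcal{A}_{ii}=p_i\mathcal{A}p_i$ is again a factor (the reduction of a factor is a factor), so $\mathcal{Z}(\mathcal{A}_{ii})=\mathbb{C}p_i=p_i(\mathbb{C}I)p_i=p_i\mathcal{Z}(\mathcal{A})p_i$. In particular $\mathcal{Z}(\mathcal{A}_{11})=p_1\mathcal{Z}(\mathcal{A})p_1$ and $\mathcal{Z}(\mathcal{A}_{22})=p_2\mathcal{Z}(\mathcal{A})p_2$, which is exactly condition (i-4) of Corollary \ref{A_9}.

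It remains to exhibit one of the conditions (ii-1)--(ii-5), and here I would argue by cases on the corner $\mathcal{A}_{11}$. If $\mathcal{A}_{11}$ (or symmetrically $\mathcal{A}_{22}$) is noncommutative, then condition (ii-1) holds outright. Otherwise $\mathcal{A}_{11}$ is commutative, and I split on $\mathcal{A}_{12}$. If $\mathcal{A}_{12}=\{0\}$ then $e\mathcal{A}f=\{0\}$, so both products in (\ref{S.22-2}) vanish and condition (ii-5) holds. If $\mathcal{A}_{12}\neq\{0\}$, I would fix a nonzero $m_0\in\mathcal{A}_{12}$ and verify the center description (ii-3): given $\lambda\,p_1Ip_1\in\mathcal{Z}(\mathcal{A}_{11})$ and $\mu\,p_2Ip_2\in\mathcal{Z}(\mathcal{A}_{22})$ with $(\lambda\,p_1Ip_1)m_0=m_0(\mu\,p_2Ip_2)$, the two sides equal $\lambda m_0$ and $\mu m_0$ respectively, so $m_0\neq 0$ forces $\lambda=\mu$ and hence $\lambda\,p_1Ip_1+\mu\,p_2Ip_2=\lambda I\in\mathcal{Z}(\mathcal{A})$; this is the content of (ii-3). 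With hypothesis (i-4) and one of (ii-1), (ii-3), (ii-5) confirmed in every case, Corollary \ref{A_9} yields that every Lie $n$-derivation on $\mathcal{A}$ is standard.

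The routine parts are the von Neumann algebra facts (primeness of a factor, that a corner of a factor is a factor, and the triviality of the center). The only genuinely delicate point is the case analysis ensuring that one of the second-list conditions always holds: the commutative-corner branch must be handled separately according to whether the off-diagonal corner $\mathcal{A}_{12}$ vanishes, and in the nonvanishing subcase one must check (ii-3) by hand. I expect this bookkeeping, rather than any hard estimate, to be the main thing to get right.
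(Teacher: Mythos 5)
Your proposal is correct and follows essentially the same route as the paper: reduce to Corollary \ref{A_9} by verifying (i-4) via $\mathcal{Z}(\mathcal{A})=\mathbb{C}I$, then split on whether a corner is noncommutative (giving (ii-1)), or commutative with $\mathcal{A}_{12}=\{0\}$ (giving (ii-5)), or commutative with a nonzero $m_0\in\mathcal{A}_{12}$ (giving (ii-3) by the $\lambda=\mu$ computation). Your write-up is in fact slightly more explicit than the paper's, which leaves the noncommutative branch implicit and asserts the corner-center identity without invoking the reduced-factor fact.
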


\subsection{\bf Von Neumann algebras without central summand of type $I_1$}
Let $\mathcal{A}$ be a von Neumann algebra with no central summand of type $I_1$. By \cite[Lemmas 4]{Miers-Lie_homo} and \cite[Lemma 1]{Miers-Lie_tri_der}, we know that $\mathcal{A}$ is a unital algebra with a nontrivial idempotent $p$ satisfying (\ref{1.6}) and (iii) in Corollary \ref{S_15}. Denote that $q=I-p$. Let $\mathcal{A}_{11}=p\mathcal{A}p$, $\mathcal{A}_{12}=p\mathcal{A}q$, $\mathcal{A}_{21}=q\mathcal{A}p$ and $\mathcal{A}_{22}=q\mathcal{A}q$. By \cite[Lemma 5]{Miers-Lie_homo}, we have that $\mathcal{Z}(\mathcal{A}_{11})=p\mathcal{Z}(\mathcal{A})p$ and $\mathcal{Z}(\mathcal{A}_{22})=q\mathcal{Z}(\mathcal{A})q$. Let $d_0$ be an arbitrary central inner derivation of $\mathcal{A}_{11}$, i.e., there exists an element $a'_{11}$ in $\mathcal{A}_{11}$ such that $d_0(a_{11})=[a'_{11},a_{11}]\in\mathcal{Z}(\mathcal{A}_{11})$ for each $a_{11}$ in $\mathcal{A}_{11}$. By the Kleinecke-Shirokov theorem \cite[Lemma 2.2]{Fosner-Wei-Xiao}, we have $d_0(a_{11})^2=0$ for each $a_{11}$ in $\mathcal{A}_{11}$. It follows form \cite[Lemma 1]{Miers-Lie_tri_der} that $d_0=0$. Thus, (i-4) in Corollary \ref{S_15} holds. Let $\mathcal{I}$ be the central ideal of $\mathcal{A}_{11}$. For each $a_{11}$ in $\mathcal{I}$, since $a_{11}\mathcal{A}_{11}\subseteq\mathcal{I}\subseteq\mathcal{Z}(\mathcal{A}_{11})$ and \cite[Lemma 5]{Bresar-Miers}, we have $a_{11}=0$. Thus, (ii-1) in Corollary \ref{S_15} holds. According to Corollary \ref{S_15}, we have the following corollary which is also proved in \cite[Theorem 2.3]{Fosner-Wei-Xiao}.

\begin{corollary}\label{A_12}
%Let $\mathcal{A}$ be a $2$- and $(n-1)$-torsion free von Neumann algebra with no central summand of type $I_1$. Then every Lie $n$-derivation on $\mathcal{A}$ is standard.
Let $\mathcal{A}$ be a von Neumann algebra with no central summand of type $I_1$. Then every Lie $n$-derivation on $\mathcal{A}$ is standard.
\end{corollary}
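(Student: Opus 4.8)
The plan is to deduce the statement directly from Corollary \ref{S_15} by checking each of its hypotheses for a von Neumann algebra $\mathcal{A}$ with no central summand of type $I_1$. Since $\mathcal{A}$ is an algebra over $\mathbb{C}$, it is automatically $2$- and $(n-1)$-torsion free, so the torsion conditions come for free. The first structural input comes from Miers: by \cite[Lemmas 4]{Miers-Lie_homo} and \cite[Lemma 1]{Miers-Lie_tri_der}, the absence of a type $I_1$ central summand furnishes a nontrivial projection $p$ for which the Peirce decomposition satisfies the property (\ref{1.6}) as well as condition (iii) of Corollary \ref{S_15}. I would fix this $p$, set $q=I-p$, and write $\mathcal{A}_{11}=p\mathcal{A}p$ and $\mathcal{A}_{22}=q\mathcal{A}q$ for the diagonal corners.

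Next I would record the center computation $\mathcal{Z}(\mathcal{A}_{11})=p\mathcal{Z}(\mathcal{A})p$ and $\mathcal{Z}(\mathcal{A}_{22})=q\mathcal{Z}(\mathcal{A})q$, which is exactly \cite[Lemma 5]{Miers-Lie_homo}; these are the two center equalities appearing in hypothesis (i-4) of Corollary \ref{S_15}. It then remains to verify the remaining half of (i-4), namely that $\mathcal{A}_{11}$ (say) satisfies (\ref{S.34}), equivalently that $\mathcal{A}_{11}$ admits no nonzero central inner derivation. The plan here is to take an arbitrary inner derivation $d_0(x)=[a',x]$ of $\mathcal{A}_{11}$ whose range lies in $\mathcal{Z}(\mathcal{A}_{11})$, apply the Kleinecke--Shirokov theorem in the form \cite[Lemma 2.2]{Fosner-Wei-Xiao} to conclude that $d_0(x)^2=0$ for every $x$, and then invoke \cite[Lemma 1]{Miers-Lie_tri_der} to force $d_0=0$. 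This secures hypothesis (i-4).

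Finally, for hypothesis (ii-1) I would show that $\mathcal{A}_{11}$ has no nonzero central ideal: if $\mathcal{I}$ is such an ideal and $a\in\mathcal{I}$, then $a\mathcal{A}_{11}\subseteq\mathcal{I}\subseteq\mathcal{Z}(\mathcal{A}_{11})$, and \cite[Lemma 5]{Bresar-Miers} forces $a=0$. With (\ref{1.6}), torsion-freeness, (i-4), (ii-1) and (iii) all in hand, Corollary \ref{S_15} immediately yields that every Lie $n$-derivation on $\mathcal{A}$ is standard. The genuine content of the argument---and the only place where the hypothesis \emph{no central summand of type $I_1$} is essentially used---is the verification of (\ref{S.34}) via Kleinecke--Shirokov together with Miers' structure theory; every other step is a direct citation of a structural lemma or elementary bookkeeping.
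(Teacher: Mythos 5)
Your proposal is correct and follows essentially the same route as the paper: both obtain the projection $p$ and conditions (\ref{1.6}) and (iii) from Miers' lemmas, verify the center equalities of (i-4) via \cite[Lemma 5]{Miers-Lie_homo}, establish (\ref{S.34}) by combining the Kleinecke--Shirokov theorem with \cite[Lemma 1]{Miers-Lie_tri_der}, rule out nonzero central ideals via \cite[Lemma 5]{Bresar-Miers}, and then invoke Corollary \ref{S_15}. No discrepancies to report.
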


%\section*{Acknowledge}
%The authors would like to thank the referee for their careful reading of this paper. This work is partially supported by National Natural Science Foundation of China {\color{red}{[Grant No. 11371136?]}}.\\

%\section*{References}

\end{document}